\documentclass[a4paper]{article}
\usepackage{fullpage}
\usepackage{tikz}

\usepackage[pdftex,breaklinks,colorlinks,
linkcolor=blue,
citecolor=blue,
urlcolor=blue]{hyperref}
\newcommand{\pr}[1]{{\color{black}#1}}

\usepackage{enumitem}
\usepackage{amsmath,amssymb,amsfonts,amsthm}

\newtheorem{theorem}{Theorem}
\newtheorem{conjecture}[theorem]{Conjecture}
\newtheorem{lemma}[theorem]{Lemma}
\newtheorem{proposition}[theorem]{Proposition}
\newtheorem{corollary}[theorem]{Corollary}

\theoremstyle{definition}
\newtheorem{definition}[theorem]{Definition}
\newtheorem{remark}[theorem]{Remark}

\author{Pablo Romero\footnote{Facultad de Ingenier\'ia, Universidad de la Rep\'ublica, Montevideo, Uruguay. E-mail address: \texttt{promero@fing.edu.uy}.}\footnote{Facultad de Ciencias Exactas y Naturales, Universidad de Buenos Aires. E-mail address: \texttt{promero@ic.fcen.uba.ar}.}\qquad Louis Petingi\footnote{College of Staten Island, City University of New York (CUNY). E-mail address: \texttt{louis.petingi@csi.cuny.edu}.}}

\date{}

\makeatletter%
\begin{document}

\title{Trace-minimal graphs maximizing the number of spanning trees\thanks{A preliminary version of this work appeared in the proceedings of the XIII Latin-American Algorithms, Graphs and Optimization Symposium~\cite{RomeroLAGOS1-2025}.}}

\maketitle

\begin{abstract}\let\thefootnote\relax
A classical problem in spectral graph theory is to determine, among all simple graphs with a fixed number of vertices and edges, those that maximize the number of spanning trees. Such graphs are called $t$-optimal. Petingi and Rodríguez [Discr. Math. 244 (2002), 351--373]  reduced the construction of infinitely many $t$-optimal graphs to the determination of almost-regular graphs with a minimum number of induced $3$-paths, known as almost-regular $\nu$-min graphs. They conjectured that the class of $d$-regular $\nu$-min graphs is invariant under the addition or deletion of copies of the complete graph $K_{d+1}$, which would provide a method to construct large families of $t$-optimal graphs. This conjecture was recently verified for $d\in \{1,2,3,4,5\}$ by Gong \textit{et~al.} [Discr. Math. 349(4), 2026].

In this work, we establish an asymptotic version of this conjecture. We prove that the class of $d$-regular $\nu$-min graphs on at least $n$ vertices is invariant under the addition or deletion of copies of $K_{d+1}$ whenever $d\geq 3$ and $n\geq d^3$. In addition, we introduce a new method to construct $t$-optimal graphs based on the concept of trace-minimal graphs, originally introduced by Ábrego \textit{et~al.} [Linear Algebra Appl. 412 (2006), 161--221]. As a consequence, we obtain infinitely many graphs that are both trace-minimal and $t$-optimal.
\end{abstract}

\renewcommand{\labelitemi}{--}

\section{Introduction} \label{intro}

In the first half of the nineteenth century, Kirchhoff, in his study of linear resistive circuits, proved that the number of spanning trees of a graph equals any cofactor of its Laplacian matrix~\cite{Biggs-1993}. As a consequence, the number of spanning trees of a graph, sometimes called the \emph{tree-number}, can be computed efficiently. This raises a natural extremal question: given integers $n$ and $m$, how can one construct simple graphs on $n$ vertices and $m$ edges that maximize the tree-number? Graphs attaining this maximum are called \emph{$t$-optimal graphs}, and they play a role in network reliability analysis~\cite{Boesch-1986,RomeroSafe-2026,Romero-2026}.

Several classes of $t$-optimal graphs are known. Kelmans and Chelnokov~\cite{Kelmans-1974} proved that the complete graph $K_n$ minus a matching is $t$-optimal, and Cheng~\cite{Cheng-1981} showed that every regular complete multipartite graph is $t$-optimal. Boesch~\cite{Boesch-1986} conjectured that every $t$-optimal graph is almost regular, a statement later confirmed asymptotically by Petingi and Rodríguez~\cite{Petingi-2002}. They generalized Cheng’s result by proving that every almost-regular complete multipartite graph is $t$-optimal. 
Additionally, they provided a method 
to construct $t$-optimal graphs 
by means of the determination of 
almost-regular graphs that minimize the number of induced $3$-paths, known as almost-regular $\nu$-min graphs. Petingi \textit{et~al.}~\cite{Petingi-1998} and later Petingi and Rodríguez~\cite{Petingi-2002} exhibited infinitely many $t$-optimal graphs on $n$ vertices and $m$ edges for the ranges $\binom{n}{2}-n \leq m \leq \binom{n}{2}-n/2$ and $\binom{n}{2}-3n/2 \leq m \leq \binom{n}{2}-n$, respectively.

A central open question arising from~\cite{Petingi-2002} concerns the structure of $d$-regular $\nu$-min graphs. Petingi and Rodríguez conjectured that the class of all $d$-regular $\nu$-min graphs is invariant under the addition or deletion of copies of the complete graph $K_{d+1}$. If true, this would yield a recursive mechanism to construct infinitely many regular $\nu$-min graphs, and consequently, large families of $t$-optimal graphs. This conjecture has recently been verified for $d\in \{1,2,3,4,5\}$ by Gong \textit{et~al.}~\cite{Gong-2026}, who also obtained new infinite families of $t$-optimal graphs whose complements are either $4$-regular or $5$-regular. Additional constructions have been obtained for graph classes of small corank. In this direction, Kahl~\cite{Kahl-2022} introduced the concept of Tutte-maximum graphs, which are also $t$-optimal, and Kahl and Luttrell~\cite{Kahl-2023} determined all such graphs with corank at most $3$.

A key ingredient for our approach comes from a different line of research. Ábrego \textit{et~al.}~\cite{Abrego-2006}, in the context of $D$-optimal weighing designs, introduced the notion of \emph{trace-minimal graphs}. These graphs are defined through a lexicographic minimization of trace sequences of powers of the adjacency matrix, and were used to construct $D$-optimal weighing designs (i.e., how to estimate the weight of some objects with minimum uncertainty). As we show in this work, this concept is also well suited to the study of extremal problems for the number of spanning trees.\\

The purpose of this article is twofold. First, we establish an asymptotic version of the conjecture of Petingi and Rodríguez. Specifically, we prove that the class of $d$-regular $\nu$-min graphs on at least $n$ vertices is invariant under the addition or deletion of copies of $K_{d+1}$ whenever $d\geq 3$ and $n\geq d^3$ (see Proposition~\ref{proposition:conj}). 

Second, we introduce a new method to construct $t$-optimal graphs based on trace-minimal graphs. By relating trace-minimality to a Laplacian-based lexicographic minimization, we obtain a framework that refines the method of Petingi and Rodríguez~\cite{Petingi-2002}. As a consequence, we prove the existence of infinitely many graphs that are simultaneously trace-minimal and $t$-optimal (see Theorem~\ref{theorem:summary}), yielding infinitely many extremal graphs.\\

The article is organized as follows. Section~\ref{section:concepts} presents basic concepts in graph theory. Section~\ref{section:background} reviews results from spectral graph theory and summarizes key ideas from~\cite{Petingi-2002,Abrego-2006}, together with auxiliary results on trace-minimal graphs. The main results are given in Section~\ref{section:main}, where we prove the asymptotic version of the conjecture (Proposition~\ref{proposition:conj}) and develop the new construction method based on trace-minimality (Theorem~\ref{theorem:summary}). Finally, in Section~\ref{section:toptimal}, we construct infinitely many $6$-regular and $7$-regular graphs whose complements are both trace-minimal and $t$-optimal.

\section{Concepts}\label{section:concepts}

Let $\mathcal{S}_{n,m}$ be the class of simple graphs on $n$ vertices and $m$ edges, and let $G\in \mathcal{S}_{n,m}$. We denote its vertex and edge sets by $V(G)$ and $E(G)$, respectively, and write $V(G)=\{v_1,v_2,\ldots,v_n\}$. Two vertices $v_i$ and $v_j$ are said to be \emph{adjacent} if $v_iv_j \in E(G)$; in this case, the edge $v_iv_j$ is \emph{incident} to both $v_i$ and $v_j$. The \emph{degree} of a vertex $v_i$, denoted by $d_i$, is the number of edges incident to $v_i$. The \emph{degree sequence} of $G$ is $d(G)=(d_1,d_2,\ldots,d_n)$.

We say that $G$ is \emph{$d$-regular} if $d_i=d$ for all $i\in \{1,2,\ldots,n\}$, and we denote by $\mathcal{R}_d(n)$ the class of $d$-regular graphs on $n$ vertices. We say that $G$ is \emph{almost-regular} if the degrees of any two of its vertices differ by at most $1$, and we denote by $\mathcal{A}_{n,m}$ the class of almost-regular graphs in $\mathcal{S}_{n,m}$.

We denote by $I_n$, $0_n$, and $J_n$ the identity matrix, the all-zero matrix, and the all-one matrix of size $n \times n$, respectively. The \emph{trace} of a matrix $M$, denoted $\mathrm{tr}(M)$, is the sum of its diagonal entries. The \emph{adjacency matrix} of $G$ is $A(G)=(a_{i,j})_{1\leq i,j\leq n}$, where $a_{i,j}=1$ if $v_iv_j \in E(G)$ and $a_{i,j}=0$ otherwise. Let $D(G)$ be the diagonal matrix with diagonal entries $d_1,\ldots,d_n$. The \emph{Laplacian matrix} of $G$ is $L(G)=D(G)-A(G)$, and its characteristic polynomial is $P_G(x)=\det(xI_n-L(G))$. The \emph{Laplacian spectrum} of $G$ is the multiset consisting of all roots of $P_G(x)$.

For each $S \subseteq V(G)$, the \emph{subgraph of $G$ induced by $S$}, denoted $G[S]$, is the graph with vertex set $S$ and all edges of $G$ with both endpoints in $S$. Let $t(G)$, $\tau(G)$, and $\nu(G)$ denote the number of spanning trees, induced triangles, and induced $3$-paths of $G$, respectively. We say that $G$ is \emph{$t$-optimal} if $t(G)\geq t(H)$ for all $H\in \mathcal{S}_{n,m}$.

If $G\in \mathcal{A}_{n,m}$, we say that $G$ is \emph{$\nu$-min} if $\nu(G)\leq \nu(H)$ for all $H\in \mathcal{A}_{n,m}$, and \emph{$\tau$-max} if $\tau(G)\geq \tau(H)$ for all $H\in \mathcal{A}_{n,m}$. The \emph{complement} of $G$ is denoted by $\overline{G}$. The union of two disjoint graphs $G$ and $H$ is denoted by $G \cup H$. The \emph{join} of $G$ and $H$, denoted $G \wedge H$, is obtained from $G \cup H$ by adding all edges between $V(G)$ and $V(H)$. We denote by $G^{(n)}$ the join of $n$ disjoint copies of $G$, and by $n(G)$ the number of vertices of $G$.

We denote by $\mathcal{S}(G)$, $\mathcal{A}(G)$, and $\mathcal{R}(G)$ the classes of simple, almost-regular, and regular graphs with the same number of vertices and edges as $G$, respectively. We denote by $P_n$, $C_n$, $K_n$, and $E_n$ the path, cycle, complete graph, and edgeless graph on $n$ vertices, respectively. The complete bipartite graph $K_{r,s}$ is defined as $E_r \wedge E_s$.

\section{Related work}\label{section:background}
In this section we present some elementary results concerning spectral graph theory that will be used throughout this work. Then, we revisit the method developed by Petingi and Rodríguez~\cite{Petingi-2002} to construct $t$-optimal graphs.  
Finally, we include the concept of trace-minimal graphs 
as defined by \'Abrego \textit{et~al.}~\cite{Abrego-2006}, and present 
some results on the existence of trace-minimal graphs which will be useful for our purposes.\\

Lemma~\ref{lemma:basics} can be proved using the definition of the Laplacian matrix of a graph and linear algebra.
\begin{lemma}[Anderson~\cite{Anderson-1985}]\label{lemma:basics}
Let $G$ be any graph in $\mathcal{S}_{n,m}$. Let $\Delta(G)$ be the maximum degree of all vertices in $G$. All the following assertions hold.
\begin{enumerate}
\item\label{prop1} The Laplacian matrix $L(G)$ is both symmetric and positive semidefinite. 
\item\label{prop2} If $\lambda_1,\ldots,\lambda_n$ are the $n$ eigenvalues of $L(G)$ such that  $0=\lambda_1 \leq \lambda_2 \leq \cdots \leq \lambda_n$, then $P_{\overline{G}}(x)=x\prod_{i=2}^{n}(x-(n-\lambda_i))$.
\item\label{prop3} Each eigenvalue of $L(G)$ lies in the interval $[0,n]$.
\item\label{prop4} Each eigenvalue of $L(G)$ lies in the interval $[0,2\Delta(G)]$.
\item\label{prop5} The multiplicity of the eigenvalue $0$ of $L(G)$ equals the number of components of $G$.
\item\label{prop6} If $\overline{G}$ is connected then $n$ is not an eigenvalue of $L(G)$.
\end{enumerate}
\end{lemma}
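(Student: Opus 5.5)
The six items follow from writing the Laplacian as a sum of rank-one edge contributions, together with the identity $L(G)+L(\overline{G})=nI_n-J_n$. I will establish them in the order \ref{prop1}, \ref{prop5}, \ref{prop2}, \ref{prop3}, \ref{prop6}, \ref{prop4}.

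For item \ref{prop1}, symmetry is immediate from $L=D-A$. For each edge $e=v_iv_j$ let $b_e=e_i-e_j$. Then $L(G)=\sum_{e\in E(G)} b_eb_e^{T}$, so $x^TLx=\sum_{v_iv_j\in E}(x_i-x_j)^2\ge 0$, and $L(G)$ is positive semidefinite. For item \ref{prop5}, the same quadratic form vanishes exactly when $x$ is constant on each connected component. Hence the kernel of $L(G)$ is spanned by the indicator vectors of the components, and since $L$ is symmetric, algebraic and geometric multiplicities of $0$ coincide.

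For item \ref{prop2}, observe that $D(G)+D(\overline G)=(n-1)I_n$ and $A(G)+A(\overline G)=J_n-I_n$, so $L(\overline G)=nI_n-J_n-L(G)$. The all-ones vector $\mathbf{1}$ spans an eigenspace of $L(G)$ for $\lambda_1=0$, and it is also in the kernel of $L(\overline G)$. Choose an orthonormal eigenbasis of $L(G)$ containing $\mathbf 1/\sqrt n$; the remaining vectors $u_2,\dots,u_n$ are orthogonal to $\mathbf 1$, so $J_nu_i=0$. Therefore $L(\overline G)u_i=(n-\lambda_i)u_i$, which gives the factorization of $P_{\overline G}$.

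Items \ref{prop3} and \ref{prop6} then follow. Since $L(\overline G)$ is positive semidefinite by item \ref{prop1}, each $n-\lambda_i\ge 0$, so $\lambda_i\le n$; together with $\lambda_i\ge 0$ this proves item \ref{prop3}. If $\overline G$ is connected, item \ref{prop5} says $0$ has multiplicity one for $L(\overline G)$. This zero is already accounted for by the factor $x$, coming from $\mathbf 1$, so $n-\lambda_i\ne 0$ for $i\ge 2$. Also $\lambda_1=0\ne n$ (for $n\ge 1$), which proves item \ref{prop6}.

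The only step needing a small extra idea is item \ref{prop4}. I would bound the spectral radius by the maximum absolute row sum (Gershgorin). Row $i$ of $L$ has diagonal entry $d_i$ and off-diagonal absolute entries summing to $d_i$, so every eigenvalue lies in $\bigcup_i[0,2d_i]\subseteq[0,2\Delta(G)]$. Alternatively, the inequality $(x_i-x_j)^2\le 2x_i^2+2x_j^2$ gives $x^TLx\le 2\sum_i d_ix_i^2\le 2\Delta\|x\|^2$. The other steps are bookkeeping; the one point to be careful about is choosing the eigenbasis orthogonal to $\mathbf 1$ in item \ref{prop2}, which the symmetry of $L(G)$ permits.
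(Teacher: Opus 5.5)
Your proof is correct: the edge decomposition $L(G)=\sum_{e}b_eb_e^{T}$ gives positive semidefiniteness and the kernel description, the identity $L(G)+L(\overline{G})=nI_n-J_n$ with an orthonormal eigenbasis containing $\mathbf{1}/\sqrt{n}$ gives the spectrum of the complement (and hence $\lambda_i\le n$ and the connectivity criterion), and Gershgorin gives the $2\Delta(G)$ bound. The paper gives no proof of its own beyond citing Anderson and remarking that the lemma follows from the definition of the Laplacian and linear algebra, which is exactly the standard route you take.
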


Recall that $E_n$ is the edgeless graph on $n$ vertices. 
By Lemma~\ref{lemma:basics}\ref{prop5}, the multiplicity of the eigenvalue $0$ of $L(E_n)$ equals $n$.   
Applying Lemma~\ref{lemma:basics}\ref{prop2} to 
$E_n$ gives Remark~\ref{remark:complete}.
\begin{remark}\label{remark:complete}
The characteristic polynomial of $L(K_n)$ is 
$P_{K_n}(x)=x(x-n)^{n-1}$.
\end{remark}

Now, let $G$ be any graph in $\mathcal{S}_{n,m}$ such that  
$P_G(x)=x \prod_{i=2}^{n}(x-\lambda_i)$. 
For each $i,j\in \{1,\ldots,n\}$, let $c_{ij}(L(G))$ be the cofactor of $L(G)$ at entry $(i,j)$. By Kirchhoff’s theorem, $t(G)=c_{ij}(L(G))$ for all $i,j\in \{1,\ldots,n\}$. Moreover, the coefficient of $x$ in $P_G(x)$ equals $(-1)^{n-1}\sum_{i=1}^n c_{ii}(L(G))$. Since this coefficient is also $(-1)^{n-1}\prod_{i=2}^n \lambda_i$, we obtain that
\begin{equation*}
t(G)=c_{11}(L(G))=\frac{1}{n}\sum_{i=1}^{n}c_{ii}(L(G))= \frac{1}{n}\prod_{i=2}^{n}\lambda_i = \frac{1}{n^2}(n\prod_{i=2}^{n}\lambda_i) =\frac{1}{n^2}P_{\overline{G}}(n),    
\end{equation*}
where Lemma~\ref{lemma:basics}\ref{prop2} was used in the last step. Therefore, the number of spanning trees of a graph $G$ is determined by its Laplacian spectrum and Lemma~\ref{lemma:t} holds.
\begin{lemma}\label{lemma:t}
If $G$ is a simple graph on $n$ vertices then $t(G)=\frac{1}{n^2}P_{\overline{G}}(n)$.    
\end{lemma}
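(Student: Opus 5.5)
The plan is to reduce $t(G)$ to the nonzero Laplacian eigenvalues of $G$ via Kirchhoff's theorem, and then to rewrite their product through $P_{\overline{G}}$ using Lemma~\ref{lemma:basics}\ref{prop2}. This is essentially the computation displayed just before the statement; the task is to make each equality in that chain rigorous.

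Write $P_G(x)=x\prod_{i=2}^n(x-\lambda_i)$, where $0=\lambda_1\le\lambda_2\le\cdots\le\lambda_n$ are the Laplacian eigenvalues, as guaranteed by Lemma~\ref{lemma:basics}\ref{prop1}.

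\emph{Step 1: all diagonal cofactors equal $t(G)$.} By Kirchhoff's theorem, $c_{ii}(L(G))=t(G)$ for every $i$. Hence $\sum_{i=1}^n c_{ii}(L(G))=n\,t(G)$.

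\emph{Step 2: compare the coefficient of $x$ in $P_G$.} Consider $\det(xI_n-L(G))$. Its linear coefficient is $(-1)^{n-1}$ times the sum of the principal $(n-1)\times(n-1)$ minors of $L(G)$, which is the standard fact that the coefficients of a characteristic polynomial are signed sums of principal minors. Since these minors are exactly the $c_{ii}$, the linear coefficient equals $(-1)^{n-1}n\,t(G)$. On the other hand, expanding the factorization shows that the same coefficient is $(-1)^{n-1}\prod_{i=2}^n\lambda_i$. Equating the two gives $n\,t(G)=\prod_{i=2}^n\lambda_i$.

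\emph{Step 3: evaluate $P_{\overline G}$ at $n$.} By Lemma~\ref{lemma:basics}\ref{prop2}, $P_{\overline G}(x)=x\prod_{i=2}^n(x-(n-\lambda_i))$. Substituting $x=n$ gives
\[
P_{\overline G}(n)=n\prod_{i=2}^n\lambda_i=n^2\,t(G).
\]
Dividing by $n^2$ yields the claim.

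The only point needing care is the principal-minor identity for the linear coefficient in Step 2. It can be justified by multilinearity of the determinant in the columns, or by differentiating $\det(xI-L)$ at $x=0$ via Jacobi's formula. One should also confirm that the disconnected case is consistent. In that case $\lambda_2=0$ by Lemma~\ref{lemma:basics}\ref{prop5}, so both sides vanish, and no separate argument is needed.
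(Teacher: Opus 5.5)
Your proposal is correct and follows the paper's derivation essentially step for step. Like the paper, you use Kirchhoff's theorem, identify the linear coefficient of $P_G$ both as $(-1)^{n-1}\sum_i c_{ii}(L(G))$ and as $(-1)^{n-1}\prod_{i\ge 2}\lambda_i$, and then evaluate Lemma~\ref{lemma:basics}\ref{prop2} at $x=n$; your principal-minor justification and the remark on the disconnected case are sound additions.
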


Let $G$ be any graph in $\mathcal{S}_{n,m}$. Suppose that $\overline{G}$ is connected. In the following, we derive a closed-form expression for the tree-number of $\overline{G}$, i.e.,  $t(\overline{G})$. Let  $P_G(x)=\prod_{i=1}^{n}(x-\lambda_i)$, where $\lambda_1,\lambda_2,\ldots,\lambda_n$ are the eigenvalues of $L(G)$. As $\overline{G}$ is connected,  Lemma~\ref{lemma:basics}\ref{prop6} gives that $n$ is not an eigenvalue of $L(G)$. 
Lemma~\ref{lemma:basics}\ref{prop3} gives that each of the eigenvalues of $L(G)$ lies in the interval $[0,n)$. As a consequence, the function $P_G(x)/x^n$ is positive whenever $x\geq n$. 
Additionally, when $x\geq n$ it follows that $x$ is greater than each of the eigenvalues of $L(G)$ thus 
\begin{align}\label{eq:nodrop}
-\log\left(\frac{P_G(x)}{x^n}\right) &= -\log\left(\prod_{i=1}^{n}(1-\frac{\lambda_i}{x})\right) = \sum_{i=1}^{n}-\log\left(1-\frac{\lambda_i}{x}\right) \nonumber\\
&=\sum_{i=1}^{n}\sum_{k=1}^{\infty}\frac{\lambda_i^k}{kx^k} = \sum_{k=1}^{\infty}\frac{\sum_{i=1}^n \lambda_i^k}{kx^k} = \sum_{k=1}^{\infty}\frac{\mathrm{tr}(L(G)^k)}{kx^k} = \sum_{k=1}^{\infty}\frac{\ell_k(G)}{kx^k},
\end{align}
where $\ell_k(G)=\mathrm{tr}(L(G)^k)$ is the \emph{Laplacian sequence} of $G$, and the power series expansion $-\log(1-\frac{\lambda_i}{x})=\sum_{k=1}^{\infty}\frac{\lambda_i^k}{kx^k}$ holds since $x \geq n > |\lambda_i|$. 
Petingi and Rodríguez found lower bounds for $\ell_k(G)$. 
\begin{lemma}[Petingi and Rodríguez~\cite{Petingi-2002}]\label{lemma:gaps}
If $G$ is a simple graph with degree sequence $(d_1,d_2,\ldots,d_n)$ and $k$ is any positive integer then  
$\ell_k(G) \geq \sum_{i=1}^{n}d_i(d_i+1)^{k-1}$. The equality holds for each positive integer $k$ if and only if $G$ is a disjoint union of complete graphs.
\end{lemma}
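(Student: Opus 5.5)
I will prove the bound by interpreting $\ell_k(G)=\mathrm{tr}(L(G)^k)$ combinatorially and reducing to a comparison of nonnegative walk counts. Write $L=D-A$ and consider the signless Laplacian $Q=D+A$. Since $G$ is simple, it has no odd cycles in the relevant sense only if it is bipartite, so I will not use $L$ and $Q$ interchangeably. Instead, I work directly with the diagonal entries of $L^k$.

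The plan is to show $(L^k)_{ii}\ge d_i(d_i+1)^{k-1}$ for every vertex $v_i$, which is slightly stronger than needed. The key identity is
\[
L^2 = L\cdot L,\qquad (L^2)_{ii}=d_i^2+d_i=d_i(d_i+1).
\]
This already gives the case $k=2$ with equality for every graph. The case $k=1$ is $\ell_1=\sum d_i$, also with equality.

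For general $k$, I avoid vertex-wise bounds, which can fail because $L^k$ has sign cancellations, and use the spectrum. Let $x$ be the unit vector $e_i$. Since $L$ is positive semidefinite (Lemma~\ref{lemma:basics}\ref{prop1}), I apply the spectral measure $\mu_i$ of $L$ at $e_i$, a probability measure on $[0,\infty)$. Its moments satisfy
\[
m_1=\int\lambda\,d\mu_i=d_i,\qquad m_2=\int\lambda^2\,d\mu_i=d_i(d_i+1).
\]
Then $(L^k)_{ii}=m_k$. For $k\ge 2$, I will use the log-convexity of moments of a measure on $[0,\infty)$, that is, the Cauchy--Schwarz/Hölder inequality $m_{j}^2\le m_{j-1}m_{j+1}$. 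This yields $m_{j+1}/m_j\ge m_j/m_{j-1}\ge\cdots\ge m_2/m_1=d_i+1$ whenever $d_i>0$. Hence $m_k\ge m_2(d_i+1)^{k-2}=d_i(d_i+1)^{k-1}$. Isolated vertices contribute $0=0$. Summing over $i$ gives the inequality.

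\textbf{Equality.} If $G$ is a disjoint union of complete graphs, Remark~\ref{remark:complete} shows that the Laplacian eigenvalues of a $K_{d+1}$ component are $0$ and $d+1$ (the latter with multiplicity $d$). Then $\ell_k=d(d+1)^k=(d+1)\,d(d+1)^{k-1}$, matching the bound.

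Conversely, suppose equality holds for all $k$. Since each vertex term satisfies its own inequality, equality holds vertex-wise, in particular at $k=3$. Equality in $m_2^2\le m_1m_3$ forces $\mu_i$ to be supported on $\{0,c\}$ with $c=d_i+1$. Concretely, $e_i$ lies in $\ker L\oplus\ker(L-(d_i+1)I)$.

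I expect translating this spectral condition into structure to be the main obstacle. The cleanest route is to compute $(L^3)_{ii}$ combinatorially:
\[
(L^3)_{ii}=d_i^3+3d_i^2+\sum_{j\sim i}d_j-2t_i,
\]
where $t_i$ is the number of triangles at $v_i$. I would double-check this formula. Equality $(L^3)_{ii}=d_i(d_i+1)^2$ then reads
\[
\sum_{j\sim i}(d_j-d_i)=2t_i-d_i(d_i-1).
\]
Since $t_i\le\binom{d_i}{2}$, the right side is at most $0$. Summing over $i$ within a component, the left sides total $\sum_{ij\in E}\bigl[(d_j-d_i)+(d_i-d_j)\bigr]=0$. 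This forces $t_i=\binom{d_i}{2}$ for all $i$, so every neighborhood is a clique. Every component is then complete, which finishes the proof.
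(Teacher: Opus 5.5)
The paper does not prove this lemma. It is imported from Petingi and Rodríguez, and the only consequences used later are $g_1=g_2=0$ and $g_3=2\nu$. So your argument has to stand on its own, and it does.

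\textbf{The inequality.} The spectral measure $\mu_i$ of $L(G)$ at $e_i$ is a probability measure on $[0,\infty)$ with $m_1=d_i$ and $m_2=d_i(d_i+1)$. Cauchy--Schwarz gives $m_j^2\le m_{j-1}m_{j+1}$, and when $d_i>0$ every $m_j$ with $j\ge1$ is positive. Hence $m_k\ge (d_i+1)m_{k-1}\ge\cdots\ge d_i(d_i+1)^{k-1}$. This is a vertex-wise bound. Your sentence claiming that vertex-wise bounds ``can fail because $L^k$ has sign cancellations'' is therefore wrong and should be removed. The opening paragraph on the signless Laplacian is garbled and never used, so it should go too.

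Your route also proves more than the statement: $(L^k)_{ii}\ge (d_i+1)(L^{k-1})_{ii}$ at every vertex. Consequently, equality in the summed inequality for a single $k\ge 3$ forces every ratio $m_j/m_{j-1}$ with $2\le j\le k$ to equal $d_i+1$, and in particular gives equality at $k=3$. So one value $k\ge 3$ already characterizes disjoint unions of complete graphs.

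\textbf{The slip in $(L^3)_{ii}$.} Your formula is off in the middle coefficient. Expanding $(D-A)^3$, the terms with a single factor $A$ vanish on the diagonal. The terms $DA^2$ and $A^2D$ each contribute $d_i^2$, $ADA$ contributes $\sum_{j\sim i}d_j$, and $-A^3$ contributes $-2t_i$. Hence
\[
(L^3)_{ii}=d_i^3+2d_i^2+\sum_{j\sim i}d_j-2t_i .
\]
As a check, on $K_{d+1}$ this gives $d(d+1)^2$, whereas your coefficient $3d_i^2$ would not. Your next display, $\sum_{j\sim i}(d_j-d_i)=2t_i-d_i(d_i-1)$, is exactly what the corrected formula yields, so the rest of the argument stands. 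The final step needs only one observation: two vertices at distance two would both lie in the neighbourhood of a common neighbour, which is a clique.

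\textbf{Two shortcuts.} First, sum the corrected formula over $i$ and use Remark~\ref{remark:counting}:
\[
\ell_3(G)=\sum_i d_i(d_i+1)^2+\sum_i d_i(d_i-1)-6\tau(G)=\sum_i d_i(d_i+1)^2+2\nu(G).
\]
This is the identity $g_3=2\nu$ quoted in the paper. Equality at $k=3$ then says directly that $G$ has no induced $3$-path, with no need for vertex-wise equality. Second, the support condition you derived gives $L^2e_i=(d_i+1)Le_i$. For $v_j$ at distance two from $v_i$, the $j$-th entry of the left side is the number of common neighbours of $v_i$ and $v_j$, which is positive, while the right side is $0$. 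So the step you expected to be the main obstacle takes one line.
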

Lemma~\ref{lemma:gaps} motivates to define the \emph{gap sequence} $g_k(G)$ of a graph $G$ with degree sequence $(d_1,d_2,\ldots,d_n)$ 
as $g_k(G)=\ell_k(G)-\sum_{i=1}^{n}d_i(d_i+1)^{k-1}$. Petingi and Rodríguez observed that $g_1(G)=g_2(G)=0$ and $g_3(G)=2\nu(G)$. Consequently, by Equation~\ref{eq:nodrop} it follows that for all $x\geq n$,
\begin{align}\label{ineq:chain}
-\log\left(\frac{P_G(x)}{x^n}\right) &= \sum_{k=1}^{\infty}\frac{\ell_k(G)}{kx^k} \geq 
\sum_{k=1}^{\infty}\frac{\sum_{i=1}^{n}d_i(d_i+1)^{k-1}}{kx^k} + \frac{2\nu(G)}{3x^3} \nonumber\\
&=\sum_{i=1}^{n}\frac{d_i}{d_i+1}\sum_{k=1}^{\infty}\frac{(d_i+1)^k}{kx^k} + \frac{2\nu(G)}{3x^3} = -\sum_{i=1}^{n}\frac{d_i}{d_i+1}\log\left(1- \frac{d_i+1}{x}\right)+ \frac{2\nu(G)}{3x^3} \nonumber\\
&= -\log\left(e^{-\frac{2\nu(G)}{3x^3}}\prod_{i=1}^{n}\left(1-\frac{d_i+1}{x} \right)^{\frac{d_i}{d_i+1}} \right). 
\end{align}
Rearranging terms, we obtain that
\begin{equation}\label{eq:poly}
P_G(x) \leq x^n e^{-\frac{2\nu(G)}{3x^3}}\prod_{i=1}^{n}\left( 1- \frac{d_i+1}{x}\right)^{\frac{d_i}{d_i+1}}.    
\end{equation}
The authors obtained the following result replacing~\eqref{eq:poly} into the expression for $t(\overline{G})$ given in Lemma~\ref{lemma:t}. 
\begin{lemma}[Petingi and Rodríguez~\cite{Petingi-2002}]\label{lemma:PetingiRodriguez}
If $G$ is a simple graph on $n$ vertices with degree sequence $(d_1,d_2,\ldots,d_n)$ then  
\begin{equation}\label{eq:bound1}
t(\overline{G}) \leq n^{n-2}e^{-\frac{2\nu(G)}{3n^3}}\prod_{i=1}^{n}\left( 1- \frac{d_i+1}{n}\right)^{\frac{d_i}{d_i+1}}.
\end{equation}
The equality occurs if and only if $G$ is a union of complete graphs.
\end{lemma}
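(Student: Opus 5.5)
The bound follows by combining Lemma~\ref{lemma:t} with inequality~\eqref{eq:poly}. The one subtlety is that~\eqref{eq:poly} was derived assuming $\overline{G}$ connected.

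\textbf{Disconnected complement.} If $\overline{G}$ is disconnected, then $t(\overline{G})=0$. The right-hand side of~\eqref{eq:bound1} is nonnegative: each factor $1-(d_i+1)/n$ lies in $[0,1)$, since $d_i\le n-1$. So the inequality holds trivially.

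For the equality characterization in this case, equality requires the right-hand side to vanish. That happens exactly when some $d_i=n-1$, i.e.\ when $G$ has a dominating vertex. Such a $G$ need not be a union of complete graphs, for instance a star. I would therefore expect the authors' intended reading to assume $\overline{G}$ connected, or equivalently that no vertex has degree $n-1$. I would state this restriction explicitly, noting that if $G=K_n$ both sides are $0$, consistent with the claim.

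\textbf{Connected complement.} Assume $\overline{G}$ is connected. Applying Lemma~\ref{lemma:t} to $\overline{G}$, whose complement is $G$, gives $t(\overline{G})=\frac{1}{n^2}P_G(n)$. Now evaluate~\eqref{eq:poly} at $x=n$. This is legitimate because~\eqref{eq:nodrop} is valid for $x\ge n$: every eigenvalue of $L(G)$ lies in $[0,n)$ by Lemma~\ref{lemma:basics}\ref{prop3} and \ref{prop6}. We obtain
\[
P_G(n)\le n^n e^{-\frac{2\nu(G)}{3n^3}}\prod_{i=1}^n\left(1-\frac{d_i+1}{n}\right)^{\frac{d_i}{d_i+1}}.
\]
Here each base $1-(d_i+1)/n$ is positive, because $\overline{G}$ connected forces $d_i\le n-2$, so the logarithms used in~\eqref{ineq:chain} are defined. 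Dividing by $n^2$ yields~\eqref{eq:bound1}.

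\textbf{Equality.} The only inequality in~\eqref{ineq:chain} is the termwise estimate $\ell_k(G)\ge\sum_i d_i(d_i+1)^{k-1}+[k=3]\,2\nu(G)$. This holds because $g_k\ge 0$ for all $k$ (Lemma~\ref{lemma:gaps}), $g_3=2\nu(G)$, and $g_1=g_2=0$. Since every term of the series carries a positive weight $1/(kx^k)$, equality at $x=n$ forces $g_k(G)=0$ for every $k\ne 3$. By the equality clause of Lemma~\ref{lemma:gaps}, it is cleanest to use a single index, for instance $k=4$, and I would check that clause applies to individual $k\ge 4$; this is the one point I expect to need care. Vanishing of $g_4$ (or of all $g_k$, $k\geq 4$) makes $G$ a disjoint union of complete graphs. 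Then $\nu(G)=0$ as well, and equality holds for every $k$.

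Conversely, if $G$ is a disjoint union of complete graphs, then $\nu(G)=0$ and all $g_k=0$. Every step is then an equality; one can also verify it directly via Remark~\ref{remark:complete}, since $P_G(n)=\prod_j n\,(n-n_j)^{n_j-1}$ over the cliques $K_{n_j}$.
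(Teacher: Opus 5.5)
Your inequality argument is the paper's: evaluate \eqref{eq:poly} at $x=n$ and apply Lemma~\ref{lemma:t} to $\overline{G}$. The paper proves nothing more and simply cites Petingi and Rodr\'iguez for the equality clause.

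Your additions are correct and useful.
\begin{itemize}
\item The paper derives \eqref{eq:poly} only for connected $\overline{G}$, so your separate handling of disconnected $\overline{G}$ is needed.
\item Your counterexample is valid. If $G$ has a vertex of degree $n-1$ (e.g.\ $G=K_{1,n-1}$ with $n\ge 3$), both sides of \eqref{eq:bound1} are $0$, so the equality clause as stated is false.
\end{itemize}
One imprecision: ``$\overline{G}$ connected'' is not equivalent to ``no vertex of degree $n-1$''. For $G=C_4$ there is no such vertex, yet $\overline{G}=2K_2$ is disconnected. Either hypothesis still rescues the equality clause. When $\overline{G}$ is disconnected and $G$ has no dominating vertex, the inequality is strict, and such a $G$ is not a union of cliques, since the only union of cliques with disconnected complement is $K_n$.

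\textbf{The gap} is in the connected equality case. From equality you correctly get $g_k(G)=0$ for every $k\neq 3$, but you only assert that this forces a disjoint union of cliques. Lemma~\ref{lemma:gaps} cannot be invoked as stated. Its equality clause needs equality for every $k$, including $k=3$, and $g_3(G)=2\nu(G)=0$ is exactly what you do not yet know. As written, the ``only if'' direction is unproved.

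A short fix uses all $k\ge 4$ at once. For $k\ge 1$,
\[
g_k(G)=\sum_{\beta>0}w_\beta\beta^k,\qquad w_\beta=|\{i:\lambda_i=\beta\}|-\sum_{i:\,d_i+1=\beta}\tfrac{d_i}{d_i+1},
\]
where the weights $w_\beta$ have finite support $\beta_1,\dots,\beta_s$.
\begin{itemize}
\item The conditions $g_4=\dots=g_{s+3}=0$ form an invertible Vandermonde system in the unknowns $w_{\beta_l}\beta_l^4$. Hence $w\equiv 0$, and in particular $g_3(G)=2\nu(G)=0$.
\item A graph with no induced $P_3$ has no two vertices at distance $2$. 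So every component is a clique; alternatively, now all $g_k=0$ and Lemma~\ref{lemma:gaps} applies.
\end{itemize}
Your single-index route with $k=4$ can also be made to work. Expanding $\mathrm{tr}((D-A)^4)$ and converting to induced subgraph counts shows $g_4(G)\ge 12\nu(G)$. However, that computation has to be carried out; it cannot just be asserted.
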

Let $f(d,x)$ be the product appearing on the right-hand side of equation~\eqref{eq:poly},
\begin{equation}\label{eq:f}
f(d,x) = \prod_{i=1}^{n}\left( 1- \frac{d_i+1}{x}\right)^{\frac{d_i}{d_i+1}}. 
\end{equation}
The authors proved that $f(d,x)$ is maximized among simple graphs $G$ in $\mathcal{S}_{n,m}$ precisely when $G$ is almost-regular. 
\begin{lemma}[Petingi and Rodríguez~\cite{Petingi-2002}]\label{lemma:f}
If $H \in \mathcal{S}_{n,m}-\mathcal{A}_{n,m}$ and $G \in \mathcal{A}_{n,m}$, then $f(d(H),x)< f(d(G),x)$ for all $x\geq n$.
\end{lemma}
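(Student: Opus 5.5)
Since $f(d,x)$ is a product of positive factors (for $x\geq n$ every $d_i+1\leq n\leq x$; if some $d_i=n-1$ and $x=n$ the factor is $0^{(n-1)/n}=0$, which we treat separately), I would compare $\log f(d,x)=\sum_{i=1}^n \phi_x(d_i)$, where
\[
\phi_x(t)=\frac{t}{t+1}\log\left(1-\frac{t+1}{x}\right), \qquad 0\leq t\leq x-1 .
\]
The statement then becomes a majorization/Karamata argument. Graphs in $\mathcal{S}_{n,m}$ all have degree sum $2m$. The almost-regular degree sequence of $G$ is the unique integer sequence with that sum majorized by every other integer sequence with the same sum. So it suffices to show that $\phi_x$ is \emph{strictly concave} on the relevant range; then $\sum_i\phi_x(d_i)$ is strictly larger for the almost-regular sequence than for any sequence not almost-regular.

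Concretely, I would avoid full Karamata and use the elementary smoothing step. If $H$ is not almost-regular, it has degrees $a\geq b+2$. Replacing them by $a-1,b+1$ strictly increases the sum provided $\phi_x(a)-\phi_x(a-1)<\phi_x(b+1)-\phi_x(b)$, which follows from strict concavity on integers. Repeating this finitely many times reaches the almost-regular multiset, which is the degree multiset of $G$ (all graphs in $\mathcal{A}_{n,m}$ have the same degree multiset). Note that we only need this at the level of sequences; no graph realization of the intermediate sequences is required, since $f$ depends only on the sequence.

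The main obstacle is verifying concavity of $\phi_x$. I would write $s=t+1\in[1,x]$, so that
\[
\phi=\left(1-\frac1s\right)\log\left(1-\frac sx\right)=\left(1-\frac1s\right)\psi(s), \qquad \psi(s)=\log\left(1-\frac{s}{x}\right).
\]
Differentiating twice gives
\[
\phi''=-\frac{2}{s^3}\psi+\frac{2}{s^2}\psi'+\left(1-\frac1s\right)\psi'',
\]
with $\psi=\log(1-u)$, $u=s/x$, $\psi'=-1/(x-s)$, and $\psi''=-1/(x-s)^2$. The last term is $\leq 0$, but the first term is $\geq 0$ because $\psi\leq0$, so a real estimate is needed. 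Using $-\log(1-u)=\sum_k u^k/k$, the sum $-\frac{2}{s^3}\psi+\frac{2}{s^2}\psi'$ becomes
\[
\frac{2}{s^3}\sum_{k\geq1}u^k\left(\frac1k-1\right)\leq 0 ,
\]
since $\frac{2}{s^2}\psi'=-\frac{2}{s^3}\cdot\frac{u}{1-u}=-\frac{2}{s^3}\sum_k u^k$. Hence $\phi''<0$ for $1<s<x$, and strict concavity holds. At the endpoint $s=x$ (degree $n-1$ when $x=n$) the value is $-\infty$ or the factor is $0$. Then $f(d(H),x)=0<f(d(G),x)$, unless $G$ also has such a vertex. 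In that case $G$ has minimum degree $\geq n-2$, and a direct check, or continuity plus the strict inequality for $x>n$, handles it. I expect this boundary case to need the most care.
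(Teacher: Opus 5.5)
Your main argument is sound. The second-derivative computation is correct: in fact $-\frac{2}{s^3}\psi+\frac{2}{s^2}\psi'=\frac{2}{s^3}\sum_{k\ge 2}u^k\left(\frac1k-1\right)<0$ for $0<u<1$, so $\phi''<0$ on all of $(0,x)$, including $s=1$. The smoothing step $(a,b)\mapsto(a-1,b+1)$ with $a\ge b+2$ strictly increases $\sum_i\phi_x(d_i)$. It terminates, since for instance $\sum_i d_i^2$ drops by at least $2$ each time, and it ends at the unique almost-regular multiset. This proves $f(d(H),x)<f(d(G),x)$ for every $x>n$. It also gives the inequality at $x=n$ whenever $G$ has no vertex of degree $n-1$: if $H$ has such a vertex, then $f(d(H),n)=0<f(d(G),n)$; if not, all degrees of $H$ and of the intermediate sequences lie in $[0,n-2]$, inside the open domain. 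The paper only quotes this lemma from Petingi and Rodr\'iguez without proof, so there is no in-paper route to compare with.

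The gap is your last step. Neither continuity nor a direct check can rescue the case $x=n$ when $G$ has a vertex of degree $n-1$, because the strict inequality is false there. Continuity from $x>n$ only yields $\le$. Directly, such a $G$ has all degrees in $\{n-2,n-1\}$, so $\overline{G}$ is a matching missing at least one vertex, and $|E(\overline{H})|=|E(\overline{G})|\le (n-1)/2$. Hence $\overline{H}$ has an isolated vertex, $H$ also has a vertex of degree $n-1$, and $f(d(H),n)=f(d(G),n)=0$.

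Concretely, take $n=5$ and $m=8$. Let $G=K_5-2K_2$, with degrees $4,3,3,3,3$. Let $H$ be $K_5$ minus two edges sharing a vertex, with degrees $4,4,3,3,2$. Then $H$ is not almost-regular, yet both values of $f$ at $x=5$ vanish.

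So you should not claim to handle this case. Instead, prove the lemma for $x>n$, or for $x\ge n$ under the extra hypothesis that $G$ has maximum degree at most $n-2$ (equivalently, $\overline{G}$ has no isolated vertex). That restricted version is all that is needed downstream. In the paper's applications the maximum degree is $d$ and the number of vertices is much larger, and in the degenerate case $\overline{G}$ has an isolated vertex, so $t(\overline{G})=0$ anyway.
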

A longstanding conjecture proposed by Boesch~\cite{Boesch-1986} states that every $t$-optimal graph is almost-regular. Petingi and Rodríguez proved that the Boesch conjecture holds in an asymptotic sense which is precisely stated in Proposition~\ref{proposition:PetingiRodriguez-almostregular}. The following notation will be used throughout this article. For each almost-regular graph $G_0$ whose vertices have degree $d-1$ or $d$ and each pair of nonnegative integers $p$ and $q$, we let 
$G_0(p,q)$ be the graph $G_0 \cup pK_{d+1} \cup qK_d$. Note that  $G_0(p,q)$ and $\overline{G_0(p,q)}$ are almost-regular graphs.
\begin{proposition}[Petingi and Rodríguez~\cite{Petingi-2002}]\label{proposition:PetingiRodriguez-almostregular}
Let $G_0$ be an almost-regular graph having    
$r$ and $s$ vertices with degrees $d-1$ and $d$, respectively. Then there exists a positive integer $n_0$ such that each graph $H$ in $\mathcal{S}(G_0(p,q))$ with at least $n_0$ vertices that is not almost-regular satisfies that $t(\overline{G_0(p,q)})>t(\overline{H})$.
\end{proposition}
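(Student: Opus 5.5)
We compare $t(\overline{G_0(p,q)})$ and $t(\overline{H})$ through Lemma~\ref{lemma:PetingiRodriguez}, working multiplicatively. Let $G=G_0(p,q)$ and let $N=n(G_0)+p(d+1)+qd$ be the common number of vertices. For $G$ we have an exact expression, not just an upper bound. The graph $pK_{d+1}\cup qK_d$ is a union of complete graphs, so its gap sequence vanishes. The Laplacian spectrum of $G$ is the union of the spectrum of $G_0$ with $p(d+1)$ copies of the eigenvalues of $K_{d+1}$ and $qd$ copies of those of $K_d$ (Remark~\ref{remark:complete}). This gives
\[
P_G(N)=P_{G_0}(N)\,N^{p+q}(N-d-1)^{pd}(N-d)^{q(d-1)}.
\]
Equivalently, by \eqref{eq:nodrop},
\[
-\log\frac{P_G(N)}{N^N}=\sum_{i}\Bigl(-\tfrac{d_i}{d_i+1}\log\bigl(1-\tfrac{d_i+1}{N}\bigr)\Bigr)+\sum_{k\ge 3}\frac{g_k(G_0)}{kN^k}.
\]
Since $G_0$ is fixed and all its eigenvalues are bounded by $2d$ (Lemma~\ref{lemma:basics}\ref{prop4}), we get $g_k(G_0)\le C(2d+1)^k$ for a constant $C$ depending only on $G_0$. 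Hence the correction term is $O(1/N^3)$, with the implied constant depending only on $G_0$ and $d$.

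For $H$, Lemma~\ref{lemma:t} together with \eqref{eq:poly} at $x=N$ gives $t(\overline{H})\le N^{N-2}f(d(H),N)$, ignoring the helpful factor $e^{-2\nu(H)/3N^3}$. So the claim reduces to showing that for $N$ large,
\[
\log f(d(H),N) < \log f(d(G),N) - \sum_{k\ge3}\frac{g_k(G_0)}{kN^k}.
\]
The right-hand side is $\log f(d(G),N)-O(N^{-3})$.

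The key step, and the main obstacle, is a quantitative form of Lemma~\ref{lemma:f}: the deficit $\log f(d(G),N)-\log f(d(H),N)$ must be at least of order $N^{-2}$, uniformly over non-almost-regular $H$ with the same $n,m$. I would write $\phi(t)=-\frac{t}{t+1}\log(1-\frac{t+1}{N})$ as a function of the real degree $t$, and expand it as
\[
\phi(t)=\sum_{k\ge1}\frac{t(t+1)^{k-1}}{kN^k}=\frac{t}{N}+\frac{t(t+1)}{2N^2}+\cdots.
\]
This function is strictly convex in $t$ with $\phi''(t)\ge 1/N^2$. Since $-\log f(d(H),N)=\sum_i\phi(d_i)$ with $\sum_i d_i=2m$ fixed, a non-almost-regular sequence can be moved toward the almost-regular one by a finite chain of transfers. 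Each transfer replaces $d_a,d_b$ with $d_a\ge d_b+2$ by $d_a-1,d_b+1$ and lowers the sum. By convexity the drop is
\[
\phi(d_a)+\phi(d_b)-\phi(d_a-1)-\phi(d_b+1)\ge \frac{d_a-d_b-1}{N^2}\ge\frac{1}{N^2}.
\]
Thus the almost-regular sequence, which is unique up to permutation, beats $d(H)$ by at least $1/N^2$. In fact the second-order term alone already gives this: $\sum_i d_i^2$ increases by at least $2$ whenever the sequence is not almost-regular, and the higher-order terms are all convex too.

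Combining, $\log t(\overline{G})-\log t(\overline{H})\ge N^{-2}-C'N^{-3}>0$ once $N\ge n_0:=\lceil C'\rceil+1$, where $C'$ depends only on $G_0$ and $d$. The remaining care is to check that $C'$ is genuinely independent of $p,q$. This holds because the gap terms come only from the fixed graph $G_0$. Also, $\overline{G}$ is connected for large $N$ because $d<N/2$, which justifies the series expansions used above.
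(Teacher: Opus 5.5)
Your proof is correct, but it takes a different route from the one the paper records. The paper gives no proof of this proposition. Its description of $n_0$ through $v_1(x)<0$ and $v_2(x)<0$ reveals the original Petingi--Rodr\'iguez argument:
\begin{itemize}
\item $d(H)$ is compared only with the two ``nearest'' non-almost-regular sequences. In one, two degree-$(d-1)$ vertices become $d-2,d$; in the other, two degree-$d$ vertices become $d-1,d+1$. This is what the coefficients $b_j,c_j$ encode.
\item The factors coming from $pK_{d+1}\cup qK_d$ cancel.
\item As in the proof of Proposition~\ref{proposition:generalize}, the logarithm of the ratio of the exact value $g(x)$ to the bound has derivative $x^{-3}v_i(x)<0$ and tends to $0$ as $x\to\infty$.
\end{itemize}
The constant $-2$ in $v_1,v_2$ is $b_1=c_1=-2$, the jump in the $\ell_2$ lower bound, which is exactly your second-order gain. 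Integrating $2/t^3$ from $N$ to $\infty$ recovers your $N^{-2}$.

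You instead work at $x=N$ directly. You get a uniform deficit $\log f(d(G),N)-\log f(d(H),N)\ge N^{-2}$ for every non-almost-regular $H$ from $\phi''\ge N^{-2}$ and the transfer chain. Your integral estimate of $d_a-d_b-1$ per transfer is right; a naive mean-value argument would only give $d_a-d_b-2$. This route has three advantages:
\begin{itemize}
\item it avoids reducing to the two extremal sequences (a Schur-concavity step);
\item it avoids the monotonicity-in-$x$ argument;
\item it makes the independence of $n_0$ from $p,q$ transparent, with a threshold of the same order $n(G_0)(2d)^3$ as Remark~\ref{remark:cobertura}.
\end{itemize}
The $v_1,v_2$ route, in exchange, tracks exact coefficients and so can give a smaller $n_0$. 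It is also the template the paper reuses for Proposition~\ref{proposition:generalize}.

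Two small points need tightening. First, $n_0$ must also force $N$ above a fixed multiple of $d$ (say $N\ge 4d+2$) so that your geometric bound on $\sum_{k\ge3}g_k(G_0)/(kN^k)$ actually holds. Second, when $\overline{H}$ is disconnected, the paper's derivation of \eqref{eq:poly} at $x=N$ does not apply. In that case $t(\overline{H})=0<t(\overline{G})$ trivially, since $\overline{G}$ is connected; alternatively, cite Lemma~\ref{lemma:PetingiRodriguez} directly, which carries no connectivity hypothesis.
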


During the proof of Proposition~\ref{proposition:PetingiRodriguez-almostregular} the authors obtained that $n_0$ can be chosen as any integer satisfying simultaneously the three following conditions: (i) $n_0>d$, (ii) $v_1(x)<0$ for all $x\geq n_0$, and (iii) $v_2(x)<0$ for all $x\geq n_0$, where 
\begin{align*}
v_1(x) &= -2+\frac{1}{x}\sum_{j=0}^{\infty}\frac{b_{j+2}}{x^j},\\
v_2(x) &= -2+\frac{1}{x}\sum_{j=0}^{\infty}\frac{c_{j+2}}{x^j},
\end{align*}
with $b_j=\mathrm{tr}(L(G_0)^{j+1})-(d-2)(d-1)^j-(d-1)d^j(r-2)-d(d+1)^j(s+1)$ and $c_j=\mathrm{tr}(L(G_0)^{j+1})-(d-1)d^j(r+1)-d(d+1)^j(s-2)-(d+1)(d+2)^j$.

\begin{remark}\label{remark:cobertura}
A possible choice for $n_0$ is $2d+n(G_0)(2d)^3$. 
In fact, by Lemma~\ref{lemma:basics}\ref{prop4}, 
each eigenvalue of $L(G_0)$ lies in the interval $[0,2d]$ thus  
$\mathrm{tr}(L(G_0)^{j+1})\leq n(G_0)(2d)^{j+1}$. Then 
$b_j \leq n(G_0)(2d)^{j+1}$ and $c_j\leq n(G_0)(2d)^{j+1}$ and 
both functions $v_1(x)$ and $v_2(x)$ are upper bounded by 
$-2+\frac{1}{x}\sum_{j=0}^{\infty}\frac{n(G_0)(2d)^{j+3}}{x^j} = -2 + \frac{n(G_0)(2d)^3}{x-2d}$, which is negative when $x\geq 2d+n(G_0)(2d)^3$.
\end{remark}

Proposition~\ref{proposition:PetingiRodriguez} is a refinement of Proposition~\ref{proposition:PetingiRodriguez-almostregular}. It takes into account Lemma~\ref{lemma:PetingiRodriguez} which gives priority to $\nu$-min graphs among all graphs in $\mathcal{A}_{n,m}$. 
\begin{proposition}[Petingi and Rodríguez~\cite{Petingi-2002}]\label{proposition:PetingiRodriguez}
Let $G_0$ be an almost-regular graph all of whose vertices have degree $d-1$ or $d$. Suppose that $G_0(p,q)$ is $\nu$-min for all $p$ and $q$. Then there exists a positive integer $n_0$ such that each graph $H$ in $\mathcal{S}(G_0(p,q))$ with at least $n_0$ vertices that is not $\nu$-min satisfies that $t(\overline{G_0(p,q)})>t(\overline{H})$.
\end{proposition}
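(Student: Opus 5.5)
By Proposition~\ref{proposition:PetingiRodriguez-almostregular}, I first obtain $n_1$ such that every non-almost-regular $H\in\mathcal{S}(G_0(p,q))$ with at least $n_1$ vertices satisfies $t(\overline{G_0(p,q)})>t(\overline{H})$. It then remains to handle graphs $H\in\mathcal{A}(G_0(p,q))$ that are almost-regular but not $\nu$-min. Write $G=G_0(p,q)$ and $n=n(G)$. Since $G$ is a disjoint union of $G_0$ with complete graphs, $\overline{G}$ is connected once $n>n(G_0)+d$, so the formula of Lemma~\ref{lemma:t} applies. Because $H$ is almost-regular with the same number of vertices and edges as $G$, the two graphs have the same degree sequence up to permutation. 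Hence $f(d(H),x)=f(d(G),x)$, and the degree parts of the two Laplacian sequences coincide, so $\ell_k(G)-\ell_k(H)=g_k(G)-g_k(H)$ for every $k$.

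Next I use the exact expansion~\eqref{eq:nodrop} at $x=n$:
\[
\log\frac{t(\overline{G})}{t(\overline{H})}=\log\frac{P_G(n)}{P_H(n)}=\sum_{k\ge 3}\frac{g_k(H)-g_k(G)}{k\,n^k}.
\]
Here $g_1=g_2=0$ and $g_3=2\nu$. Since $G$ is $\nu$-min and $H$ is not, $\nu(H)\ge \nu(G)+1$, so the $k=3$ term is at least $\frac{2}{3n^3}$. For $k\ge4$ I bound the terms separately. Lemma~\ref{lemma:gaps} gives $g_k(H)\ge 0$, so I may drop $g_k(H)$ and only need an upper bound on $g_k(G)$. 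The complete components contribute zero gap, so $g_k(G)=g_k(G_0)$, which is independent of $p,q$. By Lemma~\ref{lemma:basics}\ref{prop4}, $g_k(G_0)\le \ell_k(G_0)\le n(G_0)(2d)^k$. This gives
\[
\sum_{k\ge4}\frac{g_k(G)}{k n^k}\le \frac{n(G_0)}{4}\sum_{k\ge 4}\Big(\frac{2d}{n}\Big)^k=\frac{n(G_0)(2d)^4}{4n^3(n-2d)}.
\]

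The key point is that the $k=3$ gain scales like $n^{-3}$, while the error is $O(n^{-4})$ with a constant depending only on $G_0$ and $d$. So the log-ratio is positive as soon as $\frac{2}{3}>\frac{n(G_0)(2d)^4}{4(n-2d)}$, that is, for $n\ge 2d+\frac{3}{8}n(G_0)(2d)^4$, in the spirit of Remark~\ref{remark:cobertura}. Taking $n_0$ to be the maximum of this threshold, $n_1$, and $n(G_0)+d+1$ gives the statement.

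The step I expect to be the main obstacle is the tail control. One must check that dropping $g_k(H)\ge0$ loses nothing essential and that $g_k(G)$ really does not grow with $p,q$. Both rest on the gap of a complete graph being zero, which follows from the equality case of Lemma~\ref{lemma:gaps}, and on gaps being additive over disjoint unions, which holds because traces and degree sums are additive.
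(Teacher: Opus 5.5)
Your proof is correct and follows essentially the paper's route. The paper does not reprove this cited result directly; it recovers it as the case $c=2$ of Proposition~\ref{proposition:generalize}, whose proof likewise handles non-almost-regular $H$ via Proposition~\ref{proposition:PetingiRodriguez-almostregular}, then compares the exact expression for $t(\overline{G_0(p,q)})$ with the bound on $t(\overline{H})$ obtained by discarding $g_k(H)\geq 0$ in the tail, using $g_k(G_0(p,q))=g_k(G_0)\leq n(G_0)(2d)^k$ and the integer gain in the first differing gap. The only difference is that you bound the tail directly at $x=n$ rather than showing $(\log w)'<0$ together with $w(x)\to 1$, which gives the slightly smaller threshold $2d+\tfrac{3}{8}n(G_0)(2d)^4$ instead of $2d+n(G_0)(2d)^4$. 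You should also state explicitly that the expansion is valid for $H$: since $\Delta(H)=d$, all eigenvalues of $L(H)$ lie in $[0,2d]$, strictly below $n$.
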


The methodology proposed by Petingi and Rodríguez to find $t$-optimal graphs 
consists of finding an almost-regular graph $G_0$ satisfying the conditions of Proposition~\ref{proposition:PetingiRodriguez}, and then identifying, among all $\nu$-min graphs in $\mathcal{A}(G_0(p,q))$, those graphs whose complements have the maximum number of spanning trees. 

The authors concluded the article with the following conjecture, which, if true, would yield all regular $\nu$-min graphs in the infinitely many classes $\mathcal{R}(G_0(p,0))$ by determining $\nu$-min graphs only in $\mathcal{R}(G_0)$. For brevity we denote  $G_0(p)$ for $G_0\cup pK_{d+1}$.

\begin{conjecture}[Petingi and Rodríguez~\cite{Petingi-2002}]\label{conjecture:PR}
Let $G_0$ be in $\mathcal{R}_d(n)$. If $G_0$ is $\nu$-min in $\mathcal{R}_d(n)$ then, for each positive integer $p$, the graph $G_0(p)$ is $\nu$-min in $\mathcal{R}(G_0(p))$. Conversely, if some graph $G'$ is $\nu$-min in $\mathcal{R}(G_0(p))$ then there exists $G_0'$ in $\mathcal{R}_d(n)$ such that $G'=G_0'(p)$.
\end{conjecture}

Our study of Conjecture~\ref{conjecture:PR} is based on  Remark~\ref{remark:counting}.
\begin{remark}\label{remark:counting}
The authors in~\cite{Petingi-2002} observed that for each $G$ in $\mathcal{S}_{n,m}$ with degree sequence $(d_1,\ldots,d_n)$ it holds that $\sum_{i=1}^{n}\binom{d_i}{2}=3\tau(G)+\nu(G)$. In fact, each 
induced $3$-path includes precisely one pair of adjacent edges, while each triangle includes precisely $3$ 
pairs of adjacent edges. Therefore, a graph is  $\nu$-min in $\mathcal{A}_{n,m}$ if and only if it is $\tau$-max in $\mathcal{A}_{n,m}$.
\end{remark}
We now review the methodology employed by Petingi and Rodríguez to find $t$-optimal graphs~\cite{Petingi-2002}. Our point of departure is Remark~\ref{remark:key}.

\begin{remark}\label{remark:key}  
In the inequality of expression~\eqref{ineq:chain}, each of the terms $\ell_k(G)$ such that $k\neq 3$ 
is replaced by its corresponding lower bound $\sum_{i=1}^{n}d_i(d_i+1)^{k-1}$. 
However, by Lemma~\ref{lemma:gaps}, $\ell_k(G)=\sum_{i=1}^{n}d_i(d_i+1)^{k-1}+g_k(G)$, and sharper bounds for $t(\overline{G})$ could be obtained.
\end{remark}

We will apply Remark~\ref{remark:key} to find  sharper bounds 
for $t(\overline{G})$ than the bound given 
in Lemma~\ref{lemma:PetingiRodriguez}. As a consequence, a generalization of Proposition~\ref{proposition:PetingiRodriguez} will be given (see Proposition~\ref{proposition:generalize}). A key concept to achieve this goal is that of a trace-minimal graph~\cite{Abrego-2006}. A lexicographic order among sequences of real numbers is first required.
\begin{definition}
For each pair of sequences of real numbers $(b_i)_{i\in \mathbb{Z}^+}$ and $(c_i)_{i\in \mathbb{Z}^+}$  
we write  $(b_i) \preceq (c_i)$ when precisely one of the two following conditions holds:
\begin{enumerate}[label=(\roman*)]
\item For each positive integer $k$ it holds that $b_k = c_k$ (i.e., the sequences are identical),  
\item There exists a positive integer $j$ such that for each $k\in \{1,2,\ldots,j-1\}$ it holds that $b_k= c_k$ but $b_j<c_j$.
\end{enumerate}
\end{definition}

For each $G$ in $\mathcal{S}_{n,m}$ we define its \emph{adjacency sequence} $(a_i(G))_{i\in \mathbb{Z}^+}$ as $a_i(G)=\mathrm{tr}(A(G)^i)$.  

\begin{definition}[\'Abrego \textit{et~al.}~\cite{Abrego-2006}]
A graph $G$ in $\mathcal{R}_{d}(n)$ is \emph{trace-minimal} if for each $H$ in $\mathcal{R}_{d}(n)$ it holds that $(a_i(G)) \preceq (a_i(H))$. 
\end{definition}
To close this section, we will give a list of results concerning trace-minimal graphs. First, we observe that trace-minimal graphs have the least number of triangles.
\begin{remark}\label{remark:triangles}
For each $G$ in $\mathcal{S}_{n,m}$ it is known that $a_1(G)=0$, $a_2(G)=m$ and $a_3(G)=6\tau(G)$. 
Consequently, each trace-minimal $G$ 
in $\mathcal{R}_d(n)$ has the least number of triangles among all graphs in $\mathcal{R}_d(n)$. 
\end{remark}

For each vertex $v$ in $V(G)$, let $\tau(G,v)$ be the number of triangles in $G$ that contain $v$.



\begin{lemma}[\'Abrego \textit{et~al.}~\cite{Abrego-2006}]\label{lemma:component}
If $G$ is any $\tau$-max graph in $\mathcal{R}_d(n)$ then either $K_{d+1}$ is a component of $G$ or $\tau(G,v)\leq \binom{d}{2}-2$ for each vertex $v$ in $G$. 
\end{lemma}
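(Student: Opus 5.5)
We argue by contradiction, using a local switching that increases the triangle count; by Remark~\ref{remark:counting}, a graph in $\mathcal{R}_d(n)$ is $\tau$-max if and only if it is $\nu$-min, so we may work with triangles throughout. Let $G$ be $\tau$-max in $\mathcal{R}_d(n)$, and suppose $K_{d+1}$ is not a component of $G$. Suppose further that some vertex $v$ has $\tau(G,v)\geq \binom{d}{2}-1$. Let $N=N(v)$, so $|N|=d$. The triangles through $v$ correspond to the edges of $G[N]$, hence $G[N]$ has at least $\binom{d}{2}-1$ edges. We split into two cases.

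\emph{Case 1: $G[N]$ is complete.} Then every $u\in N$ has $d-1$ neighbours in $N$ and is also adjacent to $v$, so its degree $d$ is exhausted. Thus $\{v\}\cup N$ induces a $K_{d+1}$ with no edges leaving it, so it is a component. This contradicts the assumption.

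\emph{Case 2: $G[N]$ misses exactly one edge $xy$.} Then every vertex of $N\setminus\{x,y\}$ is saturated inside $\{v\}\cup N$. Each of $x$ and $y$ has exactly one neighbour outside $\{v\}\cup N$; call them $x'$ and $y'$. We want to perform the switch that deletes $xx'$ and $yy'$ and adds $xy$ and $x'y'$. The new graph $G'$ is $d$-regular and contains the component $K_{d+1}$ on $\{v\}\cup N$, provided $x'\neq y'$ and $x'y'\notin E(G)$. We then count the change in $\tau$:
\begin{itemize}
\item Adding $xy$ creates $d-1$ triangles, namely $xyw$ for each $w\in (N\setminus\{x,y\})\cup\{v\}$.
\item Deleting $xx'$ destroys only triangles $xx'z$ with $z$ a common neighbour of $x$ and $x'$ outside $\{v\}\cup N$. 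No such $z$ exists, since $x'$ is the only outside neighbour of $x$, so no triangles are lost. The same holds for $yy'$.
\item Adding $x'y'$ creates at least $0$ triangles.
\end{itemize}
Hence $\tau(G')\geq \tau(G)+d-1>\tau(G)$, contradicting $\tau$-maximality.

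\emph{Degenerate subcases.} These are the main obstacle.

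If $x'=y'=w$, we cannot add the edge $x'y'$. Instead, delete $xw$ and $yw$ and add $xy$; this leaves $w$ with degree $d-2$. To fix this, choose an edge $ab$ far from $w$, i.e.\ with $a,b\notin N(w)\cup\{w\}$, delete $ab$, and add $wa$ and $wb$. Such an edge exists when $n$ is large compared to $d$; otherwise $G$ is nearly complete, and we handle that situation directly. Count the change:
\begin{itemize}
\item Adding $xy$ gains $d-1$ triangles.
\item Deleting $xw$ and $yw$ loses only the triangle $xyw$, which did not exist before since $xy\notin E(G)$, so nothing is lost.
\item Deleting $ab$ loses at most $d-1$ triangles.
\item Adding $wa$ and $wb$ gains at least one triangle, namely $wab$.
\end{itemize}
The net change is positive, which again gives a contradiction.

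If $x'y'\in E(G)$, a cleaner choice is available: take any edge $x'z$ with $z\neq y'$ and $y'z'$ with $z'\neq x'$, and reroute them so that the net change remains at least $d-1$ minus the at most $2(d-1)$ triangles lost. This does not balance directly. I therefore expect to instead exploit the triangle $x'y'$-structure: deleting $xx'$ and $yy'$ and adding $xy$ leaves $x'$ and $y'$ each short by one degree. Pick an edge $ab$ with $a,b\notin N(x')\cup N(y')$, delete it, and add $x'a$ and $y'b$. This loses at most $d-1$ triangles and gains $d-1$. So the comparison is tight, and the triangle $x'y'a$ or a careful choice of $ab$ must supply the strict gain. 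Getting this final bookkeeping to strictly favour $G'$ is where I expect the real work, and the existence of a suitable far edge $ab$ is also where I expect to need the most care.
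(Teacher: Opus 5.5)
Your Case~1 and the generic part of Case~2 ($x'\neq y'$, $x'y'\notin E(G)$) are correct: the switch destroys no triangle and creates the $d-1$ triangles on $xy$. The gap is in the two degenerate subcases, which carry the real content of the lemma (the paper quotes the lemma from \'Abrego \emph{et~al.}\ without proof).

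\begin{itemize}
\item For $x'=y'=w$ your count is wrong. Once $ab$ is deleted, $wab$ is not a triangle. The net change is therefore only at least $(d-1)-t(ab)\ge 0$, where $t(ab)\le d-1$ is the number of triangles on $ab$. That gives $\tau(G')\ge\tau(G)$, which is no contradiction. You also leave the existence of the far edge to ``$n$ large, otherwise nearly complete'', although the lemma has no hypothesis on $n$.
\item You leave the subcase $x'y'\in E(G)$ open yourself, with the same tight count.
\end{itemize}

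These subcases genuinely need $d\ge 3$, and your sketch never uses it. For $d=2$ the statement is false: $C_4\in\mathcal{R}_2(4)$ and $C_5\in\mathcal{R}_2(5)$ are $\tau$-max, have no $K_3$ component, and satisfy $\tau(G,v)=0>\binom{2}{2}-2$. They are exactly your configurations $x'=y'$ and $x'y'\in E(G)$. The paper applies the lemma only for $d\ge 3$.

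The missing idea is to route the repair through one of the $d-2\ge 1$ remaining neighbours of the deficient vertex, so that a new triangle closes. Degree counting then gives existence for every $n$. Write $N[u]=N(u)\cup\{u\}$.

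\emph{Subcase $x'=y'=w$.} Pick $z\in N(w)\setminus\{x,y\}$. Since $z$ has at most $d-2$ neighbours in $N[w]$, some $a\in N(z)\setminus N[w]$ exists. The vertex $a$ also has at most $d-2$ neighbours in $N[w]$, so some $b\in N(a)\setminus N[w]$ exists, and necessarily $b\neq z$. Delete $xw,yw,ab$ and add $xy,wa,wb$. The result is in $\mathcal{R}_d(n)$, and $wza$ is a new triangle. Hence $\tau$ grows by at least $(d-1)-(d-1)+1=1$.

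\emph{Subcase $x'\neq y'$, $x'y'\in E(G)$.} Delete $xx',yy'$, and in the resulting graph choose vertices as follows.
\begin{itemize}
\item Pick $p\in N(x')\setminus\{y'\}$.
\item Pick $a\in N(p)\setminus N[x']$; it exists because $p$ has at most $d-1$ neighbours in $N[x']$.
\item Pick $b\in N(a)\setminus(N[y']\cup\{p\})$. It exists because $x'\notin N(a)$, and either $a\in N[y']$ or $y'\notin N(a)$, so at most $d-1$ vertices of $N[y']\cup\{p\}$ are neighbours of $a$.
\end{itemize}
Now delete $ab$ and add $xy,x'a,y'b$. The new triangle $x'pa$ again yields a net gain of at least $1$.
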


Denote $\tau_{d}(n)=\min\{\tau(G): G\in \mathcal{R}_d(n)\}$. Following the notation in~\cite{Abrego-2006,Saegusa-2007}, 
let $S(n,d)$ be any trace-minimal graph in $\mathcal{R}_d(n)$. 

\begin{lemma}[\'Abrego \textit{et~al.}~\cite{Abrego-2006}]\label{lemma:test}
Let $G$ be in $\mathcal{R}_{n-d-1}(n)$ for some integer $d$ such that $d\geq 3$. 
Let $q$ and $r$ be the only integers such that $r\in \{0,1,\ldots,d\}$ and $n=q(d+1)+r$. 
If $G$ is trace-minimal 
then $G$ is isomorphic to  $E_{d+1}\wedge S(n-d-1,n-2d-2)$ 
or the inequality $n\leq \mu(d,r)$ must be satisfied, where the function $\mu(d,r)$ is defined as follows,
\begin{equation}\label{eq:w}
\mu(d,r) = \frac{r}{4}((d+1)^2-r^2) + \frac{3}{2}\tau_{r}(d+1+r).    
\end{equation}
\end{lemma}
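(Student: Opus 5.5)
The plan is to pass to the complement $H=\overline{G}$, which lies in $\mathcal{R}_d(n)$, and to convert trace-minimality of $G$ into a statement about triangles of $H$. First I will use Remark~\ref{remark:triangles}: a trace-minimal $G$ has the least number of triangles in $\mathcal{R}_{n-d-1}(n)$. For regular graphs, Goodman's count gives $\tau(G)+\tau(H)=\binom{n}{3}-\frac12\sum_i d_i(n-1-d_i)$, and for fixed $n$ and $d$ the right-hand side is a constant. So minimizing $\tau(G)$ is the same as maximizing $\tau(H)$, and $H$ is $\tau$-max in $\mathcal{R}_d(n)$. Now Lemma~\ref{lemma:component} splits the argument into two cases.

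\emph{Case 1: $K_{d+1}$ is a component of $H$.} Write $H=K_{d+1}\cup H'$ with $H'\in\mathcal{R}_d(n-d-1)$. Then $G=E_{d+1}\wedge \overline{H'}$, and $\overline{H'}$ is $(n-2d-2)$-regular on $n-d-1$ vertices. It remains to show that $\overline{H'}$ is trace-minimal. I would use the standard spectral description of a join of two regular graphs. The adjacency spectrum of $E_{d+1}\wedge F$, for $F$ regular, consists of the non-principal eigenvalues of $F$, the eigenvalue $0$ with multiplicity $d$, and two eigenvalues that depend only on $n$ and $d$. Hence $a_i(E_{d+1}\wedge F)=a_i(F)+c_i(n,d)$ for every $i$. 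Lexicographic minimality of $G$ among graphs of the form $E_{d+1}\wedge F$ therefore forces $\overline{H'}$ to be lexicographically minimal, i.e.\ $\overline{H'}\cong S(n-d-1,n-2d-2)$.

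\emph{Case 2: $\tau(H,v)\le\binom d2-2$ for every vertex $v$.} Summing over vertices gives $\tau(H)\le \frac n3\left(\binom d2-2\right)$. For the matching lower bound, write $n=q(d+1)+r$ and use the competitor $(q-1)K_{d+1}\cup H''$, where $H''$ is $d$-regular on $N=d+1+r$ vertices. Its complement $\overline{H''}$ is $r$-regular, and choosing $\overline{H''}$ with $\tau_r(N)$ triangles and applying Goodman's formula on $N$ vertices gives
\begin{equation*}
\tau(H'')=\binom N3-\frac{Ndr}{2}-\tau_r(N).
\end{equation*}
Since $H$ is $\tau$-max, $\tau(H)\ge (q-1)\binom{d+1}{3}+\tau(H'')$. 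Combining this with the upper bound and substituting $q=(n-r)/(d+1)-1$, all terms proportional to $n$ should nearly cancel: $\frac{n}{d+1}\binom{d+1}3-\frac n3\binom d2=\frac n3\cdot\frac{d(d-1)}{2}\cdot\frac{(d+1)-(d+1)}{d+1}$ vanishes exactly. What survives is $\frac{2n}{3}$ on one side against an $n$-free expression in $d$ and $r$ on the other. Solving for $n$ should yield $n\le \frac r4((d+1)^2-r^2)+\frac32\tau_r(d+1+r)=\mu(d,r)$.

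The main obstacle is this final algebraic simplification: getting the exact constant $\mu(d,r)$ requires carefully expanding $\binom{N}{3}$, $\frac{Ndr}{2}$ and the $r$-dependent part of $(q-1)\binom{d+1}3$. If the constants do not match on the first attempt, I would recheck the choice of competitor graph, for instance whether $q$ or $q-1$ copies of $K_{d+1}$ are used. A secondary subtlety is confirming in Case 1 that the extra eigenvalues of the join genuinely depend only on $(n,d)$, which holds because both parts are regular.
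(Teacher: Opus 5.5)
Your proposal is correct and follows the route the paper points to. The paper does not prove this lemma itself, but its proof of Lemma~\ref{lemma:component2} is explicitly modeled on \'Abrego et al.'s argument, and your plan matches it: pass to the $\tau$-max complement in $\mathcal{R}_d(n)$, split via Lemma~\ref{lemma:component}, transfer trace-minimality through the join spectrum in the first case, and in the second case play the per-vertex bound against the competitor $(q-1)K_{d+1}\cup H''$.

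The algebra you were unsure about closes exactly with $q-1$ copies. Write $N=d+1+r$, so that $q-1=(n-N)/(d+1)$ (not $q=(n-r)/(d+1)-1$ as you wrote). The inequality then becomes $n\le \frac{N}{4}\bigl[d(d-1)-(N-1)(N-2)+3dr\bigr]+\frac{3}{2}\tau_r(N)$. The bracket equals $r(d+1-r)$, which gives precisely $\mu(d,r)$.
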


The inequality $n\leq \mu(d,r)$ from Lemma~\ref{lemma:test} does not hold when 
$n$ is large enough. The following result is a corollary of Lemma~\ref{lemma:test}.
\begin{lemma}[\'Abrego \textit{et~al.}~\cite{Abrego-2006}]\label{lemma:corollary}
Let $d$ and $n$ be integers such that $d\geq 3$ and $n\geq d^3$. 
If $G$ is trace-minimal in $\mathcal{R}_{n-d-1}(n)$ then  $G=E_{d+1}\wedge S(n-d-1,n-2d-2)$.
\end{lemma}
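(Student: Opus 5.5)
The statement to prove is Lemma~\ref{lemma:corollary}, which is presented as a corollary of Lemma~\ref{lemma:test}. The plan is to show that, under the hypotheses $d\geq 3$ and $n\geq d^3$, the alternative $n\leq \mu(d,r)$ in Lemma~\ref{lemma:test} can never hold. Concretely, I will prove $\mu(d,r)<d^3$ for every $r\in\{0,1,\ldots,d\}$ and every $d\geq 3$. Then any trace-minimal $G$ in $\mathcal{R}_{n-d-1}(n)$ with $n\geq d^3$ violates $n\leq\mu(d,r)$, so Lemma~\ref{lemma:test} forces $G\cong E_{d+1}\wedge S(n-d-1,n-2d-2)$.

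\textbf{Step 1: bound the first term of $\mu(d,r)$.} The function $r\mapsto \frac{r}{4}((d+1)^2-r^2)$ is increasing on $[0,d]$, since its derivative $\frac14((d+1)^2-3r^2)$ can only become negative when $r>(d+1)/\sqrt3$. The simplest safe estimate is $r\leq d$, which gives
\[
\frac{r}{4}\bigl((d+1)^2-r^2\bigr)\leq \frac{d}{4}(d+1)^2,
\]
since $(d+1)^2-r^2\leq (d+1)^2$.

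\textbf{Step 2: bound $\tau_r(d+1+r)$.} This is the minimum number of triangles over $r$-regular graphs on $d+1+r$ vertices. It is at most the triangle count of any single $r$-regular graph on that many vertices, and every $r$-regular graph on $N$ vertices has at most $\frac{N}{3}\binom{r}{2}$ triangles. Hence
\[
\tau_r(d+1+r)\leq \frac{(d+1+r)}{3}\binom{r}{2}\leq \frac{(2d+1)d(d-1)}{6},
\]
and so $\frac32\tau_r\leq \frac{(2d+1)d(d-1)}{4}$. One must check that $r$-regular graphs on $d+1+r$ vertices exist, so that the minimum is over a nonempty set. If $r(d+1+r)$ is odd, the quantity is defined by convention, or that case does not arise because $n=q(d+1)+r$ with $n(n-d-1)$ even. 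In any case any upper bound on the triangle count of such graphs suffices.

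\textbf{Step 3: combine the bounds.} Adding Steps 1 and 2 gives
\[
\mu(d,r)\leq \frac{d(d+1)^2+d(2d+1)(d-1)}{4}=\frac{3d^3+d^2}{4}<d^3,
\]
where the last inequality is equivalent to $d^2<d^3$ and holds for all $d\geq 2$.

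The main obstacle is Step 2: making sure that $\tau_r(d+1+r)$ is well defined and correctly bounded when the parity makes $\mathcal{R}_r(d+1+r)$ empty. I would first argue that the relevant $r$ always admits such graphs, using the parity constraint on $n$ and $r$. If that fails, I would simply observe that Lemma~\ref{lemma:test} is only applied with meaningful values of $\tau_r$. The crude bound leaves ample slack, so even a weaker estimate such as $\tau_r\leq\binom{d+1+r}{3}/\text{const}$ is not needed.
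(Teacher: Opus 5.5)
Your argument is correct and follows the paper's route. The paper presents this lemma as an immediate corollary of Lemma~\ref{lemma:test}, on the grounds that the alternative $n\le\mu(d,r)$ cannot hold once $n\ge d^3$. Your estimate $\mu(d,r)\le (3d^3+d^2)/4<d^3$ is exactly the verification the paper leaves implicit. For the nonemptiness of $\mathcal{R}_r(d+1+r)$ you can be definite rather than hedging: $dn$ even forces $dr$ even, hence $r(d+1+r)=rd+r(r+1)$ is even, as in Remark~\ref{remark:harary}.

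The only slip is your aside that $r\mapsto \frac{r}{4}((d+1)^2-r^2)$ is increasing on $[0,d]$. This is false: for $d=3$ it equals $6$ at $r=2$ and $21/4$ at $r=3$. It does no harm, because you never use it; the crude bound $r\le d$ suffices.
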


Lemma~\ref{lemma:trianglefree} gives a sufficient condition 
under which the last term in the right-hand side of Equation~\eqref{eq:w} vanishes. A proof of Lemma~\ref{lemma:trianglefree} follows from Mantel’s theorem which states that each triangle-free graph on $n$ vertices has at most $\lfloor n^2/4 \rfloor$ edges, and the equality is attained only by $K_{\lfloor n/2 \rfloor, \lceil n/2 \rceil}$. If $n$ is any positive even integer and $2d\leq n$, then we can remove perfect matchings to  $K_{n/2,n/2}$ and obtain a triangle-free graph in $\mathcal{R}_d(n)$ for any choice of integer $d$ such that $2d\leq n$. 
\begin{lemma}\label{lemma:trianglefree}
Let $d$ and $n$ be positive integers such that $n$ is even 
and $d\leq n-1$. Then there exists a triangle-free graph in $\mathcal{R}_d(n)$ if and only if $d\leq n/2$.
\end{lemma}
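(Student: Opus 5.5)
The proof has two directions. For the forward direction (``if''), we construct the triangle-free graph explicitly from the balanced complete bipartite graph. For the converse (``only if''), we apply Mantel's theorem to the number of edges of a $d$-regular graph on $n$ vertices. Throughout, $n$ is even, so $n/2$ is an integer.

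\emph{Sufficiency.} Assume $d\le n/2$ and start from $K_{n/2,n/2}$, which is $(n/2)$-regular and triangle-free. Label the two sides $\{u_0,\ldots,u_{n/2-1}\}$ and $\{w_0,\ldots,w_{n/2-1}\}$. For each $j\in\{0,\ldots,n/2-1\}$, let $M_j=\{u_iw_{i+j \bmod n/2}\}$. The $M_j$ are perfect matchings and partition the edge set of $K_{n/2,n/2}$; this is the standard $1$-factorization of a regular bipartite graph, or equivalently a consequence of K\"onig's theorem. Keep exactly $d$ of these matchings, say $M_0,\ldots,M_{d-1}$, which we may do since $d\le n/2$. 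The resulting graph is $d$-regular on $n$ vertices and is a subgraph of a bipartite graph. Hence it is bipartite, and in particular triangle-free, so it lies in $\mathcal{R}_d(n)$. For $d=0$ this is trivially $E_n$, but $d\ge 1$ here anyway.

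\emph{Necessity.} Suppose $G\in\mathcal{R}_d(n)$ is triangle-free. Then $|E(G)|=nd/2$. Mantel's theorem gives $|E(G)|\le \lfloor n^2/4\rfloor=n^2/4$, so $nd/2\le n^2/4$, that is, $d\le n/2$. The hypothesis $d\le n-1$ only ensures that $\mathcal{R}_d(n)$ can be nonempty, and it plays no further role.

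There is no real obstacle. The only point needing care is justifying the decomposition into perfect matchings, and the explicit cyclic matchings $M_j$ make this immediate. No appeal to the equality case of Mantel's theorem is needed.
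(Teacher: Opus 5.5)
Your proposal is correct and follows essentially the same route as the paper's sketch: Mantel's bound $nd/2\le n^2/4$ gives necessity, and thinning $K_{n/2,n/2}$ by perfect matchings gives sufficiency. Your explicit cyclic $1$-factorization $M_j$ just makes precise the paper's informal phrase ``remove perfect matchings.''
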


\'Abrego \textit{et~al.}~\cite{Abrego-2006} proved that 
strongly regular graphs without triangles, some cages, and some bipartite graphs are trace-minimal graphs. We will use Lemma~\ref{lemma:bipartitem} and  Lemma~\ref{lemma:cases}.
\begin{lemma}[\'Abrego \textit{et~al.}~\cite{Abrego-2006}]\label{lemma:bipartitem}
The only trace-minimal graph in $\mathcal{R}_{n-1}(2n)$ is $K_{n,n}-nK_2$  (i.e., the complete bipartite graph $K_{n,n}$ minus a perfect matching).    
\end{lemma}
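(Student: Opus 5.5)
The plan is to use only the first nonvanishing entry of the adjacency sequence on which graphs in $\mathcal{R}_{n-1}(2n)$ can differ, namely $a_3$, and then a structural theorem on dense triangle-free graphs. By Remark~\ref{remark:triangles}, every graph in $\mathcal{R}_{n-1}(2n)$ has the same $a_1=0$ and $a_2=2n(n-1)/2\cdot 2$ (fixed by the number of edges), and $a_3=6\tau$. So a trace-minimal graph must first minimize the number of triangles. Since $2n$ is even and $n-1\leq (2n)/2$, Lemma~\ref{lemma:trianglefree} gives a triangle-free graph in $\mathcal{R}_{n-1}(2n)$; indeed $K_{n,n}-nK_2$ is one. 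Hence every trace-minimal $G$ in this class satisfies $a_3(G)=0$, that is, $G$ is triangle-free.

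The second step is to show that a triangle-free $(n-1)$-regular graph on $2n$ vertices must be bipartite. For this I would invoke the Andrásfai--Erdős--Sós theorem: a triangle-free graph on $N$ vertices with minimum degree greater than $2N/5$ is bipartite. Here $N=2n$ and $\delta=n-1$, so the condition reads $n-1>4n/5$, i.e. $n>5$. For $n\geq 6$ this yields that $G$ is bipartite. A regular bipartite graph with at least one edge has both colour classes of equal size, so the classes have $n$ vertices each. Each vertex is then adjacent to all but exactly one vertex of the opposite class, so the complement of $G$ inside $K_{n,n}$ is a $1$-regular bipartite graph, i.e. a perfect matching. All such graphs are isomorphic to $K_{n,n}-nK_2$, which gives both existence and uniqueness of the trace-minimal graph for $n\geq 6$.

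The main obstacle is the small range $n\leq 5$, where Andrásfai's bound does not apply and the later traces $a_4,a_5,\ldots$ may decide the comparison. For $n\leq 3$ the check is immediate. When $n=3$, the only triangle-free $2$-regular graph on $6$ vertices is $C_6=K_{3,3}-3K_2$. For $n\in\{4,5\}$ I would enumerate the triangle-free $(n-1)$-regular graphs on $2n$ vertices and compare $a_4 = 2m+4\sum_i\binom{d_i}{2}+8c_4$, where $c_4$ is the number of $4$-cycles; since the first two terms are fixed, this amounts to comparing $c_4$. I expect this step to be genuinely delicate, and it may fail as stated. For $n=4$, the Wagner graph (the Möbius ladder on $8$ vertices) is triangle-free and cubic with only $4$ four-cycles, while the cube $K_{4,4}-4K_2$ has $6$. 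So this case must be checked carefully against the precise hypotheses in~\cite{Abrego-2006}, and the statement may need a restriction on $n$. In the regime used later in this paper, $n$ large, the argument above suffices.
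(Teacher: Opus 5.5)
The paper gives no proof of this lemma (it is quoted from \cite{Abrego-2006}), so your argument has to stand on its own. For $n\geq 6$ it does. The traces $a_1,a_2$ are constant on $\mathcal{R}_{n-1}(2n)$, and $K_{n,n}-nK_2$ is triangle-free, so every trace-minimal graph is triangle-free. The Andr\'asfai--Erd\H{o}s--S\'os theorem (here $\delta=n-1>\frac{2}{5}\cdot 2n$) then forces bipartiteness, and regularity leaves only $K_{n,n}$ minus a perfect matching. With the trivial cases $n\leq 3$, this is a complete proof in that range.

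Your doubt about small $n$ is justified. The statement as quoted, with no restriction on $n$, is false for both $n=4$ and $n=5$, so no argument could close those cases.

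For $n=4$ your numbers are right. The Wagner graph has $c_4=4$ against $c_4=6$ for $Q_3=K_{4,4}-4K_2$, so $a_4=152<168$, while $a_1,a_2,a_3$ agree.

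For $n=5$, take the blow-up of $C_5$ in which every vertex is replaced by two independent vertices and consecutive classes are joined completely. It is $4$-regular on $10$ vertices and triangle-free. Pairs inside a class have codegree $4$ and pairs in classes at distance $2$ have codegree $2$, so $c_4=\frac{1}{2}\bigl(5\binom{4}{2}+20\bigr)=25$. By contrast, $K_{5,5}-5K_2$ has $c_4=\binom{5}{2}\binom{3}{2}=30$. Equivalently, $a_4=480<520$, as the spectra confirm. The blow-up has eigenvalues $4$, $\sqrt{5}-1$ (twice), $-\sqrt{5}-1$ (twice) and $0$ (five times). The graph $K_{5,5}-5K_2$ has eigenvalues $\pm 4$ and $\pm 1$ (four times each).

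This blow-up is exactly the extremal graph showing that the threshold $2N/5$ in Andr\'asfai--Erd\H{o}s--S\'os is sharp, so your method fails precisely where the statement does. The correct claim is that $K_{n,n}-nK_2$ is trace-minimal in $\mathcal{R}_{n-1}(2n)$, and then the unique such graph, exactly when $n\leq 3$ or $n\geq 6$. The paper's only use of the lemma lies in the valid range: it identifies $S(14,6)$ in Lemma~\ref{lemma:7}, where $n=7$.
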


Figure~\ref{fig:sporadic} depicts the graphs $S(11,4)$, $S(12,4)$ and $S(13,6)$. Both graphs $S(11,4)$ and $S(12,4)$ are triangle-free graphs, while $S(13,6)$ has precisely $6$ triangles. Figure~\ref{fig:sporadic} shows in bold lines each of the edges in $S(13,6)$ which are included in a triangle. 
\begin{lemma}[\'Abrego \textit{et~al.}~\cite{Abrego-2006}]\label{lemma:cases}
The only trace-minimal graphs in $\mathcal{R}_{4}(11)$, $\mathcal{R}_{4}(12)$ and $\mathcal{R}_{6}(13)$ are $S(11,4)$, $S(12,4)$ and $S(13,6)$, respectively.     
\end{lemma}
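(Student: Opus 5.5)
Since each of the three classes $\mathcal{R}_4(11)$, $\mathcal{R}_4(12)$ and $\mathcal{R}_6(13)$ is finite, I would prove the statement by a lexicographic filtering argument: pin down the minimum of $a_3$, then of $a_4$, and so on, until a single isomorphism class remains. Throughout I use the standard expressions of the first adjacency traces of a $d$-regular graph on $n$ vertices in terms of small subgraph counts. By Remark~\ref{remark:triangles}, $a_1$ and $a_2$ are constant on $\mathcal{R}_d(n)$ and $a_3=6\tau$. Counting closed $4$-walks (back-and-forth walks along one edge, walks along two distinct edges sharing a vertex, and $4$-cycles) gives
\begin{equation*}
a_4 = nd + 2n\binom{d}{2} + 8c_4 = nd(2d-1)+8c_4 ,
\end{equation*}
where $c_4$ is the number of (not necessarily induced) $4$-cycles. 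Similarly, $a_5=10c_5+\alpha(d)\,\tau$ for an explicit coefficient $\alpha(d)$ coming from closed $5$-walks that wind around a triangle with a pendant excursion. So trace-minimality is decided first by $\tau$, then by $c_4$, then by $c_5$, and so on.

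\emph{Case $\mathcal{R}_4(12)$.} By Lemma~\ref{lemma:trianglefree} triangle-free members exist, so $\tau=0$ is the minimum. The next step is to minimize $c_4$ among triangle-free $4$-regular graphs on $12$ vertices. For a lower bound, count pairs $(v,\{x,y\})$ with $x,y\in N(v)$: there are $12\binom{4}{2}=72$ of them. Every pair $\{x,y\}$ of vertices at distance $2$ contributes $\binom{|N(x)\cap N(y)|}{2}$ four-cycles, and $c_4$ is half the sum of these contributions. Triangle-freeness forces each of the $72$ common-neighbour incidences onto one of the $\binom{12}{2}-24=42$ non-adjacent pairs. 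Convexity then yields a lower bound on $c_4$.

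\emph{Case $\mathcal{R}_4(11)$.} Here $n$ is odd, so Lemma~\ref{lemma:trianglefree} does not apply. I would exhibit the triangle-free graph $S(11,4)$ explicitly (e.g.\ a suitable circulant on $\mathbb{Z}_{11}$) to show $\tau=0$ is attainable, and then run the same $c_4$ count with $55-22=33$ non-adjacent pairs.

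\emph{Case $\mathcal{R}_6(13)$.} Mantel-type counting (Goodman's formula applied to $G$ and its $6$-regular complement) bounds $\tau$ from below, and this bound must be compared with the value $6$ attained by $S(13,6)$. I expect this to be the main obstacle: Goodman's bound alone probably gives something weaker than $6$. There I would argue locally. Each vertex neighbourhood $G[N(v)]$ has $6$ vertices, and the non-neighbourhood has $6$ vertices. Each vertex $u\in N(v)$ has $6$ neighbours in total, one of which is $v$ and at most $6$ of which lie in the non-neighbourhood. Summing $e(G[N(v)])=\tau(G,v)$ over $v$, one tries to show that $\sum_v \tau(G,v)\ge 18$, i.e.\ $\tau\ge 6$. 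The equality analysis should force a rigid structure.

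If the hand arguments do not close all the ties, the fallback is exhaustive computation. The $4$-regular graphs on $11$ and $12$ vertices (a few hundred to a few thousand) and the $6$-regular graphs on $13$ vertices can be generated with a standard generator such as \texttt{genreg}. For each graph one computes the vector $(a_3,a_4,a_5,\dots)$, stopping once the lexicographically minimal vector is attained by a single isomorphism class. By Cayley--Hamilton, traces beyond $a_n$ are determined by $a_1,\dots,a_n$, so only finitely many traces need comparing. One then checks that the survivor is $S(11,4)$, $S(12,4)$ and $S(13,6)$ respectively, which establishes uniqueness.
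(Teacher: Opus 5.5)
The paper gives no proof of this lemma; it imports it from \'Abrego \textit{et~al.} So the only question is whether your plan would establish it. It does, but only through your computational fallback. The three classes are finite, and by Newton's identities $a_1,\dots,a_n$ determine the adjacency spectrum, so graphs still tied at $a_n$ are cospectral and tie forever. Hence enumerating each class and comparing $(a_3,\dots,a_n)$ lexicographically proves both uniqueness and the identification with the graphs of Figure~\ref{fig:sporadic}. If your generator outputs only connected graphs, add the disconnected members: $K_5\cup K_{2,2,2}$ in $\mathcal{R}_4(11)$; $2K_{2,2,2}$ and $K_5\cup H$ with $H\in\mathcal{R}_4(7)$ in $\mathcal{R}_4(12)$; there are none in $\mathcal{R}_6(13)$. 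All of them contain triangles, so they drop out at $a_3$.

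The analytic part, however, settles none of the three cases, and you should not present it as if it did. A slip first: each path of length $2$ carries four closed $4$-walks, so the middle term of your $a_4$ formula is $4n\binom{d}{2}$. Your final expression $nd(2d-1)+8c_4$ is correct.

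In $\mathcal{R}_4(12)$ the convexity count gives $c_4\ge 15$, and $S(12,4)$ attains it. It is vertex-transitive, and every vertex has five non-neighbours with two common neighbours and two with one. So you do get minimality of $a_4$, but uniqueness needs a classification of the equality case, which you do not attempt.

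In $\mathcal{R}_4(11)$ the same count only yields $c_4\ge 17$, whereas $S(11,4)=C_{11}(1,3)$ has $c_4=22$, so the count cannot certify the minimizer. A sharp bound requires the graph induced on the six non-neighbours $M(v)$ of a vertex $v$. It has six edges, the number of common neighbours is $\lambda_{vu}=4-\deg_{M(v)}(u)$, and adjacent vertices of $M(v)$ have disjoint neighbourhoods in $N(v)$. Classifying these induced graphs gives $\sum_{u\in M(v)}\lambda_{vu}^2\ge 28$ except in one explicit graph, hence $c_4\ge 22$. Again, this does not give uniqueness.

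In $\mathcal{R}_6(13)$, Goodman's formula for a $6$-regular graph on $13$ vertices is the identity $\tau(G)+\tau(\overline{G})=52$, which gives no lower bound on $\tau(G)$ at all. The elementary route, Andr\'asfai's theorem together with parity, gives only $\tau\ge 1$. Your local sketch is a hope rather than an argument for $\tau\ge 6$, let alone for uniqueness; also, ``at most $6$'' there should read ``at most $5$''.

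So the lemma rests entirely on the exhaustive computation, and your write-up should say so.
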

Saegusa~\cite{Saegusa-2007} achieved in finding additional trace-minimal graphs. 
\begin{proposition}[Saegusa~\cite{Saegusa-2007}]\label{proposition:saegusa}
For each integer $n$ such that $n\geq 2$ it holds that the only trace-minimal graph in $\mathcal{R}_{n+1}(2n)$ is $E_{n-1} \wedge C_{n+1}$.    
\end{proposition}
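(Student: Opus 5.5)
The plan is to pass to complements and work in $\mathcal{R}_{n-2}(2n)$, where Lemma~\ref{lemma:component} applies. Let $G\in\mathcal{R}_{n+1}(2n)$ be trace-minimal and put $H=\overline{G}\in\mathcal{R}_{n-2}(2n)$.

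\textbf{Step 1: reduce to $\tau$-maximality of $H$.} By Goodman's identity, $\tau(G)+\tau(H)=\binom{2n}{3}-\frac12\sum_i d_i(2n-1-d_i)$. The right-hand side is constant on $\mathcal{R}_{n+1}(2n)$. Remark~\ref{remark:triangles} says trace-minimal graphs minimize $\tau(G)$, so $H$ maximizes $\tau$ in $\mathcal{R}_{n-2}(2n)$, i.e.\ $H$ is $\tau$-max (equivalently $\nu$-min by Remark~\ref{remark:counting}). Lemma~\ref{lemma:component} then gives a dichotomy: either $K_{n-1}$ is a component of $H$, or every vertex of $H$ lies in at most $\binom{n-2}{2}-2$ triangles.

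\textbf{Step 2: the component case.} If $K_{n-1}$ is a component of $H$, then $H=K_{n-1}\cup H'$ with $H'\in\mathcal{R}_{n-2}(n+1)$. Hence $G=E_{n-1}\wedge F$, where $F=\overline{H'}$ is $2$-regular on $n+1$ vertices, i.e.\ a disjoint union of cycles. Within this family, $A(G)$ is built from $A(F)$ and all-ones blocks. Since $F$ is regular, $A(F)$ commutes with $J$, so each $\mathrm{tr}(A(G)^k)$ equals a quantity independent of $F$ plus a positive combination of $\mathrm{tr}(A(F)^j)$ for $j\le k$. I would verify this by block-diagonalizing on the span of the all-ones vectors and its orthogonal complement. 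Lexicographic minimization of $(a_i(G))$ is then governed by the first $k$ at which the cycle structures of the candidates $F$ differ. A union of cycles that contains a cycle of length $\ell<n+1$ acquires extra closed walks around that cycle at $k=\ell$, while $C_{n+1}$ does not. So $F=C_{n+1}$ is the unique minimizer, which gives $G=E_{n-1}\wedge C_{n+1}$.

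\textbf{Step 3: rule out the other case (main obstacle).} Here I must show that when no $K_{n-1}$ component exists, $\tau(H)$ is strictly smaller than $\tau(K_{n-1}\cup\overline{C_{n+1}})$, contradicting $\tau$-maximality. The naive bound $\tau(H)\le\frac{2n}{3}\bigl(\binom{n-2}{2}-2\bigr)$ is not enough. Comparing six times each side, the candidate gives $2n^3-12n^2+16n+6$ and the bound gives $2n^3-10n^2+4n$. The bound wins by order $n^2$, so the per-vertex estimate must be sharpened.

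I would first work with the per-vertex quantity $\tau(H,v)=\binom{n-2}{2}-e(\overline{H}[N_H(v)])$. The complement restricted to a neighborhood $N_H(v)$ of size $n-2$ is where the missing edges live. Each vertex of $G$ has only $n+1$ $G$-neighbors, of which at most $n-2$ lie outside $N_H(v)$. Hence every vertex of $N_H(v)$ sends at least $3$ $G$-edges into $N_H(v)\cup\{v\}$, and at least $2$ into $N_H(v)$ itself, since $v$ is not a $G$-neighbor of any vertex of $N_H(v)$.

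The next refinement would come from double counting non-edges of $H$ inside neighborhoods. This forces $\sum_v e(G[N_H(v)])$ to be roughly $n\cdot n$ unless large cliques appear. If the sum is large, the deficit exceeds the $\Theta(n^2)$ gap. If it is small, many neighborhoods are nearly complete, and a stability argument forces a $K_{n-1}$ component.

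Small values of $n$, where these asymptotic comparisons are not yet valid, I would check directly.
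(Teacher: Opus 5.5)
The paper does not prove this proposition; it is quoted from Saegusa. I am therefore judging your plan on its own terms.

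\textbf{Steps 1 and 2 are sound.} Goodman's identity does transfer trace-minimality of $G$ to $\tau$-maximality of $H=\overline{G}$, and Lemma~\ref{lemma:component} gives the dichotomy you state. In the component case the block decomposition gives exactly
$$a_k(E_{n-1}\wedge F)=(n+1)^k+(1-n)^k-2^k+\mathrm{tr}(A(F)^k).$$
So if $F\neq C_{n+1}$, the first index at which $F$ and $C_{n+1}$ differ is the girth $\ell<n+1$ of $F$. There $F$ has $2\ell$ extra closed walks for each $\ell$-cycle, so $F=C_{n+1}$ is the unique minimizer.

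\textbf{The genuine gap is Step 3, which is the entire content of Saegusa's theorem.} The one concrete estimate you give there is false.
\begin{itemize}
\item For $u\in N_H(v)$ there are $2n-(n-1)=n+1$ vertices outside $N_H(v)\cup\{v\}$, not $n-2$. So all $n+1$ of the $G$-neighbors of $u$ can lie outside.
\item This happens in the extremal graph itself. For $v$ in the $K_{n-1}$ component of $K_{n-1}\cup\overline{C_{n+1}}$, the graph $G[N_H(v)]$ is edgeless.
\item Your bound would give $\tau(H,v)\le\binom{n-2}{2}-(n-2)$ for every $v$, and the candidate you want to prove optimal violates it.
\end{itemize}

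The proposed double counting adds nothing. For a $G$-edge $xy$, the number of $v$ with $x,y\in N_H(v)$ is $|N_G(x)\cap N_G(y)|-2$. Hence $\sum_v e(G[N_H(v)])=3\tau(G)-2n(n+1)$ identically. The inequality you need, $\sum_v e(G[N_H(v)])\ge (n+1)(n-3)$ with equality only in the component case, is therefore literally $\tau(G)\ge n^2-1$.

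The ``stability argument'' is asserted, not given. It would also have to be exact to a single triangle. For even $n\ge 4$, the graph $2\bigl(K_n-\tfrac n2K_2\bigr)\in\mathcal{R}_{n-2}(2n)$ has no $K_{n-1}$ component and exactly one triangle fewer than $K_{n-1}\cup\overline{C_{n+1}}$. Equivalently, its complement $\tfrac n2K_2\wedge\tfrac n2K_2$ has $n^2$ triangles, against $n^2-1$ for $E_{n-1}\wedge C_{n+1}$.

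What is missing is one of the following, and your proposal supplies neither:
\begin{itemize}
\item A real global argument that every $(n+1)$-regular graph on $2n$ vertices has at least $n^2-1$ triangles, with equality only when it contains an independent $(n-1)$-set joined to all remaining vertices. This is a supersaturation statement just past the Tur\'an threshold, where per-vertex bounds like Lemma~\ref{lemma:component} are far too coarse.
\item A tie-breaking argument using $a_4,a_5,\ldots$.
\end{itemize}
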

\noindent Conjecture~\ref{conjecture:Saegusa} would yield a way to construct trace-minimal graphs in $\mathcal{R}_{n+k}(2n)$ when $k\in \{2,\ldots,n-1\}$.
\begin{conjecture}[Saegusa~\cite{Saegusa-2007}]\label{conjecture:Saegusa}
In each nonempty set $\mathcal{R}_{n+k}(2n)$ where $k\in \{2,\ldots,n-1\}$, 
the graph $E_{n-k}\wedge S(n+k,2k)$ is trace-minimal.
\end{conjecture}

\begin{figure}
\begin{center}
\scalebox{1}{
%
\begin{tikzpicture}[
  scale=1,
  nodo/.style={circle,draw=black!120,fill=white!120,
               inner sep=0pt,minimum size=3mm}
]

\def\n{11}
\def\r{2}

\foreach \i in {1,...,11}
\node[nodo] (\i) at ({90 - (\i-1)*360/\n}:\r) {};

\foreach \i in {1,...,11}{
  \pgfmathtruncatemacro{\j}{int(mod(\i,\n)+1)}
  \ifnum\i<\j \draw (\i)--(\j); \fi

  \pgfmathtruncatemacro{\k}{int(mod(\i+2,\n)+1)}
  \ifnum\i<\k \draw (\i)--(\k); \fi
}
\draw (11)--(1);
\draw (9)--(1);
\draw (10)--(2);
\draw (11)--(3);
\end{tikzpicture}
\qquad\quad
%
\begin{tikzpicture}[
  scale=1,
  nodo/.style={circle,draw=black!120,fill=white!120,
               inner sep=0pt,minimum size=3mm}
]

\def\n{12}
\def\r{2}

\foreach \i in {1,...,12}
\node[nodo] (\i) at ({90 - (\i-1)*360/\n}:\r) {};

\foreach \i in {1,...,12}{

  \pgfmathtruncatemacro{\a}{int(mod(\i,\n)+1)}
  \ifnum\i<\a \draw (\i)--(\a); \fi

  \pgfmathtruncatemacro{\b}{int(mod(\i+5,\n)+1)}
  \ifnum\i<\b \draw (\i)--(\b); \fi

  \ifodd\i
    \pgfmathtruncatemacro{\c}{int(mod(\i+2,\n)+1)}
    \ifnum\i<\c \draw (\i)--(\c); \fi
  \else
    \pgfmathtruncatemacro{\c}{int(mod(\i-4,\n)+1)}
    \ifnum\i<\c \draw (\i)--(\c); \fi
  \fi
}
\draw (12)--(1);
\draw (11)--(2);
\end{tikzpicture}
\qquad\quad
\begin{tikzpicture}[
  scale=1,
  nodo/.style={circle,draw=black!120,fill=white!120,
               inner sep=0pt,minimum size=3mm}
]

\def\r{2}

\foreach \i in {1,...,12}
\node[nodo] (\i) at ({90 - (\i-1)*30}:\r) {};

\node[nodo] (c) at (0,0) {};

\draw (1)--(2)--(3)--(4)--(5)--(6)--(7)--(8)--(9)--(10)--(11)--(12)--(1);

\draw (1)--(4);
\draw (2)--(5);
\draw (3)--(6);
\draw (4)--(7);
\draw[very thick] (5)--(8);     
\draw (6)--(9);
\draw[very thick] (7)--(10);    
\draw (8)--(11);    
\draw (9)--(12);
\draw (10)--(1);
\draw (11)--(2);
\draw[very thick] (12)--(3);    

\draw[very thick] (3)--(c);
\draw[very thick] (5)--(c);
\draw[very thick] (7)--(c);
\draw[very thick] (8)--(c);
\draw[very thick] (10)--(c);
\draw[very thick] (12)--(c);

\draw (1)--(6);
\draw (1)--(8);
\draw (2)--(7);
\draw (2)--(9);
\draw[very thick] (3)--(10);
\draw (4)--(9);
\draw (4)--(11);
\draw[very thick] (5)--(12);
\draw (6)--(11);

\draw[very thick] (7)--(8);

\end{tikzpicture}

}
\end{center}
\caption{Graphs $S(11,4)$ (left), $S(12,4)$ (center) and $S(13,6)$ (right).\label{fig:sporadic}}
\end{figure}

\section{Main results}\label{section:main}
The main results of this section are Proposition~\ref{proposition:conj} which shows that Conjecture~\ref{conjecture:PR} proposed by Petingi and Rodríguez is true in an asymptotic sense, and Theorem~\ref{theorem:summary} which gives a methodology to construct 
infinitely many trace-minimal graphs that are $t$-optimal.\\ 

We now study Conjecture~\ref{conjecture:PR}. The key is Lemma~\ref{lemma:component2}. Its proof is inspired by the proof of Lemma~\ref{lemma:test} that appeared in~\cite{Abrego-2006}. Recall that if $d$ and $n$ are positive integers such that $d\leq n-1$ and $dn$ is even then Harary graph $H_{d}(n)$ is a $d$-regular graph on $n$ vertices thus $\mathcal{R}_d(n)$ is nonempty; a construction of $H_{d}(n)$ is given in~\cite{Harary-1962}. A consequence is Remark~\ref{remark:harary}.
\begin{remark}\label{remark:harary}
If $\mathcal{R}_d(n)$ is nonempty and we let $q$ and $r$ be the only integers such that $n=q(d+1)+r$ and $r\in \{0,\ldots,d\}$, then $\mathcal{R}_{d}(d+1+r)$ is nonempty. 
In fact, as $\mathcal{R}_d(n)$ is nonempty, $d\leq n-1$, and handshaking lemma gives that $dn$ is even. As $nd=qd(d+1)+dr$ 
and $qd(d+1)$ is even, 
we also know that $dr$ is even thus $d(d+1+r)$ is even and Harary graph $H_d(d+1+r)$ is included in $\mathcal{R}_{d}(d+1+r)$.
\end{remark}

\begin{lemma}\label{lemma:component2}
For each pair of integers $d$ and $n$ such that $d\geq 3$ and $n\geq d^3$, every 
$\tau$-max graph in $\mathcal{R}_d(n)$ contains $K_{d+1}$ as a component.
\end{lemma}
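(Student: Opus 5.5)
We argue by contradiction, following the structure of the proof of Lemma~\ref{lemma:test}. Let $G$ be $\tau$-max in $\mathcal{R}_d(n)$ with $d\geq 3$ and $n\geq d^3$, and suppose $K_{d+1}$ is not a component of $G$. By Lemma~\ref{lemma:component}, every vertex then satisfies $\tau(G,v)\leq \binom{d}{2}-2$. Summing over vertices gives
\[
\tau(G)=\frac{1}{3}\sum_{v}\tau(G,v)\leq \frac{n}{3}\left(\binom{d}{2}-2\right).
\]
We will compare this with an explicit competitor that has many more triangles.

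Write $n=q(d+1)+r$ with $r\in\{0,\ldots,d\}$. By Remark~\ref{remark:harary}, $\mathcal{R}_d(d+1+r)$ is nonempty, so there is a graph $H_0$ in it. If $r=0$, the competitor is $H=qK_{d+1}$. Otherwise it is $H=(q-1)K_{d+1}\cup H_0$, which is $d$-regular on $n$ vertices. Its triangle count satisfies
\[
\tau(H)\geq (q-1)\binom{d+1}{3}=\frac{(q-1)(d+1)}{3}\binom{d}{2}.
\]
Since $(q-1)(d+1)=n-r-(d+1)\geq n-2d-1$, this gives
\[
3\tau(H)\geq (n-2d-1)\binom{d}{2}.
\]

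The $\tau$-max property requires $\tau(G)\geq\tau(H)$, which combined with the two bounds yields
\[
n\left(\binom{d}{2}-2\right)\geq (n-2d-1)\binom{d}{2},
\]
that is, $2n\leq (2d+1)\binom{d}{2}=\frac{(2d+1)d(d-1)}{2}$. So $n\leq \frac{(2d+1)d(d-1)}{4}<\frac{2d^3}{4}<d^3$, contradicting $n\geq d^3$. Hence $K_{d+1}$ must be a component of $G$.

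The one delicate point is that the competitor $H$ must actually exist in $\mathcal{R}_d(n)$, including when $q=1$ or when $r>0$. This is exactly what Remark~\ref{remark:harary} supplies. In the edge case $q=1$, the bound on $\tau(H)$ is trivial but still valid, and the inequality $n\geq d^3>2(d+1)$ rules that case out anyway, since it forces $q\geq 2$. Beyond that, the argument only needs the arithmetic check $(2d+1)d(d-1)/4<d^3$, which holds for all $d\geq 1$.
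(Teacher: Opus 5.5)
Your proof is correct and is essentially the paper's argument: the per-vertex bound $\tau(G,v)\leq\binom{d}{2}-2$ from Lemma~\ref{lemma:component}, the competitor $(q-1)K_{d+1}\cup H_0$ supplied by Remark~\ref{remark:harary}, and the same arithmetic, which in both versions reduces to needing $2n>(2d+1)\binom{d}{2}$. Your explicit handling of the edge cases ($r=0$, and $q\geq 2$ being forced by $n\geq d^3$) tidies up points the paper leaves implicit.
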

\begin{proof}
Let $G$ be any $\tau$-max graph in $\mathcal{R}_d(n)$. Suppose that $G$ does not contain $K_{d+1}$ as a component. 
By Lemma~\ref{lemma:component}, for every $v$ in $V(G)$ we have that $\tau(G,v)\leq \binom{d}{2}-2$. Then,

\begin{equation*}
\tau(G) = \frac{1}{3}\sum_{v\in V(G)} \tau(G,v) \leq \frac{n}{3}\left(\binom{d}{2}-2\right).
\end{equation*}
 Let $q$ and $r$ be integers such that $n = q(d+1) + r$, where $r \in \{0,\ldots,d\}$. By Remark~\ref{remark:harary}, $\mathcal{R}_d(d+1+r)$ is nonempty. Pick any graph $H \in \mathcal{R}_d(d+1+r)$. 
 Define $G' = (q-1)K_{d+1} \cup H$. Then $G' \in \mathcal{R}_d(n)$. As the number of triangles is additive over components,
\begin{equation*}
\tau(G') = (q-1)\tau(K_{d+1})+\tau(H)=
(q-1)\binom{d+1}{3} + \tau(H) \geq (q-1)\binom{d+1}{3}.
\end{equation*}
We claim that $\tau(G') > \tau(G)$. It suffices to show that
\begin{equation*}
(q-1)\binom{d+1}{3} > \frac{n}{3}\left(\binom{d}{2}-2\right).    
\end{equation*}
Using $n = q(d+1)+r$, we obtain that
\begin{align*}
\frac{n}{3}\binom{d}{2} - (q-1)\binom{d+1}{3}
&= \frac{1}{3}(n-(q-1)(d+1))\binom{d}{2} 
= \frac{1}{3}(d+1+r)\binom{d}{2}.
\end{align*}
As $r \leq d$, we have that $d+1+r \leq 2d+1$, and therefore
\[
\frac{1}{3}(d+1+r)\binom{d}{2} \leq \frac{1}{3}(2d+1)\binom{d}{2} = \frac{2d^3-d^2-d}{6} < \frac{d^3}{3}.
\]
As $n \geq d^3$, it follows that 
$\frac{d^3}{3} \leq \frac{n}{3} < \frac{2n}{3}$ hence,
\[
\frac{n}{3}\binom{d}{2} - (q-1)\binom{d+1}{3} < \frac{2n}{3},
\]
which proves the claim. Therefore, $\tau(G')>\tau(G)$, which is impossible since $G$ is $\tau$-max in $\mathcal{R}_d(n)$. Therefore, $G$ includes $K_{d+1}$ as one of its components, and the lemma follows.
\end{proof}




Recall that for each regular graph $G_0$ in $\mathcal{R}_d(n)$ we denote $G_0(p)$ the graph  
$G_0 \cup pK_{d+1}$. We are now in a position to prove Proposition~\ref{proposition:conj}.

\begin{proposition}\label{proposition:conj}
Let $d$ and $n$ be integers such that $d\geq 3$ and $n\geq d^3$. 
Let $G_0$ be in $\mathcal{R}_d(n)$. If $G_0$ is $\nu$-min in $\mathcal{R}_d(n)$ then, for each positive integer $p$, the graph $G_0(p)$ is $\nu$-min in $\mathcal{R}(G_0(p))$. Conversely, if some graph $G'$ is $\nu$-min in $\mathcal{R}(G_0(p))$ then there exists $G_0'$ in $\mathcal{R}_d(n)$ such that $G'=G_0'(p)$.
\end{proposition}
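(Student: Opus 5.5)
By Remark~\ref{remark:counting}, in any class $\mathcal{R}_d(N)$ the quantity $\sum_i\binom{d_i}{2}=N\binom{d}{2}$ is fixed, so $\nu$-min and $\tau$-max coincide. The plan is therefore to argue entirely with triangles, using Lemma~\ref{lemma:component2} to peel off copies of $K_{d+1}$ one at a time. Write $N_p=n+p(d+1)$. Then $\mathcal{R}(G_0(p))=\mathcal{R}_d(N_p)$, and $N_p\ge n\ge d^3$ for every $p\ge 0$, so Lemma~\ref{lemma:component2} applies in each class $\mathcal{R}_d(N_p)$.

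The key intermediate claim, proved by induction on $p$, is the identity $\max\{\tau(G):G\in\mathcal{R}_d(N_p)\}=\max\{\tau(G):G\in\mathcal{R}_d(n)\}+p\binom{d+1}{3}$. The inequality ``$\ge$'' is witnessed by $G_0(p)$, where $G_0$ is any $\tau$-max graph in $\mathcal{R}_d(n)$, because triangle counts are additive over components. For ``$\le$'', take $G$ to be $\tau$-max in $\mathcal{R}_d(N_p)$ with $p\ge1$. Since $N_p\ge d^3$, Lemma~\ref{lemma:component2} gives $G=G_1\cup K_{d+1}$ with $G_1\in\mathcal{R}_d(N_{p-1})$. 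Then $\tau(G)=\tau(G_1)+\binom{d+1}{3}$, and $G_1$ must itself be $\tau$-max in $\mathcal{R}_d(N_{p-1})$: otherwise a better $G_1'$ would give $G_1'\cup K_{d+1}$ with more triangles than $G$. The induction hypothesis then yields the identity.

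With the identity in hand, both directions follow directly. For the first, if $G_0$ is $\nu$-min, hence $\tau$-max, in $\mathcal{R}_d(n)$, then $\tau(G_0(p))=\tau(G_0)+p\binom{d+1}{3}$ attains the maximum. So $G_0(p)$ is $\tau$-max, hence $\nu$-min, in $\mathcal{R}(G_0(p))$. For the converse, take $G'$ to be $\nu$-min, i.e.\ $\tau$-max, in $\mathcal{R}_d(N_p)$. Applying the peeling step $p$ times (each intermediate graph lives on $N_j\ge d^3$ vertices and is $\tau$-max by the argument above) removes $p$ components $K_{d+1}$, so $G'=G_0'\cup pK_{d+1}=G_0'(p)$ with $G_0'\in\mathcal{R}_d(n)$. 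By the same argument, $G_0'$ is moreover $\tau$-max, hence $\nu$-min, in $\mathcal{R}_d(n)$, which is slightly stronger than the statement requires.

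I expect no serious obstacle, since Lemma~\ref{lemma:component2} carries the real content. The one point to check carefully is the hypothesis $N_j\ge d^3$ at each peeling step. This holds because we only peel down to $N_0=n\ge d^3$; we never need Lemma~\ref{lemma:component2} on $\mathcal{R}_d(n)$ itself. The other point is that ``$\tau$-max'' is taken within the regular class $\mathcal{R}_d(N)$, which is the setting in which Lemma~\ref{lemma:component2} is stated.
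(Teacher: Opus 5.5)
Your proposal is correct and follows essentially the same route as the paper. Both arguments convert $\nu$-min to $\tau$-max via Remark~\ref{remark:counting} and then peel off $K_{d+1}$ components one at a time with Lemma~\ref{lemma:component2}, inducting on $p$; stating the induction as an identity for the maximum triangle count in $\mathcal{R}_d(n+p(d+1))$ is only a repackaging of the paper's induction.
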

\begin{proof}
Let $d$ and $n$ be any pair of integers such that $d\geq 3$ and $n\geq d^3$. Let $G_0$ be any $\nu$-min 
graph in $\mathcal{R}_d(n)$. By Remark~\ref{remark:counting}, it follows  that $G_0$ is $\tau$-max in $\mathcal{R}_d(n)$. To prove the first part of the statement it suffices to prove that for each positive integer $p$ the graph $G_0(p)$ is $\tau$-max in $\mathcal{R}(G_0(p))$. We will employ induction in the positive integer $p$. First, assume that some graph $G$ is $\tau$-max in the class $\mathcal{R}(G_0(1))$. 
As $G$ is a $d$-regular graph on $n+d+1$ vertices and $n+d+1\geq n\geq d^3$,  Lemma~\ref{lemma:component2} gives that $K_{d+1}$ is a component of $G$. But then, $G=H\cup K_{d+1}$. As $G$ is $\tau$-max and the number of triangles is additive over components, $\tau(G)=\tau(H)+\tau(K_{d+1})$, 
thus the graph $H$ must be $\tau$-max in $\mathcal{R}_d(n)$. As both graphs $H$ and $G_0$ are $\tau$-max in $\mathcal{R}_d(n)$ we get that 
$\tau(H)=\tau(G_0)$. Additionally,  
$\tau(G_0(1)) = \tau(G_0 \cup K_{d+1}) = \tau(G_0)+\tau(K_{d+1})=\tau(H)+\tau(K_{d+1})=\tau(G)$, thus $G_0(1)$ is $\tau$-max in $\mathcal{R}(G_0(1))$ and the base step is proved. 
The inductive step follows by repeating the same argument. Indeed, let $p \geq 1$ and assume that $G_0(p)$ is $\tau$-max in $\mathcal{R}(G_0(p))$. Let $G$ be a $\tau$-max graph in $\mathcal{R}(G_0(p+1))$. As $G$ is $d$-regular on $n+(p+1)(d+1)$ vertices and $n+(p+1)(d+1) \geq n \geq d^3$, Lemma~\ref{lemma:component2} implies that $K_{d+1}$ is a component of $G$. Thus, $G = H \cup K_{d+1}$, where $H \in \mathcal{R}(G_0(p))$. As $G$ is $\tau$-max, the graph $H$ must be $\tau$-max in $\mathcal{R}(G_0(p))$ as well, hence $\tau(H)=\tau(G_0(p))$. Therefore,
\[
\tau(G) = \tau(H)+\tau(K_{d+1}) = \tau(G_0(p))+\tau(K_{d+1}) = \tau(G_0(p+1)),
\]
which proves that $G_0(p+1)$ is $\tau$-max.

Now, let us prove the second part of the statement. By Remark~\ref{remark:counting} it suffices to prove that if some graph $G'$ is $\tau$-max in $\mathcal{R}(G_0(p))$ then there exists $G_0'$ in $\mathcal{R}_d(n)$ such that $G'=G_0'(p)$. Observe that for every $i$ 
in $\{0,\ldots,p\}$ it holds that the graph 
$G_0(i)$ has $n+i(d+1)\geq n \geq d^3$ 
vertices. Repeated application of Lemma~\ref{lemma:component2} gives that $G'=pK_{d+1}\cup H$ for some $\tau$-max graph $H$ in $\mathcal{R}_d(n)$. The proposition follows.
\end{proof}

Now, let us present an enhanced methodology to construct $t$-optimal graphs. 
First, let us find bounds for $t(\overline{G})$ that are sharper than the one given in Lemma~\ref{lemma:PetingiRodriguez}.

\begin{lemma}\label{lemma:improvedbound}
Let $G$ be any graph in $\mathcal{S}_{n,m}$ and let $c$ be any positive integer. For all $x\geq n$,
\begin{equation}\label{equation:polynomial}
P_G(x) \leq x^n e^{-\sum_{k=1}^{c}\frac{g_k(G)}{kx^k}} f(d(G),x).  
\end{equation}
Additionally, $t(\overline{G}) \leq n^{n-2}e^{-\sum_{k=1}^{c}\frac{g_k(G)}{kn^k}}f(d(G),n)$, where $f$ is defined in expression~\eqref{eq:f}. 
\end{lemma}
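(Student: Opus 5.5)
We follow the derivation of~\eqref{ineq:chain}, but keep the first $c$ gap terms instead of only $g_3$. Fix $x\geq n$. By Lemma~\ref{lemma:basics}\ref{prop3} every Laplacian eigenvalue of $G$ lies in $[0,n]$. If $x=n$ and $n$ is an eigenvalue, then $P_G(n)=0$ and~\eqref{equation:polynomial} holds trivially, because the right-hand side is nonnegative. So we may assume $x$ exceeds every eigenvalue. Then $P_G(x)>0$ and the expansion~\eqref{eq:nodrop} is valid:
\[
-\log\left(\frac{P_G(x)}{x^n}\right)=\sum_{k=1}^{\infty}\frac{\ell_k(G)}{kx^k}.
\]

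Next we write $\ell_k(G)=\sum_{i=1}^n d_i(d_i+1)^{k-1}+g_k(G)$. By Lemma~\ref{lemma:gaps}, $g_k(G)\geq 0$ for every $k$. Dropping the terms $g_k(G)/(kx^k)$ with $k>c$ therefore only decreases the series, and we obtain
\[
-\log\left(\frac{P_G(x)}{x^n}\right)\geq \sum_{k=1}^{\infty}\frac{\sum_{i=1}^{n}d_i(d_i+1)^{k-1}}{kx^k}+\sum_{k=1}^{c}\frac{g_k(G)}{kx^k}.
\]
The first series is evaluated exactly as in~\eqref{ineq:chain} and equals $-\log f(d(G),x)$. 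This step uses $\sum_k y^k/k=-\log(1-y)$ with $y=(d_i+1)/x\leq 1$, since $d_i+1\leq n\leq x$. When $y=1$, that is $d_i=n-1$ and $x=n$, the series diverges and $f=0$. In that boundary case the inequality forces $P_G(n)=0$, which is consistent, so it is handled by the same convention. Exponentiating and rearranging gives~\eqref{equation:polynomial}.

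For the bound on $t(\overline{G})$, we apply Lemma~\ref{lemma:t} to $\overline{G}$, whose complement is $G$: $t(\overline{G})=\frac{1}{n^2}P_G(n)$. Substituting $x=n$ into~\eqref{equation:polynomial} yields $t(\overline{G})\leq n^{n-2}e^{-\sum_{k=1}^{c}g_k(G)/(kn^k)}f(d(G),n)$.

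The only delicate step is the boundary case $x=n$, where an eigenvalue may equal $n$ or some vertex may have degree $n-1$. Both situations make the relevant side vanish, so they must be disposed of separately before taking logarithms. Everything else is the nonnegativity of the gaps from Lemma~\ref{lemma:gaps} combined with the computation already carried out in~\eqref{ineq:chain}.
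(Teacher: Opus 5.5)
Your proposal is correct and follows the paper's proof. Like the paper, you expand via \eqref{eq:nodrop}, write $\ell_k(G)=\sum_i d_i(d_i+1)^{k-1}+g_k(G)$, drop the nonnegative gaps with $k>c$, resum the degree part to $-\log f(d(G),x)$, exponentiate, and specialize to $x=n$ with Lemma~\ref{lemma:t}. Your separate treatment of $x=n$ is a real improvement, since the paper's derivation of \eqref{eq:nodrop} tacitly assumes $\overline{G}$ is connected; the case $d_i=n-1\geq 1$ is in fact vacuous once $P_G(n)>0$, because such a vertex is isolated in $\overline{G}$ and parts~\ref{prop2} and~\ref{prop5} of Lemma~\ref{lemma:basics} then make $n$ a Laplacian eigenvalue of $G$, so calling it ``consistent'' slightly undersells what happens.
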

\begin{proof}
Let $G$ be any graph $\mathcal{S}_{n,m}$ and let $c$ be any positive integer. By expression~\eqref{eq:nodrop} and Remark~\ref{remark:key}, for all $x\geq n$, 
\begin{align*}
-\log\left(\frac{P_G(x)}{x^n}\right) &=\sum_{k=1}^{\infty}\frac{\ell_k(G)}{kx^k} \geq 
\sum_{k=1}^{\infty}\frac{\sum_{i=1}^{n}d_i(d_i+1)^{k-1}}{kx^k} + \sum_{k=1}^{c}\frac{g_k(G)}{kx^k} \\
&= -\log\left(e^{-\sum_{k=1}^{c}\frac{g_k(G)}{kx^k}}\prod_{i=1}^{n}\left(1-\frac{d_i+1}{x} \right)^{\frac{d_i}{d_i+1}} \right),
\end{align*}
where the last equality follows from~\eqref{ineq:chain}. 
Solving for $P_G(x)$ yields Equation~\eqref{equation:polynomial}. 
The second part of the statement follows from Lemma~\ref{lemma:t}.
\end{proof}

Define $\mathcal{S}_{n,m}^{(1)}$ as $\mathcal{S}_{n,m}$. For each positive integer $k$, define $\mathcal{S}_{n,m}^{(k+1)}$ as follows,
\begin{equation*}
\mathcal{S}_{n,m}^{(k+1)} = \{G: G\in \mathcal{S}_{n,m}^{(k)}, \,\,\ell_{k+1}(G)\leq \ell_{k+1}(H) \text{ for each } H \text{ in }  \mathcal{S}_{n,m}^{(k)}\}.    
\end{equation*}
Recall that $\ell_2(G)=\sum_{i=1}^{n}d_i(d_i+1)$. Then, $\ell_2(G)$ is minimized among all graphs $G$ in $\mathcal{S}_{n,m}$ precisely when $G$ is almost-regular thus 
$\mathcal{S}_{n,m}^{(2)}=\mathcal{A}_{n,m}$. As 
$\ell_3(G)=\sum_{i=1}^{n}d_i(d_i+1)^2 + 2\nu(G)$, the expression $\ell_3(G)$ is minimized among almost-regular graphs precisely when $G$ is $\nu$-min. As a consequence, $\mathcal{S}_{n,m}^{(3)}$ is the set consisting of all $\nu$-min graphs in $\mathcal{A}_{n,m}$.  
In Proposition~\ref{proposition:generalize} we will generalize Proposition~\ref{proposition:PetingiRodriguez}. First, let us prove two technical lemmas.
\begin{lemma}\label{lemma:gmh}
Let $G_0$ be an almost-regular graph whose vertices have degree $d-1$ or $d$. Let 
$\lambda_1,\lambda_2,\ldots,\lambda_n$ be the eigenvalues of \pr{$L(G_0)$}. Let $p$ and $q$ be nonnegative integers. Define the function $g(x)$ as follows,
\begin{equation}\label{eq:check}
g(x) = \left(1-\frac{d+1}{x}\right)^{pd}\left( 1-\frac{d}{x}\right)^{q(d-1)}\prod_{i=1}^{n}\left(1-\frac{\lambda_i}{x}\right).    
\end{equation}    
Let $n'$ be the number of vertices in $G_0(p,q)$, i.e., $n'=n(G_0(p,q))$. Then, the following assertions hold.
\begin{enumerate}[label=(\roman*)]
\item\label{gmh2} $t(\overline{G_0(p,q)}) = (n')^{n'-2}g(n')$.
\item\label{gmh3} The gap sequences of $G_0(p,q)$ and $G_0$ are identical.
\end{enumerate}
\end{lemma}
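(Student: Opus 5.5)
Both assertions follow from the fact that the Laplacian spectrum and the Laplacian traces are additive over disjoint unions. Throughout, $d(G_0(p,q))$ denotes the degree sequence of $G_0(p,q)$, obtained from $d(G_0)$ by appending $p(d+1)$ entries equal to $d$ and $qd$ entries equal to $d-1$.

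For \ref{gmh2}, observe that $L(G_0(p,q))$ is block diagonal, with one block $L(G_0)$, $p$ blocks $L(K_{d+1})$ and $q$ blocks $L(K_d)$. Hence
\[
P_{G_0(p,q)}(x)=P_{G_0}(x)\,P_{K_{d+1}}(x)^p\,P_{K_d}(x)^q .
\]
Remark~\ref{remark:complete} gives $P_{K_{d+1}}(x)=x(x-d-1)^{d}$ and $P_{K_d}(x)=x(x-d)^{d-1}$. Also $P_{G_0}(x)=\prod_{i=1}^n(x-\lambda_i)$. Therefore
\[
P_{G_0(p,q)}(x)=x^{n'}g(x),
\]
since $n'=n+p(d+1)+qd$ and the powers of $x$ balance: $n'=n+pd+p+q(d-1)+q$. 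Lemma~\ref{lemma:t}, applied to the graph $\overline{G_0(p,q)}$ on $n'$ vertices, gives
\[
t(\overline{G_0(p,q)})=\frac{1}{(n')^2}P_{G_0(p,q)}(n')=(n')^{n'-2}g(n').
\]

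For \ref{gmh3}, trace additivity over the block decomposition gives, for each $k$,
\[
\ell_k(G_0(p,q))=\ell_k(G_0)+p\,\ell_k(K_{d+1})+q\,\ell_k(K_d).
\]
The subtracted term $\sum_i d_i(d_i+1)^{k-1}$ in the definition of $g_k$ is also additive over components. So
\[
g_k(G_0(p,q))=g_k(G_0)+p\,g_k(K_{d+1})+q\,g_k(K_d).
\]
By Lemma~\ref{lemma:gaps}, complete graphs attain equality for every $k$, so $g_k(K_{d+1})=g_k(K_d)=0$. One can also check this directly from Remark~\ref{remark:complete}: $\ell_k(K_{s})=(s-1)s^k=\sum_{i=1}^{s}(s-1)s^{k-1}$. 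Hence $g_k(G_0(p,q))=g_k(G_0)$ for all $k$.

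There is no real obstacle here. The only care needed is the exponent bookkeeping when matching the $x$-powers in \ref{gmh2}, and noting that the degree-dependent sum in $g_k$ is taken over the vertices of each component separately.
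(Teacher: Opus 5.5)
Your proof is correct and follows the paper's argument essentially verbatim: factor $P_{G_0(p,q)}$ over components using Remark~\ref{remark:complete} to obtain $x^{n'}g(x)$ and apply Lemma~\ref{lemma:t}, then use additivity of $g_k$ over components together with Lemma~\ref{lemma:gaps} to get $g_k(K_{d+1})=g_k(K_d)=0$. Your phrasing that Lemma~\ref{lemma:t} is applied to $\overline{G_0(p,q)}$ is the precise one; the paper's text says it is applied to $G_0(p,q)$.
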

\begin{proof}
By Remark~\ref{remark:complete}, it follows that $P_{K_{d+1}}(x)=x(x-(d+1))^d$ and $P_{K_{d}}(x)=x(x-d)^{d-1}$. Recall that $G_0(p,q)=G_0\cup pK_{d+1}\cup qK_d$. As the Laplacian polynomial factors over components, we obtain that  
\begin{align*}
P_{G_0(p,q)}(x) &= P_{K_{d+1}}(x)^p P_{K_d}(x)^q P_{G_0}(x) 
= (x(x-(d+1))^d)^p (x(x-d)^{d-1})^q \prod_{i=1}^{n}(x-\lambda_i)\\ 
&= x^{p+q}(x-(d+1))^{pd}(x-d)^{q(d-1)}\prod_{i=1}^{n}(x-\lambda_i) = x^{p+q}x^{pd}x^{q(d-1)}\pr{x^n}g(x)=x^{n'}g(x),
\end{align*}
where we used the definition of $g(x)$ and the fact that 
$n'=n(G)=n+p(d+1)+qd$ in the last steps. 
Consequently, $P_{G_0(p,q)}(x)=x^{n'}g(x)$. Applying Lemma~\ref{lemma:t} to the graph $G_0(p,q)$ gives that 
$t(\overline{G_0(p,q)})=(n')^{-2}P_{G_0(p,q)}(n')=(n')^{n'-2}g(n')$ thus proving~\ref{gmh2}. Finally, let $k$ be any positive integer. As $g_k$ is additive over components, 
\begin{equation*}
g_k(G_0(p,q))=g_k(G_0\cup pK_{d+1}\cup qK_d)=
g_k(G_0)+pg_k(K_{d+1})+qg_k(K_d)= g_k(G_0),    
\end{equation*}
where we used Lemma~\ref{lemma:gaps} in the last equality 
which states that the gap sequence of any complete graph is $0$ thus $g_k(K_d)=g_k(K_{d+1})=0$. 
Consequently, $g_k(G_0(p,q))=g_k(G_0)$ thus proving~\ref{gmh3}. The lemma follows.
\end{proof}

\begin{lemma}\label{lemma:boundgap}
Let $G_0$ be in $\mathcal{A}_{n,m}$ whose vertices have degree $d-1$ or $d$. 
For each positive integer $k$, $g_k(G_0)\leq n (2d)^{k}$.
\end{lemma}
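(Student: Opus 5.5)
The plan is to bound $g_k(G_0)$ by $\ell_k(G_0)$ and then bound $\ell_k(G_0)$ spectrally, in the same way as Remark~\ref{remark:cobertura}. By definition,
\[
g_k(G_0)=\ell_k(G_0)-\sum_{i=1}^{n}d_i(d_i+1)^{k-1}.
\]
Each subtracted term $d_i(d_i+1)^{k-1}$ is nonnegative because degrees are nonnegative. Dropping them gives $g_k(G_0)\leq \ell_k(G_0)=\mathrm{tr}(L(G_0)^k)$.

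Next, write $\ell_k(G_0)=\sum_{i=1}^{n}\lambda_i^k$, where $\lambda_1,\ldots,\lambda_n$ are the eigenvalues of $L(G_0)$. This identity holds because $L(G_0)$ is symmetric (Lemma~\ref{lemma:basics}\ref{prop1}) and therefore diagonalizable, so $L(G_0)^k$ has eigenvalues $\lambda_i^k$. Since every vertex of $G_0$ has degree $d-1$ or $d$, the maximum degree satisfies $\Delta(G_0)\leq d$. Lemma~\ref{lemma:basics}\ref{prop4} then places every eigenvalue in $[0,2d]$, so $0\leq\lambda_i^k\leq (2d)^k$. Summing over the $n$ eigenvalues yields $\ell_k(G_0)\leq n(2d)^k$, and hence $g_k(G_0)\leq n(2d)^k$.

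No step is a real obstacle. The only points to check are the sign of the dropped sum and the use of $\Delta(G_0)\leq d$ rather than $\Delta(G_0)=d$, which covers the degenerate case where all degrees equal $d-1$. The statement is not sharp, but this crude bound is all that is needed later, since it feeds a geometric-series estimate like the one in Remark~\ref{remark:cobertura}.
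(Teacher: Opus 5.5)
Your proposal is correct and matches the paper's proof: drop the nonnegative sum $\sum_{i=1}^{n} d_i(d_i+1)^{k-1}$, then use Lemma~\ref{lemma:basics}\ref{prop4} to place every Laplacian eigenvalue in $[0,2d]$, which gives $\ell_k(G_0)\leq n(2d)^k$. Your extra remarks on diagonalizability and on using $\Delta(G_0)\leq d$ make explicit points the paper leaves implicit.
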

\begin{proof}
Let $k$ and $G_0$ be as in the statement. Let $\lambda_1,\lambda_2,\ldots,\lambda_n$ be the eigenvalues of $L(G_0)$. Let $i\in \{1,2,\ldots,n\}$. By Lemma~\ref{lemma:basics}\ref{prop4}, we  know that $\lambda_i$ lies in the interval $[0,2d]$. On the one hand, $\ell_k(G_0)=\sum_{i=1}^{n}\lambda_i^k\leq n(2d)^k$. On the other hand, 
$\sum_{i=1}^{n}d_i(d_i+1)^{k-1}\geq 0$. Consequently, 
$g_k(G_0)=\ell_k(G_0)-\sum_{i=1}^{n}d_i(d_i+1)^{k-1} \leq n(2d)^k$, 
and the lemma follows.
\end{proof}

\begin{proposition}\label{proposition:generalize}
Let $c$ be any positive integer. Let $G_0$ be an almost-regular graph whose vertices have degree $d-1$ or $d$.
Assume that for each pair of nonnegative integers $p$ and $q$ it holds that $G_0(p,q)\in \mathcal{S}(G_0(p,q))^{(c+1)}$. Then, for any graph $H$ in $\mathcal{S}(G_0(p,q))$ not in $\mathcal{S}(G_0(p,q))^{(c+1)}$ with at least $2d+n(G_0)(2d)^{c+2}$ vertices it holds that
$t(\overline{G_0(p,q)})>t(\overline{H})$.
\end{proposition}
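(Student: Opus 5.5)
Write $G=G_0(p,q)$, $N=n(G)$, and let $H\in\mathcal{S}(G)\setminus\mathcal{S}(G)^{(c+1)}$ with $N\ge 2d+n(G_0)(2d)^{c+2}$. The plan is to compare the exact value $t(\overline{G})=N^{N-2}g(N)$ from Lemma~\ref{lemma:gmh}\ref{gmh2} with the upper bound for $t(\overline{H})$ supplied by Lemma~\ref{lemma:improvedbound}. We first dispose of the case $H\notin\mathcal{A}(G)$. Here Proposition~\ref{proposition:PetingiRodriguez-almostregular} together with Remark~\ref{remark:cobertura} applies directly, because $N\ge 2d+n(G_0)(2d)^{c+2}\ge 2d+n(G_0)(2d)^3$ when $c\ge 1$. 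So from now on we assume $H$ is almost-regular with the same vertex and edge counts as $G$. Then $H$ has the same degree sequence as $G$ up to order, and hence $f(d(H),x)=f(d(G),x)$.

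Since $H\in\mathcal{S}(G)^{(2)}=\mathcal{A}(G)$ but $H\notin\mathcal{S}(G)^{(c+1)}$, there is a least index $j$ with $2\le j\le c$ such that $H\in\mathcal{S}(G)^{(j)}$ and $H\notin\mathcal{S}(G)^{(j+1)}$. Because $G\in\mathcal{S}(G)^{(c+1)}$, the two graphs agree on $\ell_k$ for $k\le j$. Moreover $\ell_{j+1}(H)>\ell_{j+1}(G)$, and since $\ell_{j+1}(H)-\ell_{j+1}(G)=g_{j+1}(H)-g_{j+1}(G)$ is an integer, $g_{j+1}(H)\ge g_{j+1}(G)+1$. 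I expect the gaps $g_k$ with $k\le j$ to be equal as well (shared degree sequence), and I would avoid needing the higher gaps of $H$ at all.

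Next we apply Lemma~\ref{lemma:improvedbound} to $H$ with $c$ replaced by $j+1$, at $x=N$. Using $g_k(H)\ge 0$ from Lemma~\ref{lemma:gaps} and $g_k(H)=g_k(G)$ for $k\le j$, this gives
\[
t(\overline{H})\le N^{N-2}f(d(G),N)\exp\Bigl(-\sum_{k=1}^{j+1}\frac{g_k(G)}{kN^k}-\frac{1}{(j+1)N^{j+1}}\Bigr).
\]
For $G$ we use the exact identity from \eqref{eq:nodrop}:
\[
g(N)=f(d(G),N)\exp\Bigl(-\sum_{k\ge1}\frac{g_k(G)}{kN^k}\Bigr),
\]
which holds because all eigenvalues are below $N$, and where $g_k(G)=g_k(G_0)$ by Lemma~\ref{lemma:gmh}\ref{gmh3}. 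Dividing, it suffices to prove
\[
\sum_{k\ge j+2}\frac{g_k(G_0)}{kN^k}<\frac{1}{(j+1)N^{j+1}}.
\]

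This tail estimate is the main technical step, and Lemma~\ref{lemma:boundgap} handles it. Multiplying by $N^{j+1}$, the left-hand side is at most
\[
\sum_{i\ge1}\frac{n(G_0)(2d)^{j+1+i}}{(j+1+i)N^{i}}\le \frac{n(G_0)(2d)^{j+2}}{(j+2)(N-2d)},
\]
after summing the geometric series. Requiring this to be below $1/(j+1)$ amounts to $N-2d>n(G_0)(2d)^{j+2}(j+1)/(j+2)$. That holds because $N-2d\ge n(G_0)(2d)^{c+2}\ge n(G_0)(2d)^{j+2}$, using $j\le c$ and $2d\ge1$. This gives the strict inequality $t(\overline{G})>t(\overline{H})$.

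The delicate points are two. First, the tail starts at index $j+2$ and may involve $g_k(H)$ for large $k$; this is harmless, since those terms are nonnegative and we simply drop them for $H$. Second, the constant must be tracked so that the threshold uses the exponent $c+2$, and the computation above confirms this.
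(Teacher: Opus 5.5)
Your proof is correct and follows the paper's strategy: you compare the exact value $t(\overline{G_0(p,q)})=N^{N-2}g(N)$ with the upper bound of Lemma~\ref{lemma:improvedbound} for $t(\overline{H})$, and use Lemma~\ref{lemma:gmh} and Lemma~\ref{lemma:boundgap} with a geometric series to control the tail of the gap series. The differences are only refinements of presentation. You evaluate the logarithm of the ratio, $\frac{g_{j+1}(H)-g_{j+1}(G_0)}{(j+1)N^{j+1}}-\sum_{k\ge j+2}\frac{g_k(G_0)}{kN^k}$, directly at $x=N$, whereas the paper bounds its derivative and then invokes $w(x)\to 1$. You also make the stopping index $j\le c$ explicit where the paper uses a ``without loss of generality'' reduction.
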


\begin{proof}
Let $c$ and $G_0$ be as in the statement. If $c=1$, the result follows from Proposition~\ref{proposition:PetingiRodriguez-almostregular} and Remark~\ref{remark:cobertura}. Assume $c\geq 2$. Let $r$ and $s$ be the number of vertices in $G_0$ with degrees $d-1$ and $d$, respectively, and define
\begin{equation*}
h(x,y) = e^{-\sum_{k=1}^c\frac{g_k(G_0)}{kx^k}-\frac{y}{(c+1)x^{c+1}}}\left(1-\frac{d+1}{x}\right)^{(s+p(d+1))d/(d+1)}\left(1-\frac{d}{x} \right)^{(r+qd)(d-1)/d}.
\end{equation*}

We claim that if $g'>g_{c+1}(G_0)$  then $
h(x,g')<g(x)$ for all $x\geq 2d+n(G_0)(2d)^{c+2}$.\\

We first prove that the claim implies the proposition. In fact, if the claim holds then let $H\in \mathcal{S}(G_0(p,q))-\mathcal{S}(G_0(p,q))^{(c+1)}$. Without loss of generality, we may assume that $H\in \mathcal{S}(G_0(p,q))^{(c)}-\mathcal{S}(G_0(p,q))^{(c+1)}$. Let $k\in \{1,\ldots,c\}$. Lemma~\ref{lemma:gmh}\ref{gmh3} gives that  $g_k(G_0(p,q))=g_k(G_0)$. Since $c\geq 2$, the graph $H$ is almost-regular and hence $d(H)=d(G_0(p,q))$. Therefore,
\[
g_k(H)-g_k(G_0(p,q))=\ell_k(H)-\ell_k(G_0(p,q))=\ell_k(H)-\ell_k(G_0).
\]
As $H\in \mathcal{S}(G_0(p,q))^{(c)}$, we have $\ell_k(H)=\ell_k(G_0)$, and thus $g_k(H)=g_k(G_0)$.

Let $n'=n(G_0(p,q))$ and $g'=g_{c+1}(H)$. As $H\notin \mathcal{S}(G_0(p,q))^{(c+1)}$, we have $\ell_{c+1}(H)>\ell_{c+1}(G_0)$, and hence $g'>g_{c+1}(G_0)$. On the one hand, Lemma~\ref{lemma:improvedbound} gives that
\begin{equation}\label{eq:one}
t(\overline{H})\leq (n')^{n'-2}h(n',g').    
\end{equation}
On the other hand, Lemma~\ref{lemma:gmh}\ref{gmh2} gives that
\begin{equation}\label{eq:other}
t(\overline{G_0(p,q)})=(n')^{n'-2}g(n').    
\end{equation}
If we assume that the claim holds, then 
$h(x,g')<g(x)$ when $x\geq 2d+n(G_0)(2d)^{c+2}$, and by expressions~\eqref{eq:one} and \eqref{eq:other} we get that 
\[
t(\overline{H}) \leq (n')^{n'-2}h(n',g') < (n')^{n'-2} g(n') = t(\overline{G_0(p,q)}),
\]
which proves the proposition.

It remains to prove the inequality $h(x,g')<g(x)$ whenever $x\geq 2d+n(G_0)(2d)^{c+2}$. 
As $x>2d$, both $g(x)$ and $h(x,g')$ are positive. Define $w(x)=g(x)/h(x,g')$. Then
\begin{align*}
\log w(x) &= \sum_{i=1}^{n}\log\left(1-\frac{\lambda_i}{x}\right)
-\frac{s d}{d+1}\log\left(1-\frac{d+1}{x}\right)
-\frac{r(d-1)}{d}\log\left(1-\frac{d}{x}\right) 
+ \sum_{i=1}^{c}\frac{g_i(G_0)}{i x^i}+
 \frac{g'}{(c+1)x^{c+1}}.
  \end{align*}

Now, taking the derivative of $\log(w(x))$ with respect to $x$ and using the convergence of geometric series followed by the definition of the gap sequence of $G_0$, we obtain that
\begin{align*}
\log(w(x))' &= \sum_{i=1}^{n}\frac{\lambda_i}{x^2(1-\lambda_i/x)}-\frac{sd}{x^2}\frac{1}{1-\frac{d+1}{x}}-\frac{r(d-1)}{x^2}\frac{1}{1-\frac{d}{x}}-\sum_{i=1}^{c}\frac{g_i(G_0)}{x^{i+1}}-\frac{g'}{x^{c+2}}\\
&= \frac{1}{x^2} \left(\sum_{j=0}^{\infty}\frac{(\sum_{i=1}^{n}\lambda_i^{j+1})-sd(d+1)^j-r(d-1)d^j}{x^j}   -\sum_{j=1}^{c}\frac{g_j(G_0)}{x^{j-1}}-\frac{g'}{x^c}\right)\\
&=\frac{1}{x^2} \left(\sum_{j=0}^{\infty}\frac{\ell_{j+1}(G_0)-\sum_{i=1}^{n}d_i(G_0)(d_i(G_0)+1)^{j}}{x^j} -\sum_{j=0}^{c-1}\frac{g_{j+1}(G_0)}{x^{j}}-\frac{g'}{x^c}\right)\\
&= \sum_{j=0}^{\infty}\frac{g_{j+1}(G_0)}{x^{j+2}}- \sum_{j=0}^{c-1}\frac{g_{j+1}(G_0)}{x^{j+2}}-\frac{g'}{x^{c+2}}=\frac{g_{c+1}(G_0)-g'}{x^{c+2}}+\sum_{k=c+3}^{\infty}\frac{g_{k-1}(G_0)}{x^k}\\
&\leq  \frac{g_{c+1}(G_0)-g'}{x^{c+2}}+\sum_{k=c+3}^{\infty}\frac{n(G_0)(2d)^{k-1}}{x^k}\\
&=\frac{g_{c+1}(G_0)-g'}{x^{c+2}} + \frac{n(G_0)}{2d}\frac{(2d)^{c+3}}{x^{c+3}}\frac{1}{1-(2dx^{-1})}\\ 
&\leq \frac{1}{x^{c+2}}\left(\frac{n(G_0)(2d)^{c+2}}{x-2d}-1\right),
\end{align*}
where the first inequality uses Lemma~\ref{lemma:boundgap} and the second inequality uses that $g'-g_{c+1}(G_0)\geq 1$ since both $g'$ and $g_{c+1}(G_0)$ are integers and $g'>g_{c+1}(G_0)$. Hence $
\log (w(x))'<0$ when $x\geq 2d+n(G_0)(2d)^{c+2}$. Since $w(x)\to 1$ as $x\to\infty$, it follows that $w(x)>1$ when $x\geq 2d+n(G_0)(2d)^{c+2}$, and therefore $h(x,g')<g(x)$. This completes the proof.
\end{proof}

Proposition~\ref{proposition:PetingiRodriguez} establishes that the complement of a graph $G_0(p,q)$ with a large number of vertices is $t$-optimal only if $G_0(p,q)$ is an almost-regular $\nu$-min graph. Proposition~\ref{proposition:generalize} sharpens this result by showing that, within the same class, the complement of $G_0(p,q)$ is lexicographically minimal with respect to the Laplacian sequence. This observation motivates the introduction of the following concept, which is analogous to that of trace-minimal graphs.

\begin{definition}
A graph $G$ in $\mathcal{R}_{d}(n)$ is \emph{$\mathcal{L}$-trace-minimal} if for each $H$ in $\mathcal{R}_{d}(n)$ it holds that $(\ell_i(G)) \preceq (\ell_i(H))$. 
\end{definition}

In Lemma~\ref{lemma:t-unique} we show how to construct $t$-optimal graphs finding $\mathcal{L}$-trace minimal graphs. 
The following remark will be used.
\begin{remark}\label{remark:ltracetriangle}
Each $\mathcal{L}$-trace minimal graph 
in $\mathcal{R}_d(n)$ is $\tau$-max. 
In fact, by its definition we know that 
if $G$ is \pr{a} $\mathcal{L}$-trace minimal graph then $\ell_3(G)$ is minimum, hence 
$G$ is $\nu$-min or $\tau$-max by Remark~\ref{remark:counting}.
\end{remark}

\begin{lemma}\label{lemma:t-unique}
Let $n$ and $d$ be integers such that $d\geq 3$ and $n\geq d^3$, and let $G_0$ be in $\mathcal{R}_d(n)$. If $G_0(p)$ is an $\mathcal{L}$-trace-minimal graph in $\mathcal{R}(G_0(p))$ for each nonnegative integer $p$ then there exists $n_0$ such that each graph $\overline{G_0(p)}$ on at least $n_0$ vertices is $t$-optimal.  
\end{lemma}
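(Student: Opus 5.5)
The plan is to compare $\overline{G_0(p)}$ against every competitor $\overline{H}$ with $H\in\mathcal{S}(G_0(p))$. Write $n'=n+p(d+1)$ for the number of vertices of $G_0(p)$, and let $m$ be its number of edges. Since $G_0(p)$ is $d$-regular, the almost-regular graphs in $\mathcal{S}_{n',m}$ are exactly the $d$-regular ones, so $\mathcal{A}(G_0(p))=\mathcal{R}(G_0(p))$. There are three cases.

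\emph{Case 1: $H$ is not almost-regular.} Apply Proposition~\ref{proposition:PetingiRodriguez-almostregular} with $q=0$, using the threshold of Remark~\ref{remark:cobertura}. This gives $t(\overline{H})<t(\overline{G_0(p)})$ as soon as $n'\geq 2d+n(2d)^3$.

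\emph{Case 2: $H\in\mathcal{R}(G_0(p))$ and $\ell_k(H)=\ell_k(G_0(p))$ for all $k$.} The power sums of the Laplacian eigenvalues determine the Laplacian spectrum, hence $P_H=P_{G_0(p)}$. By Lemma~\ref{lemma:basics}\ref{prop2} the complements then have equal Laplacian polynomials as well. Lemma~\ref{lemma:t} (applied to $\overline{H}$ and $\overline{G_0(p)}$) then gives $t(\overline{H})=t(\overline{G_0(p)})$.

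\emph{Case 3: $H\in\mathcal{R}(G_0(p))$ and the Laplacian sequences differ.} Let $c+1$ be the first index at which they differ. By $\mathcal{L}$-trace-minimality of $G_0(p)$ we have $\ell_{c+1}(H)>\ell_{c+1}(G_0(p))$, so $H\in\mathcal{S}(G_0(p))^{(c)}\setminus\mathcal{S}(G_0(p))^{(c+1)}$, while $G_0(p)\in\mathcal{S}(G_0(p))^{(c+1)}$. Since $\ell_1$ and $\ell_2$ depend only on the degree sequence, $c\geq 2$.

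In this case I would not invoke Proposition~\ref{proposition:generalize} verbatim, because its hypothesis is stated for all pairs $(p,q)$ and ours only covers $q=0$. Instead I would rerun its proof with $q=0$ and this fixed $p$; the argument only ever uses the single graph $G_0(p,0)$ being compared. That is, I would use Lemma~\ref{lemma:improvedbound} for $H$ and Lemma~\ref{lemma:gmh} for $G_0(p)$. The gaps satisfy $g_k(H)=g_k(G_0)$ for $k\leq c$ and $g_{c+1}(H)>g_{c+1}(G_0)$, since degree sequences agree. The derivative estimate on $\log w$ then yields $t(\overline{H})<t(\overline{G_0(p)})$ whenever $n'\geq 2d+n(2d)^{c+2}$.

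The main obstacle is that this threshold depends on $c$, and $c$ could a priori grow with $p$. To bound $c$ uniformly, note that when $c\geq 3$ we have $\ell_3(H)=\ell_3(G_0(p))$, which is minimum. So $H$ is $\nu$-min, i.e.\ $\tau$-max by Remark~\ref{remark:counting}. By Proposition~\ref{proposition:conj} (which needs $n\geq d^3$, $d\geq 3$; for $p=0$ put $H_0=H$) we get $H=H_0(p)$ for some $H_0\in\mathcal{R}_d(n)$. Since traces are additive over components, $\ell_k(H)-\ell_k(G_0(p))=\ell_k(H_0)-\ell_k(G_0)$ for every $k$. The first disagreement of the sequences is therefore the first index where the Laplacian power sums of $H_0$ and $G_0$ differ. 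Two multisets of $n$ reals with equal first $n$ power sums coincide (Newton's identities), so non-identical spectra on $n$ vertices must differ at some index $\leq n$. This gives $c+1\leq n$, independently of $p$.

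Hence $c\leq n-1$ in every instance of Case~3. The choice
\[
n_0 = 2d+n(2d)^{n+1}
\]
covers all the thresholds $2d+n(2d)^{c+2}$ for $c\leq n-1$, and also Case~1, since $n\geq 2$. For $n'\geq n_0$ every competitor satisfies $t(\overline{H})\leq t(\overline{G_0(p)})$, so $\overline{G_0(p)}$ is $t$-optimal.
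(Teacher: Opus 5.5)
Your proposal is correct and follows the paper's proof essentially step for step. You use the same tools in the same roles: Proposition~\ref{proposition:PetingiRodriguez-almostregular} with Remark~\ref{remark:cobertura} for non-regular $H$, equal Laplacian spectra for identical sequences, and the argument of Proposition~\ref{proposition:generalize} at the first differing index $c+1$, with $c\le n-1$ obtained from Proposition~\ref{proposition:conj} and the fact that $n$ power sums determine the spectrum. Your merging of the paper's cases (ii) and (iii), your observation that Proposition~\ref{proposition:generalize} need only be applied to the single pair $(p,0)$ (which is all its proof uses), and your slightly smaller $n_0=2d+n(2d)^{n+1}$ are all valid minor refinements.
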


\begin{proof}
Let $n$ and $d$ be integers such that $d\geq 3$ and $n\geq d^3$. Let $G_0$ be any graph in $\mathcal{R}_d(n)$ such that $G_0(p)$ is an $\mathcal{L}$-trace-minimal graph for each nonnegative integer $p$. Define $n_0=2d+n(2d)^{n+2}$. It suffices to prove that 
$\overline{G_0(p)}$ is $t$-optimal when $n(G_0(p))\geq n_0$. In fact, let $p$ be any integer such that $n(G_0(p))\geq n_0$. Let $H$ be any graph in $\mathcal{S}(G_0(p))$. The graph $H$ satisfies precisely one of the following conditions: (i) $H$ is not $d$-regular; (ii) $H$ is $d$-regular but not $\tau$-max; 
(iii) $H$ is $d$-regular $\tau$-max but not $\mathcal{L}$-trace-minimal; or (iv) $H$ is 
$d$-regular $\tau$-max and $\mathcal{L}$-trace-minimal. If condition (iv) is satisfied then by Remark~\ref{remark:ltracetriangle} the graph $H$ is just $\mathcal{L}$-trace minimal. 
Let us prove that $t(\overline{G_0(p)})\geq t(\overline{H})$ in each of the cases separately.
\begin{enumerate}[label=(\roman*)]
\item[(i)] If $H$ is not $d$-regular then observe that $n_0 \geq 2d+n(2d)^{3}$. 
By Remark~\ref{remark:cobertura} we know that $n_0$ satisfies the conditions of Proposition~\ref{proposition:PetingiRodriguez-almostregular}. By Proposition~\ref{proposition:PetingiRodriguez-almostregular},  
$t(\overline{G_0(p)})>t(\overline{H})$. 
\item[(ii)] If $H$ is $d$-regular but not $\tau$-max, then 
by Remark~\ref{remark:counting} we know that $H$ 
is $d$-regular but not $\nu$-min thus $H$ is in $\mathcal{S}(G_0(p))^{(2)}-\mathcal{S}(G_0(p))^{(3)}$. Observe that the inequality $n_0\geq 2d+n(2d)^{4}$ is satisfied. Then, Proposition~\ref{proposition:generalize} gives that $t(\overline{G_0(p)})>t(\overline{H})$. 
\item[(iii)] If $H$ is $d$-regular $\tau$-max but it is not $\mathcal{L}$-trace-minimal then by Proposition~\ref{proposition:conj} it follows that $H=H_0(p)$. As both graphs $H_0$ and $G_0$ are $d$-regular, $\ell_k(H_0(p))-\ell_k(G_0(p))=\ell_k(H_0)-\ell_k(G_0)$ for each positive integer $k$. 
Observe that if $H_0$ were $\mathcal{L}$-trace-minimal then the graphs $G_0$ and $H_0$ would have identical Laplacian spectrum as well as $G_0(p)$ and $H_0(p)$, and $H_0(p)$ (i.e., $H$) would be an $\mathcal{L}$-trace-minimal graph in contradiction with our assumption. Consequently, there exists $c$ in $\{1,\ldots,n-1\}$ such that 
$H_0$ is in $\mathcal{S}(G_0)^{(c)}$ but not in $\mathcal{S}(G_0)^{(c+1)}$. 
This means that $\ell_{c+1}(H_0)>\ell_{c+1}(G_0)$, which implies that $\ell_{c+1}(H_0(p))>\ell_{c+1}(G_0(p))$ hence $H_0(p)$ is in $\mathcal{S}(G_0(p))^{(c)}$ but not in $\mathcal{S}(G_0(p))^{(c+1)}$. As $c<n$ we know that $n_0=2d+n(2d)^{n+2}>2d+n(2d)^{c+2}$. By Proposition~\ref{proposition:generalize} 
we conclude that $t(\overline{G_0(p)})>t(\overline{H_0(p)})=t(\overline{H})$.
\item[(iv)] If $H$ is $\mathcal{L}$-trace-minimal as well as $G_0(p)$ then its complements $\overline{H}$ and $G_0(p)$ 
share the same Laplacian spectrum and $t(\overline{G_0(p)})=t(\overline{H})$. 
\end{enumerate} 
In either case, $t(\overline{G_0(p)})\geq t(\overline{H})$. 
As $H$ is an arbitrary graph in $\mathcal{S}(G_0(p))$, we conclude that 
$\overline{G_0(p)}$ is $t$-optimal in $\mathcal{S}(\overline{G_0(p)})$ when $n(G_0(p))\geq n_0$, and the lemma follows.  
\end{proof}
To apply Lemma~\ref{lemma:t-unique} we need to find $\mathcal{L}$-trace minimal graphs. As it will turn out, the key is a strong relationship between $\mathcal{L}$-trace-minimal and trace-minimal graphs. 
In fact, we will prove that a graph $G$ is $\mathcal{L}$-trace-minimal in $\mathcal{R}_d(n)$ if and only if $\overline{G}$ is trace-minimal in $\mathcal{R}_{n-1-d}(n)$. As a consequence, all the mathematical properties of trace-minimal graphs will be useful to find $\mathcal{L}$-trace minimal graph and, by Lemma~\ref{lemma:t-unique}, new $t$-optimal regular graphs. We will use Lemma~\ref{lemma:algebra} whose proof includes basic concepts of linear algebra. 
\begin{lemma}\label{lemma:algebra}
For each $G$ in $\mathcal{R}_d(n)$ and each pair of positive integers $i$ and $j$ it holds that $\mathrm{tr}(L(G)^i((d+1)I_n-J_n)^j)=(d+1)^j\ell_i(G)$.
\end{lemma}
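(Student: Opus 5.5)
The plan is to show that $L(G)$ and $J_n$ commute and that $L(G)J_n=0$. Then $((d+1)I_n-J_n)^j$ can be expanded binomially, and every term containing $J_n$ disappears once it is multiplied by $L(G)^i$ with $i\ge 1$.

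The first step is $L(G)J_n=J_nL(G)=0_n$. Each row of $L(G)=D(G)-A(G)$ sums to $d_k-d_k=0$. So $L(G)\mathbf{1}=0$, where $\mathbf{1}$ is the all-ones vector, and therefore $L(G)J_n=L(G)\mathbf{1}\mathbf{1}^{T}=0_n$. By symmetry of $L(G)$ (Lemma~\ref{lemma:basics}\ref{prop1}) we also get $J_nL(G)=(L(G)J_n)^{T}=0_n$. This identity holds for any graph; $d$-regularity enters only later, through the constant $d+1$ in the statement. Consequently $L(G)^iJ_n^k=0_n$ whenever $i\ge1$ and $k\ge1$.

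The second step expands the power. Since $I_n$ and $J_n$ commute, the binomial theorem gives
\[
((d+1)I_n-J_n)^j=(d+1)^jI_n+\sum_{k=1}^{j}\binom{j}{k}(d+1)^{j-k}(-1)^kJ_n^k .
\]
Multiplying on the left by $L(G)^i$ kills every summand with $k\ge1$, so $L(G)^i((d+1)I_n-J_n)^j=(d+1)^jL(G)^i$. Taking traces yields $(d+1)^j\,\mathrm{tr}(L(G)^i)=(d+1)^j\ell_i(G)$.

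As a cross-check, one can take a common orthonormal eigenbasis containing $\mathbf{1}/\sqrt n$. On $\mathbf{1}$ the eigenvalue of $L(G)^i$ is $0$, and on $\mathbf{1}^{\perp}$ the matrix $J_n$ acts as $0$. The identity therefore also follows spectrally.

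There is no real obstacle here. The only point needing care is that $i\ge1$ is required for $L(G)^iJ_n=0_n$. The statement assumes $i$ is a positive integer, so this holds.
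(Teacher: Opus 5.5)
Your proof is correct and takes essentially the same route as the paper. Both use $L(G)J_n=0_n$ (from the zero row sums) together with the binomial expansion of $((d+1)I_n-J_n)^j$, so that only the $(d+1)^jI_n$ term survives. The only cosmetic difference is that you kill the $J_n$-terms at the matrix level via $L(G)^iJ_n^k=0_n$, whereas the paper first rewrites $J_n^k=n^{k-1}J_n$ and then uses $\mathrm{tr}(L(G)^iJ_n)=0$.
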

\begin{proof}
Let $i$ and $j$ be any positive integers. Let us find $\mathrm{tr}(L(G)^i((d+1)I_n-J_n)^j)$. 
As the sum of each row in $L(G)$ equals $0$ we get that $L(G)J_n$ is the all-zeroes matrix $0_n$ and $\mathrm{tr}(L(G)^iJ_n)=\mathrm{tr}(L(G)^{i-1}0_n)=0$.
As $J_n^2=nJ_n$, by induction it follows that $J_n^j=n^{j-1}J_n$ for any positive integer $j$. By the linearity of the trace operator, 
\begin{align*}
\mathrm{tr}(L(G)^i((d+1)I_n-J_n)^j) &= 
\mathrm{tr}(L(G)^i \sum_{k=0}^{j}\binom{j}{k}((d+1)I_n)^k(-J_n)^{j-k})\\
&=\sum_{k=0}^{j-1}\binom{j}{k}(d+1)^k(-1)^{j-k}n^{j-k-1}\mathrm{tr}(L(G)^iJ_n)+ \mathrm{tr}((L(G)^i (d+1)^jI_n))\\
&= \mathrm{tr}((L(G)^i (d+1)^jI_n))=(d+1)^j\ell_i(G). \qedhere
\end{align*}
\end{proof}

\begin{lemma}\label{lemma:lminimal}
A graph $G$ is $\mathcal{L}$-trace-minimal in $\mathcal{R}_d(n)$ if and only if $\overline{G}$ is trace-minimal in $\mathcal{R}_{n-1-d}(n)$.    
\end{lemma}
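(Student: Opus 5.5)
Fix $G\in\mathcal{R}_d(n)$ and write $e=n-1-d$, so $\overline{G}\in\mathcal{R}_e(n)$. Complementation is a bijection between $\mathcal{R}_d(n)$ and $\mathcal{R}_e(n)$. The plan is therefore to show that the two lexicographic orders agree under this bijection: the Laplacian sequence $(\ell_i(G))$ and the adjacency sequence $(a_i(\overline{G}))$ should determine each other by a \emph{triangular} linear transformation with positive diagonal coefficients, whose entries depend only on $n$ and $d$. Once this holds, comparing two graphs $G,H\in\mathcal{R}_d(n)$ lexicographically via $\ell$ is the same as comparing $\overline{G},\overline{H}$ lexicographically via $a$, and the lemma follows.

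The key identity is $A(\overline{G})=J_n-I_n-A(G)=J_n-I_n-(dI_n-L(G))=L(G)-((d+1)I_n-J_n)$. Put $M=(d+1)I_n-J_n$. Then $L(G)$ and $M$ commute, since $L(G)J_n=J_nL(G)=0_n$ by regularity (row and column sums vanish). Hence the binomial theorem applies:
\begin{equation*}
a_k(\overline{G})=\mathrm{tr}((L(G)-M)^k)=\sum_{i=0}^{k}\binom{k}{i}(-1)^{k-i}\,\mathrm{tr}(L(G)^iM^{k-i}).
\end{equation*}
Lemma~\ref{lemma:algebra} evaluates the terms with $i\ge1$ and $k-i\ge1$ as $(d+1)^{k-i}\ell_i(G)$. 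The term with $i=k$ is $\ell_k(G)$, which is consistent with $j=0$. The term with $i=0$ is $\mathrm{tr}(M^k)$, a constant depending only on $n,d,k$; it can be computed from the eigenvalues of $M$ ($d+1$ with multiplicity $n-1$, and $d+1-n$), but we never need its value. This gives
\begin{equation*}
a_k(\overline{G})=\ell_k(G)+\sum_{i=1}^{k-1}\binom{k}{i}(-1)^{k-i}(d+1)^{k-i}\ell_i(G)+C_{n,d,k},
\end{equation*}
a unitriangular relation.

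The order comparison then follows by induction on the index. Take $G,H\in\mathcal{R}_d(n)$ with $\ell_i(G)=\ell_i(H)$ for all $i<j$. The formula gives $a_i(\overline{G})=a_i(\overline{H})$ for $i<j$ and
\begin{equation*}
a_j(\overline{G})-a_j(\overline{H})=\ell_j(G)-\ell_j(H).
\end{equation*}
The same argument runs in reverse, since equality of the first $j-1$ terms of the $a$-sequences forces, by induction on the triangular system, equality of the first $j-1$ terms of the $\ell$-sequences. Consequently $(\ell_i(G))\preceq(\ell_i(H))$ if and only if $(a_i(\overline{G}))\preceq(a_i(\overline{H}))$. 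Combined with the bijectivity of complementation, $G$ is $\mathcal{L}$-trace-minimal in $\mathcal{R}_d(n)$ if and only if $\overline{G}$ is trace-minimal in $\mathcal{R}_{n-1-d}(n)$.

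The main point needing care is the binomial expansion. It requires that $L(G)$ and $M$ commute, which uses regularity. It also requires treating the $i=0$ and $j=0$ boundary terms correctly, since Lemma~\ref{lemma:algebra} is stated only for positive $i,j$. Everything else is bookkeeping.
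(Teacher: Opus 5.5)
Your proof is correct and follows the same route as the paper. Both write $A(\overline{G})=L(G)-((d+1)I_n-J_n)$, expand binomially, evaluate the mixed traces with Lemma~\ref{lemma:algebra}, and observe that $(\ell_i)$ and $(a_i)$ first disagree at the same index with the same sign; your handling of the $i=0$ term as a constant that cancels, and your explicit converse via the unitriangular system and the complementation bijection, are more careful than the paper's write-up, though $L(G)J_n=J_nL(G)=0_n$ holds for every graph, so regularity is needed for $A(G)=dI_n-L(G)$, not for the commutation.
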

\begin{proof}
We will prove the direct since a proof of the converse is analogous. Let $G$ be any $\mathcal{L}$-trace-minimal graph in $\mathcal{R}_d(n)$. As $G$ is $\mathcal{L}$-trace-minimal, for each $H$ in $\mathcal{R}_d(n)$ precisely one of the following two conditions holds:
\begin{enumerate}[label=(\roman*)]
\item\label{c1} For each positive integer $k$ it holds that $\ell_k(G)=\ell_k(H)$ (i.e., the sequences are identical), or 
\item\label{c2} There exists a positive integer $j$ such that for each $k\in \{1,2,\ldots,j-1\}$ it holds that $\ell_k(G)=\ell_k(H)$ but $\ell_j(G)<\ell_j(H)$.
\end{enumerate}
As $\overline{H}$ is any graph in $\mathcal{R}_{n-1-d}(n)$, it suffices to prove that $(a_i(\overline{G}))\preceq (a_i(\overline{H}))$. Observe that 
$A(\overline{G})+A(G)=J_n-I_n$. As $G$ is in $\mathcal{R}_d(n)$ we know that $L(G)=dI_n-A(G)$. Consequently, 
$A(\overline{G})=J_n-I_n-A(G) =L(G)-((d+1)I_n-J_n)$. Then,
\begin{align*}
a_i(\overline{G})&=\mathrm{tr}(A(\overline{G})^i) = \mathrm{tr}((L(G)-((d+1)I_n-J_n))^i)\\ 
&= \sum_{k=0}^{i}(-1)^{i-k}\binom{i}{k}\mathrm{tr}(L(G)^k ((d+1)I_n-J_n)^{i-k}) =\sum_{k=0}^{i}(-1)^{i-k}\binom{i}{k}(d+1)^{i-k}\ell_k(G),
\end{align*}
where the last step holds by Lemma~\ref{lemma:algebra}. Similarly, as $H$ is in 
$\mathcal{R}_d(n)$ we also know that $A(\overline{H})=L(H)-((d+1)I_n-J_n)$, and the same calculation for $a_i(\overline{H})$ gives that
\begin{equation*}
a_i(\overline{H}) = \sum_{k=0}^{i}(-1)^{i-k}\binom{i}{k}(d+1)^{i-k}\ell_k(H).
\end{equation*}

As a consequence, for every positive integer $i$ we obtain that 
\begin{equation}\label{eq:comparethem}
a_i(\overline{G})-a_i(\overline{H}) = 
\sum_{k=0}^{i}(-1)^{i-k}\binom{i}{k}(d+1)^{i-k}(\ell_k(G)-\ell_k(H)).
\end{equation}

If condition~\ref{c1} holds then the sequences $\ell_k(G)$ and $\ell_k(H)$ are identical. Then, by  Equation~\eqref{eq:comparethem}, the sequences $a_i(\overline{G})$ and $a_i(\overline{H})$ are also identical. Finally, if condition~\ref{c2} holds then 
there exists a positive integer $j$ such that for each $k\in \{1,2,\ldots,j-1\}$ it holds that 
$\ell_k(G)=\ell_k(H)$ but $\ell_j(G)<\ell_j(H)$. In this case Equation~\eqref{eq:comparethem} gives that 
$a_i(\overline{G})=a_i(\overline{H})$ when  
$i\in \{1,2,\ldots,j-1\}$ but $a_j(\overline{G})<a_j(\overline{H})$. In either case, $(a_i(\overline{G})) \preceq (a_i(\overline{H}))$ and $\overline{G}$ is trace-minimal in $\mathcal{R}_{n-1-d}(n)$. The lemma follows.  
\end{proof}

Lemma~\ref{lemma:lminimal} establishes a correspondence between $\mathcal{L}$-trace-minimal graphs in $\mathcal{R}_d(n)$ and trace-minimal graphs in $\mathcal{R}_{n-1-d}(n)$. As a consequence, the problem of identifying $\mathcal{L}$-trace-minimal graphs can be reduced to the study of trace-minimal graphs, for which several structural results and constructions are already available in the literature. 

Our findings are summarized in Theorem~\ref{theorem:summary}.

\begin{theorem}\label{theorem:summary}
For each pair of integers $d$ and $n$ such that $d\geq 3$ and 
$n\geq d^3$ and each graph $G_0$ in $\mathcal{R}_d(n)$ whose complement is trace-minimal, there exists infinitely many nonnegative integers $p$ such that 
$\overline{G_0(p)}$ is both a trace-minimal and a $t$-optimal graph.
\end{theorem}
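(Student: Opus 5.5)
The plan is to show that $G_0(p)$ is $\mathcal{L}$-trace-minimal in $\mathcal{R}(G_0(p))$ for every nonnegative integer $p$, and then to read off both conclusions from Lemma~\ref{lemma:lminimal} and Lemma~\ref{lemma:t-unique}.

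First, since $\overline{G_0}$ is trace-minimal in $\mathcal{R}_{n-1-d}(n)$, Lemma~\ref{lemma:lminimal} gives that $G_0$ is $\mathcal{L}$-trace-minimal in $\mathcal{R}_d(n)$. By Remark~\ref{remark:ltracetriangle}, $G_0$ is also $\tau$-max, or equivalently $\nu$-min, in $\mathcal{R}_d(n)$. Here I use that for $d$-regular parameters the almost-regular class coincides with the regular one: an almost-regular graph with $nd/2$ edges on $n$ vertices has average degree $d$, so it is $d$-regular.

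Next, fix $p\geq 1$ and an arbitrary $H\in\mathcal{R}(G_0(p))$. We have $\ell_1(H)=\ell_1(G_0(p))$ and $\ell_2(H)=\ell_2(G_0(p))$, since both depend only on the common degree sequence. We then split into two cases.
\begin{enumerate}[label=(\alph*)]
\item If $H$ is not $\tau$-max, then $G_0(p)$, which is $\tau$-max by the first part of Proposition~\ref{proposition:conj}, has strictly more triangles. By Remark~\ref{remark:counting} and $\ell_3=\sum_i d_i(d_i+1)^2+2\nu$, this gives $\ell_3(G_0(p))<\ell_3(H)$, so $(\ell_i(G_0(p)))\preceq(\ell_i(H))$.
\item If $H$ is $\tau$-max, the converse part of Proposition~\ref{proposition:conj} (this is where $d\geq 3$ and $n\geq d^3$ are used) gives $H=H_0(p)$ for some $H_0\in\mathcal{R}_d(n)$. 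Because the Laplacian spectrum is the union of the spectra of the components, $\ell_k$ is additive over components. Hence
\[
\ell_k(H)-\ell_k(G_0(p))=\ell_k(H_0)-\ell_k(G_0)\quad\text{for all } k,
\]
and the $\mathcal{L}$-trace-minimality of $G_0$ transfers directly to $(\ell_i(G_0(p)))\preceq(\ell_i(H))$.
\end{enumerate}
Together these cases show that $G_0(p)$ is $\mathcal{L}$-trace-minimal for all $p\geq 0$.

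Now the hypotheses of Lemma~\ref{lemma:t-unique} hold, so there is $n_0$ (explicitly $2d+n(2d)^{n+2}$) such that $\overline{G_0(p)}$ is $t$-optimal whenever $n+p(d+1)\geq n_0$. This holds for all sufficiently large $p$, hence for infinitely many $p$. For those same $p$, applying Lemma~\ref{lemma:lminimal} to $G_0(p)$, a $d$-regular graph on $n+p(d+1)$ vertices, gives that $\overline{G_0(p)}$ is trace-minimal in $\mathcal{R}_{n+p(d+1)-1-d}(n+p(d+1))$. This proves the theorem.

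The main point requiring care is case (b): it relies on the converse of Proposition~\ref{proposition:conj}. That converse is phrased for $\nu$-min graphs in $\mathcal{R}(G_0(p))$, so I must check it applies to $H$. This follows from Remark~\ref{remark:counting} together with the equality of the almost-regular and regular classes noted above. The remaining steps are direct invocations of earlier results.
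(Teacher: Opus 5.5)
Your proof is correct. Its outer structure matches the paper's: show that $G_0(p)$ is $\mathcal{L}$-trace-minimal for every $p\geq 0$, then invoke Lemma~\ref{lemma:t-unique} for $t$-optimality and Lemma~\ref{lemma:lminimal} for trace-minimality. The two proofs differ in how this central step is done.

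The paper works on the complement side. It picks some $H_p$ in the class of $G_0(p)$ whose complement is trace-minimal. Applying the structural result of \'Abrego et al.\ (Lemma~\ref{lemma:corollary}) $p$ times, it writes $\overline{H_p}=E_{d+1}^{(p)}\wedge\overline{H_0}$ with $\overline{H_0}$ trace-minimal. It then transfers trace-minimality from $\overline{H_p}$ to $\overline{G_0(p)}$ by cospectrality: two trace-minimal graphs have the same trace sequence, hence the same spectrum, and this survives complementation of regular graphs and adding copies of $K_{d+1}$. Only then does it read off $\mathcal{L}$-trace-minimality of $G_0(p)$ from Lemma~\ref{lemma:lminimal}.

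You instead stay on the Laplacian side and compare $G_0(p)$ directly with every competitor $H$. You dispose of the non-$\tau$-max competitors via $\ell_3$. You decompose the $\tau$-max competitors with the paper's own Proposition~\ref{proposition:conj} (ultimately Lemma~\ref{lemma:component2}) and use additivity of $\ell_k$ over components. This is exactly the mechanism of case (iii) in the proof of Lemma~\ref{lemma:t-unique}.

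What your route buys:
\begin{itemize}
\item It is independent of the heavier structural Lemmas~\ref{lemma:test} and~\ref{lemma:corollary}; you only need the local Lemma~\ref{lemma:component}, through Lemma~\ref{lemma:component2}.
\item It avoids the cospectrality argument entirely.
\end{itemize}
The paper's route is shorter once Lemma~\ref{lemma:corollary} is available. It also needs only one trace-minimal representative instead of a case split over all competitors.

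Your check that the almost-regular and $d$-regular classes coincide is correct and genuinely needed, since that is what lets the $\nu$-min formulation of Proposition~\ref{proposition:conj} apply.
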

\begin{proof}
Let $d$ and $n$ be integers such that $d\geq 3$ and $n\geq d^3$, and let $G_0$ be any graph 
in $\mathcal{R}_d(n)$ whose complement is trace-minimal. Let $p$ be any nonnegative integer. By Lemma~\ref{lemma:t-unique}, it suffices to prove that $G_0(p)$ is $\mathcal{L}$-trace minimal. 
Let $H_p$ be any graph with as many vertices and edges as $G_0(p)$ whose complement is trace-minimal.  
As $H_p$ has at least $n$ vertices and $n\geq d^3$, a repeated application of Lemma~\ref{lemma:corollary} 
gives that $\overline{H_p} = E_{d+1}^{(p)} \wedge \overline{H_0}$, or equivalently, 
$H_p = H_0 \cup pK_{d+1}$, where $H_0 \in \mathcal{R}_d(n)$ and its complement is trace-minimal. 
As $\overline{G_0}$ and $\overline{H_0}$ are trace-minimal, both graphs have identical adjacency spectrum. Consequently, $\overline{G_0(p)}$ and $\overline{H_p}$ also have identical adjacency spectrum and $\overline{G_0(p)}$ is also trace-minimal. As $\overline{G_0(p)}$ is trace-minimal, Lemma~\ref{lemma:lminimal} gives that $G_0(p)$ is $\mathcal{L}$-trace-minimal. By Lemma~\ref{lemma:t-unique}, there exists  $p_0$ such that 
$\overline{G_0(p)}$ is $t$-optimal whenever $p\geq p_0$. The theorem is proved.
\end{proof}

We are now in a position to establish a two-step method to find new $t$-optimal graphs. 
First, pick integers $d$ and $n$ such that $d\geq 3$ and $n\geq d^3$. Then, find some graph $G$ in $\mathcal{R}_d(n)$ whose complement is trace-minimal. Theorem~\ref{theorem:summary} asserts that 
there exists a nonnegative integer $p_0$ such that $\overline{G\cup pK_{d+1}}$ is $t$-optimal (and trace-minimal) whenever $p\geq p_0$.

Our method avoids spectral comparisons as well as the determination of infinitely many almost-regular $\nu$-min graphs that appear in~\cite{Petingi-2002}. Moreover, it allows us to rely on existing tools for the construction of trace-minimal graphs available in the literature.

\section{Construction of regular t-optimal graphs}\label{section:toptimal}
A generous list of trace-minimal graphs can be found in~\cite{Abrego-2006}. Among them, we can find all trace-minimal graphs in each nonempty class $\mathcal{R}_{n-1-d}(n)$ such that $d\in \{1,\ldots,5\}$. By Theorem~\ref{theorem:summary}, such classes of trace-minimal graphs are also $t$-optimal when the number of vertices is large enough. Petingi and Rodríguez~\cite{Petingi-2002} found the only $t$-optimal graph in 
$\mathcal{R}_{n-1-d}(n)$ when $d\in \{0,1,2,3\}$ and $n$ is large enough. 
We found the only $t$-optimal graph in $\mathcal{R}_{n-1-d}(n)$ when $d=4$ and $n$ is large enough~\cite{RomeroLAGOS1-2025}. Independently, Gong \textit{et~al.}~\cite{Gong-2026} found the only $t$-optimal graph in $\mathcal{R}_{n-1-d}(n)$ when $d \in \{4,5\}$ and $n$ is large enough. All the aforementioned results now follow from Theorem~\ref{theorem:summary} after revisiting the trace-minimal graphs determined by Ábrego \textit{et~al.}~\cite{Abrego-2006}. In Lemma~\ref{lemma:6} and Lemma~\ref{lemma:7} we find the only trace-minimal graph in each graph class $\mathcal{R}_{n-1-d}(n)$ when $d=6$ and $d=7$, respectively. 

\begin{lemma}\label{lemma:6}
Let $G$ be any trace-minimal graph in $\mathcal{R}_{n-7}(n)$. Let $q$ and $r$ be the only integers such that $r\in \{0,\ldots,6\}$ and $n=7q+r$. All the following assertions hold,
\begin{enumerate}[label=(\roman*)]
\item \label{a0} If $r=0$ then $G=E_{7}^{(q)}$;
\item \label{a1} If $r=1$ then $G=E_{7}^{(q-1)} \wedge (4K_2)$;
\item \label{a2} If $r=2$ then $G=E_{7}^{(q-1)} \wedge C_9$;
\item \label{a3} If $r=3$ then $G=E_{7}^{(q-3)} \wedge S(24,17)$;
\item \label{a4} If $r=4$ then $G=E_{7}^{(q-4)} \wedge S(32,25)$ ;
\item \label{a5} If $r=5$ then $G=E_{7}^{(q-3)} \wedge S(26,19)$;
\item \label{a6} If $r=6$ then $G=E_{7}^{(q-3)} \wedge S(27,20)$.
\end{enumerate}
\end{lemma}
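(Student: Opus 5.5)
The plan is to apply Lemma~\ref{lemma:test} with $d=6$ repeatedly, peeling off copies of $E_7$. We induct on $q$ for each fixed residue $r$. If $G$ is trace-minimal in $\mathcal{R}_{n-7}(n)$ and $n>\mu(6,r)$, Lemma~\ref{lemma:test} gives $G=E_7\wedge S(n-7,n-14)$. Here $S(n-7,n-14)$ is a trace-minimal graph in $\mathcal{R}_{(n-7)-7}(n-7)$, which has the same residue $r$. So by induction $G=E_7^{(j)}\wedge G_1$, where $G_1$ is trace-minimal on the largest $n_1\equiv r \pmod 7$ with $n_1\le \mu(6,r)$. If no such $n_1$ lies above the smallest admissible order, $G_1$ is the trace-minimal graph on $7+r$ vertices. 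The proof therefore reduces to two tasks: computing $\mu(6,r)=\frac{r}{4}(49-r^2)+\frac32\tau_r(7+r)$ for $r\in\{0,\ldots,6\}$, and identifying the trace-minimal graph at the resulting base order.

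To compute $\tau_r(7+r)$ I will use earlier results. For $r=0,1$ this is trivially $0$. For $r=2$ it is $0$, since $C_9$ is triangle-free. For $r=3$ it is $0$ by the Petersen graph, or by Lemma~\ref{lemma:trianglefree}. For $r=4$ it is $0$ because $S(11,4)$ is triangle-free (Lemma~\ref{lemma:cases}). For $r=5$ it is $0$ by Lemma~\ref{lemma:trianglefree} with $n=12$, using $K_{6,6}$ minus a perfect matching. For $r=6$ it equals $6$, since $S(13,6)$ has exactly six triangles and is trace-minimal, hence triangle-minimal by Remark~\ref{remark:triangles}. This yields
\[
\mu(6,0)=0,\quad \mu(6,1)=12,\quad \mu(6,2)=22.5,\quad \mu(6,3)=30,\quad \mu(6,4)=33,\quad \mu(6,5)=30,\quad \mu(6,6)=28.5 .
\]
Comparing with the orders $7q+r$ gives the base orders $7,\ 8,\ 16,\ 24,\ 32,\ 26,\ 27$ respectively. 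This matches the exponents $q,\ q-1,\ q-1,\ q-3,\ q-4,\ q-3,\ q-3$ in the statement.

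The base cases then need to be checked. For $r=0$ the only graph in $\mathcal{R}_0(7)$ is $E_7$, so $G=E_7^{(q)}$. For $r=1$ the only graph in $\mathcal{R}_1(8)$ is $4K_2$, and $15>12$, so peeling continues down to $8$. For $r=3,4,5,6$ the base graph is by definition any trace-minimal $S(n_1,n_1-7)$, so nothing more needs proving there; for these residues only the inequality $n_1+7>\mu(6,r)$ must be verified.

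The only genuinely nontrivial case is $r=2$, and I expect it to be the main obstacle. The base order $16$ lies below $\mu(6,2)=22.5$, so Lemma~\ref{lemma:test} cannot be used to peel from $16$ to $9$. Instead, I will identify the trace-minimal graph in $\mathcal{R}_9(16)=\mathcal{R}_{8+1}(2\cdot 8)$ directly via Proposition~\ref{proposition:saegusa} with $n=8$, which gives exactly $E_7\wedge C_9$. For $n\ge 23>22.5$, the peeling argument then gives $G=E_7^{(q-2)}\wedge(E_7\wedge C_9)=E_7^{(q-1)}\wedge C_9$, as claimed. A last point to confirm is that the join notation composes associatively, so that $E_7^{(j)}\wedge(E_7^{(k)}\wedge X)=E_7^{(j+k)}\wedge X$; this follows immediately from the definition of the join.
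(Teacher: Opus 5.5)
Your proposal is correct and follows essentially the same route as the paper. You compute $\tau_r(7+r)$ (zero for $r\le 5$ via triangle-free witnesses, $6$ for $r=6$ via $S(13,6)$), obtain the same values of $\mu(6,r)$, peel off copies of $E_7$ with Lemma~\ref{lemma:test} down to the base orders $7,8,16,24,32,26,27$, and settle the one case below the threshold (order $16$ for $r=2$) with Proposition~\ref{proposition:saegusa}, exactly as the paper does.
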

\begin{proof}
Let $G$ be any trace-minimal graph in $\mathcal{R}_{n-7}(n)$. Let $q$ and $r$ be the only integers such that $r\in \{0,\ldots,6\}$ and $n=7q+r$. First, let us find $\tau_r(d+1+r)$. If $d=6$ and $r\in \{1,3,5\}$ then $d+1+r$ is even and the inequality $2r\leq d+1+r$ holds. Then, Lemma~\ref{lemma:trianglefree} gives that  $\tau_r(d+1+r)=0$. If $d=6$ and $r\in \{0,2,4\}$ then $d+1+r \in \{7,9,11\}$. Observe that the edgeless graph $E_7$ and the cycle $C_9$ are triangle-free 
graphs in $\mathcal{R}_{0}(7)$ and $\mathcal{R}_{2}(9)$, respectively. 
Lemma~\ref{lemma:cases} gives that $S(11,4)$ is the only trace-minimal graph in $\mathcal{R}_4(11)$, which is triangle-free. 
By Remark~\ref{remark:triangles}, each trace-minimal graph is $\tau$-min. Therefore, in all previous cases, $\tau_r(d+1+r)=0$, and Equation~\eqref{eq:w} yields $\mu(6,r)=\frac{r}{4}(49-r^2)$. Finally, if $d=6$ and $r=6$ then 
Lemma~\ref{lemma:cases} gives that 
the only trace-minimal graph $S(13,6)$ in $\mathcal{R}_{6}(13)$ has precisely $6$ triangles (see Figure~\ref{fig:sporadic}). As trace-minimal graphs have the least number of triangles we conclude that $\tau_6(13)=6$, and Equation~\eqref{eq:w} yields $\mu(6,6)=\frac{6}{4}(49-6^2)+\frac{3}{2}6 = \frac{57}{2}$. 
Now, let us prove each assertion separately.
\begin{enumerate}[label=(\roman*)]
\item If $r=0$ then $\mu(d,r)=\mu(6,0)=0$. By repeated application of Lemma~\ref{lemma:test}, $G=E_{7}^{(q)}$.
\item If $r=1$ then $\mu(d,r)=\mu(6,1)=12$. By repeated application of Lemma~\ref{lemma:test}, $G=E_{7}^{(q-1)}\wedge S(8,1)$. As 
$\mathcal{R}_{1}(8)=\{4K_2\}$, $4K_2$ is trace-minimal and $S(8,1)=4K_2$.
\item If $r=2$ then $\mu(d,r)=\mu(6,2)=\frac{45}{2}$. By repeated application of Lemma~\ref{lemma:test}, $G=E_{7}^{(q-2)} \wedge S(16,9)$. Observe that $S(16,9)$ is a trace-minimal graph in $\mathcal{R}_{n+1}(2n)$ with $n=8$. By Proposition~\ref{proposition:saegusa} we get that $S(16,9)=E_7 \wedge C_9$ 
and $G=E_{7}^{(q-1)} \wedge C_9$.
\item If $r=3$ then $\mu(d,r)=\mu(6,3)=30$. By repeated application of Lemma~\ref{lemma:test}, $G=E_{7}^{(q-3)} \wedge S(24,17)$.
\item If $r=4$ then $\mu(d,r)=\mu(6,4)=33$. By repeated application of Lemma~\ref{lemma:test}, $G=E_{7}^{(q-4)} \wedge S(32,25)$.
\item If $r=5$ then $\mu(d,r)=\mu(6,5)=30$. 
By Lemma~\ref{lemma:test}, $G=E_{7}^{(q-3)} \wedge S(26,19)$.
\item If $r=6$ then $\mu(d,r)=\mu(6,6)=\frac{57}{2}$. By repeated application of Lemma~\ref{lemma:test}, $G=E_{7}^{(q-3)} \wedge S(27,20)$. \qedhere 
\end{enumerate}
\end{proof}

\begin{lemma}\label{lemma:7}
Let $G$ be any trace-minimal graph in $\mathcal{R}_{n-8}(n)$. Let $q$ and $r$ be the only integers such that $r\in \{0,2,4,6\}$ and $n=8q+r$. All the following assertions hold,
\begin{enumerate}[label=(\roman*)]
\item \label{b0} If $r=0$ then $G=E_{8}^{(q)}$;
\item \label{b2} If $r=2$ then $G=E_{8}^{(q-3)} \wedge S(26,18)$;
\item \label{b4} If $r=4$ then $G=E_{8}^{(q-5)} \wedge S(44,36)$ ;
\item \label{b6} If $r=6$ then $G=E_{8}^{(q-4)} \wedge S(38,30)$.
\end{enumerate}
Additionally, if Conjecture~\ref{conjecture:Saegusa} holds then 
$S(26,18)=E_8^{(2)} \wedge C_{10}$, $S(44,36)=E_8^{(4)} \wedge S(12,4)$ where $S(12,4)$ is given in Figure~\ref{fig:sporadic}, and $S(38,30)=E_8^{(3)} \wedge (K_{7,7}-7K_2)$.
\end{lemma}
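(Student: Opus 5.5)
We mirror the proof of Lemma~\ref{lemma:6}, now with $d=7$. Since $G\in\mathcal{R}_{n-8}(n)$, the product $n(n-8)$ must be even, so $n$ is even. Writing $n=8q+r$, this forces $r\in\{0,2,4,6\}$.

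The first step is to compute $\tau_r(8+r)$ for each admissible $r$. For $r=0$ the class is $\mathcal{R}_0(8)=\{E_8\}$, so $\tau_0(8)=0$. For $r\in\{2,4,6\}$ the order $8+r\in\{10,12,14\}$ is even and satisfies $2r\le 8+r$, since $r\le 8$. Lemma~\ref{lemma:trianglefree} therefore gives a triangle-free graph in $\mathcal{R}_r(8+r)$, and hence $\tau_r(8+r)=0$. Equation~\eqref{eq:w} then reduces to $\mu(7,r)=\frac{r}{4}(64-r^2)$, which gives
\[
\mu(7,0)=0,\qquad \mu(7,2)=30,\qquad \mu(7,4)=48,\qquad \mu(7,6)=42.
\]

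The second step applies Lemma~\ref{lemma:test} repeatedly. Set $n_j=n-8j$, which leaves the residue $r$ unchanged. For a trace-minimal graph in $\mathcal{R}_{n_j-8}(n_j)$, if $n_j>\mu(7,r)$ the lemma forces the form $E_8\wedge S(n_j-8,n_j-16)$, and the factor $S(n_j-8,n_j-16)$ is again trace-minimal in a class of the same type. We peel off copies of $E_8$ until the order first satisfies $n_j\le\mu(7,r)$. The stopping orders are the largest $n_j\equiv r \pmod 8$ with $n_j\le \mu$:
\begin{itemize}
\item for $r=0$, we peel all the way down, which gives $E_8^{(q)}$;
\item for $r=2$, the stopping order is $26\le 30$, and $n-26=8(q-3)$, which gives $E_8^{(q-3)}\wedge S(26,18)$;
\item for $r=4$, the stopping order is $44\le 48$, and $n-44=8(q-5)$, which gives $E_8^{(q-5)}\wedge S(44,36)$;
\item for $r=6$, the stopping order is $38\le 42$, and $n-38=8(q-4)$, which gives $E_8^{(q-4)}\wedge S(38,30)$.
\end{itemize}
This requires $n$ to be at least the stopping order; smaller $n$ are handled trivially or excluded by interpreting $E_8^{(0)}$ as empty.

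The main obstacle is the \emph{additional} statement, which rests on Conjecture~\ref{conjecture:Saegusa} and which we apply iteratively.
\begin{itemize}
\item $S(26,18)$ lies in $\mathcal{R}_{13+5}(26)$ with $k=5$, so it equals $E_8\wedge S(18,10)$. In turn $S(18,10)\in\mathcal{R}_{9+1}(18)$, and Proposition~\ref{proposition:saegusa} gives $S(18,10)=E_8\wedge C_{10}$. Hence $S(26,18)=E_8^{(2)}\wedge C_{10}$.
\item $S(38,30)=E_8\wedge S(30,22)$ with $k=11$. Continuing, $S(30,22)=E_8\wedge S(22,14)$ with $k=3$, and $S(22,14)=E_8\wedge S(14,6)$. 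Then $S(14,6)$ is $6$-regular on $14$ vertices, i.e.\ in $\mathcal{R}_{7-1}(14)$, so Lemma~\ref{lemma:bipartitem} gives $K_{7,7}-7K_2$. Hence $S(38,30)=E_8^{(3)}\wedge(K_{7,7}-7K_2)$.
\item $S(44,36)$ unwinds similarly through $S(36,28)$, $S(28,20)$ and $S(20,12)$, with $k=14,6,2$ respectively, reaching $S(12,4)$. Lemma~\ref{lemma:cases} identifies $S(12,4)$ as the unique trace-minimal graph in $\mathcal{R}_4(12)$. This gives $S(44,36)=E_8^{(4)}\wedge S(12,4)$.
\end{itemize}
The care needed is in checking that each $k$ lies in the conjecture's range $\{2,\dots,n-1\}$. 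The conjecture asserts that $E_{n-k}\wedge S(n+k,2k)$ is trace-minimal rather than the unique trace-minimal graph, so the identification holds up to trace-equivalence; it becomes an equality once uniqueness of the base graphs is invoked via Proposition~\ref{proposition:saegusa}, Lemma~\ref{lemma:bipartitem} and Lemma~\ref{lemma:cases}.
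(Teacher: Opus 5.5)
Your argument is the paper's proof, step for step. You show $\tau_r(8+r)=0$ via Lemma~\ref{lemma:trianglefree} (and you rightly treat $r=0$ separately, since that lemma needs $d\geq 1$), compute $\mu(7,r)\in\{0,30,48,42\}$, peel off copies of $E_8$ with Lemma~\ref{lemma:test} down to the orders $26,44,38$, and then iterate Conjecture~\ref{conjecture:Saegusa} down to the base cases from Proposition~\ref{proposition:saegusa}, Lemma~\ref{lemma:bipartitem} and Lemma~\ref{lemma:cases}. Your chain $44\to 36\to 28\to 20\to 12$ also correctly includes the $S(20,12)$ step that the paper's write-up skips.

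Two small fixes remain. First, the successive values of $k$ are $14,10,6,2$ for $S(44,36),S(36,28),S(28,20),S(20,12)$, and $11,7,3$ for $S(38,30),S(30,22),S(22,14)$; your labels are shifted. Second, uniqueness of the base graphs does not upgrade the top-level identification to uniqueness. Under the conjecture you only get that $E_8^{(2)}\wedge C_{10}$ (and likewise the other two graphs) is \emph{a} trace-minimal graph, with any other trace-minimal graph in that class merely cospectral to it. That is exactly what the $S(\cdot,\cdot)$ notation requires.
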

\begin{proof}
Let $G$ be any trace-minimal graph in $\mathcal{R}_{n-8}(n)$. By handshaking lemma, $n(n-8)$ is even thus $n$ is even. Let $q$ and $r$ be the only integers such that $r\in \{0,2,4,6\}$ and $n=8q+r$. First, let us find $\tau_r(d+1+r)$. As $d=7$ and $r\in \{0,2,4,6\}$, $d+1+r$ is even and the inequality $2r\leq d+1+r$ holds. 
Lemma~\ref{lemma:trianglefree} gives that $\tau_r(d+1+r)=0$, and Equation~\eqref{eq:w} yields $\mu(7,r)=\frac{r}{4}(64-r^2)$. Now, 
let us prove each assertion separately.
\begin{enumerate}[label=(\roman*)]
\item If $r=0$ then $\mu(d,r)=\mu(7,0)=0$. By Lemma~\ref{lemma:test}, $G=E_{8}^{(q)}$.
\item If $r=2$ then $\mu(d,r)=\mu(7,2)=30$. By Lemma~\ref{lemma:test}, $G=E_{8}^{(q-3)} \wedge S(26,18)$.
\item If $r=4$ then $\mu(d,r)=\mu(7,4)=48$. By Lemma~\ref{lemma:test}, $G=E_{8}^{(q-5)} \wedge S(44,36)$.
\item If $r=6$ then $\mu(d,r)=\mu(7,6)=42$. By Lemma~\ref{lemma:test}, $G=E_{8}^{(q-4)} \wedge S(38,30)$.
\end{enumerate}
Now, assume that Conjecture~\ref{conjecture:Saegusa} holds. 
As $S(26,18)$ is trace-minimal in $\mathcal{R}_{n+k}(2n)$ with $n=13$ and $k=5$ we would obtain that $S(26,18)=E_8 \wedge S(18,10)$. As $S(18,10)$ is trace-minimal in $\mathcal{R}_{n+1}(2n)$ with $n=5$, Proposition~\ref{proposition:saegusa} gives that $S(18,10)=E_8 \wedge C_{10}$. Therefore, if Conjecture~\ref{conjecture:Saegusa} holds then  $S(26,18)=E_8^{(2)} \wedge C_{10}$. 

As $S(44,36)$ is trace-minimal in $\mathcal{R}_{n+k}(2n)$ 
with $n=22$ and $k=14$, we would obtain that $S(44,36)=E_8 \wedge S(36,28)$. Similarly, $S(36,28)$ is a trace-minimal graph in $\mathcal{R}_{n+k}(2n)$ with $n=18$ and $k=10$ thus 
$S(36,28)=E_8 \wedge S(28,20)$. Again, as $S(28,20)$ is trace-minimal in $\mathcal{R}_{n+k}(2n)$ with $n=10$ and $k=2$, $S(28,20)=E_8 \wedge S(12,4)$. Therefore, if Conjecture~\ref{conjecture:Saegusa} holds then  $S(44,36) = E_8^{(4)} \wedge S(12,4)$.

Finally, $S(38,30)$ is trace-minimal in $\mathcal{R}_{n+k}(2n)$ 
with $n=19$ and $k=11$ thus $S(38,30)=E_8 \wedge S(30,22)$. Similarly, 
$S(30,22)$ is trace-minimal in $\mathcal{R}_{n+k}(2n)$ with $n=15$ and $k=7$ thus $S(30,22)=E_8 \wedge S(22,14)$. Again, $S(22,14)$ is trace-minimal  
in $\mathcal{R}_{n+k}(2n)$ with $n=11$ and $k=3$ thus $S(22,14)=E_8 \wedge S(14,6)$. Finally, by Lemma~\ref{lemma:bipartitem} the only trace-minimal graph in $\mathcal{R}_{n-1}(2n)$ is $K_{n,n}-nK_2$ thus $S(14,6)=K_{7,7}-7K_2$. Therefore, if Conjecture~\ref{conjecture:Saegusa} holds then $S(38,30) = E_8^{(3)} \wedge (K_{7,7}-7K_2)$. The lemma follows. 
\end{proof}

By Lemma~\ref{lemma:cases}, the graph $S(12,4)$ is already known, and it is depicted in Figure~\ref{fig:sporadic}. 
As a consequence, if Conjecture~\ref{conjecture:Saegusa} holds then Lemma~\ref{lemma:7} gives a full list of all trace-minimal graphs whose complement are $7$-regular. Except for few trace-minimal graphs to be determined, Lemma~\ref{lemma:6} gives trace-minimal graphs whose complement are $6$-regular. We close with an immediate consequence of Theorem~\ref{theorem:summary}.

\begin{corollary}
Each of the trace-minimal graphs $G$ defined in Lemma~\ref{lemma:6} or in Lemma~\ref{lemma:7} are $t$-optimal when the number of vertices is large enough 
(i.e., when the conditions of Theorem~\ref{theorem:summary} are satisfied).    
\end{corollary}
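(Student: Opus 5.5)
The corollary follows by checking, for each graph $G$ listed in Lemma~\ref{lemma:6} and Lemma~\ref{lemma:7}, the hypotheses of Theorem~\ref{theorem:summary}. Each such $G$ is trace-minimal in $\mathcal{R}_{n-1-d}(n)$ with $d=6$ or $d=7$, so $d\geq 3$ holds. The listed form $G=E_{d+1}^{(q')}\wedge S$ gives, after complementation, $\overline{G}=\overline{S}\cup q'K_{d+1}$. Put $G_0=\overline{S}$, a $d$-regular graph on $n(S)$ vertices. Since $S$ is trace-minimal, $\overline{G_0}=S$ is trace-minimal.

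The first step is to secure the size condition $n(G_0)\geq d^3$. For that I would not use the small core $S$ directly. Instead I take $G_0$ to be the complement of $E_{d+1}^{(j)}\wedge S$ for $j$ large enough that its order is at least $d^3$. I then need that this join is still trace-minimal in its own class. That follows from Lemma~\ref{lemma:test} and Lemma~\ref{lemma:corollary}: every trace-minimal graph in that class, once its order exceeds $\mu(d,r)$ (in particular once it exceeds $d^3$), has the form $E_{d+1}\wedge S(\cdot,\cdot)$. Iterating this down to the core, the trace-minimal graphs in the class are exactly joins of copies of $E_{d+1}$ with a trace-minimal core. All such cores share the same adjacency sequence, and so do the joins built from them. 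Hence $E_{d+1}^{(j)}\wedge S$ is trace-minimal.

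With $d\geq3$, $n(G_0)\geq d^3$ and $\overline{G_0}$ trace-minimal, I apply Theorem~\ref{theorem:summary}. Inspecting its proof, via Lemma~\ref{lemma:t-unique}, it actually gives more than infinitely many $p$: $\overline{G_0(p)}$ is $t$-optimal for every $p\geq p_0$. Since $\overline{G_0(p)}=E_{d+1}^{(p)}\wedge\overline{G_0}=E_{d+1}^{(p+j)}\wedge S$, this says the graph $G$ of Lemma~\ref{lemma:6} or~\ref{lemma:7} is $t$-optimal whenever $q'$, and hence $n$, is large.

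The main obstacle is the bookkeeping in the first step: showing that the trace-minimal property is preserved when copies of $E_{d+1}$ are joined on. The statement hedges with ``when the conditions of Theorem~\ref{theorem:summary} are satisfied,'' so I expect it is enough to invoke the theorem once the large-$n$ structure from Lemma~\ref{lemma:corollary} is in place.
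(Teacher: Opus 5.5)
Your proposal is correct and takes the same route as the paper, which states the corollary as an immediate consequence of Theorem~\ref{theorem:summary}. You also supply the bookkeeping the paper leaves implicit, and those steps are sound: inflating the small core by copies of $K_{d+1}$ so that $n(G_0)\geq d^3$, using Lemma~\ref{lemma:test} and cospectrality to show the inflated join stays trace-minimal, and reading off from Lemma~\ref{lemma:t-unique} that $t$-optimality holds for every $p\geq p_0$ rather than only for infinitely many $p$.
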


\section*{Acknowledgments}
This work is partially supported by City University of New York project entitled \emph{On the problem of characterizing graphs with maximum number of spanning trees}, grant number 66165-00. The first author wants to acknowledge Professors Martín D. Safe and Guillermo Durán for their valuable guidance and support.

\end{document}